\pdfoutput=1
\documentclass[11pt,a4paper]{amsart}

\usepackage{comment}
\usepackage[english]{babel} %

\usepackage{amscd,amsmath,amssymb,amsthm} %

\usepackage[T1]{fontenc}
\usepackage{lmodern}

\usepackage[margin=1in]{geometry}

\usepackage{mathtools}
\usepackage{bm} %

\usepackage{slashed} %
\usepackage[babel=true]{microtype}
\usepackage[autostyle=true]{csquotes} %
\usepackage[dvipsnames]{xcolor}
\usepackage[biblatex=true]{embrac}

\usepackage[%
bookmarks=true,
colorlinks,
linkcolor=blue,
urlcolor=blue,
citecolor=blue,
plainpages=false,
pdfpagelabels,
final,
breaklinks=true,
pdfusetitle,
]{hyperref}
\usepackage[citestyle=numeric-comp,
            bibstyle=numeric-comp,
            backend=biber,
            url=false,
            doi=true,
            isbn=false,
            giveninits=true,
            maxbibnames=100,
            sorting=nyt]{biblatex}

\AtEveryBibitem{\clearfield{month}}
\AtEveryBibitem{\clearfield{day}}
\AtEveryBibitem{%
  \ifentrytype{thesis}
    {}
    {
      \ifentrytype{online}{}
      {
      \clearfield{url}%
      \clearfield{urldate}%
      }
    }%
}
            
\addbibresource{literature.bib}

\usepackage{tikz}
\usetikzlibrary{cd} %

\usepackage[shortlabels]{enumitem}
\newlist{myenumi}{enumerate}{1}
\setlist[myenumi,1]{label=\upshape(\roman*)}
\newlist{myenuma}{enumerate}{1}
\setlist[myenuma,1]{label=\upshape(\alph*)}
\usepackage{color}
\usepackage[textwidth=0.8in,disable]{todonotes}
\setlength{\marginparwidth}{0.8in}
\usepackage{thmtools}
\declaretheorem[name=Theorem, numberwithin=section]{thm}
\declaretheorem[name=Theorem, numbered=no]{thm*}
\declaretheorem[name=Lemma,numberlike=thm]{lem}
\declaretheorem[name=Lemma,numbered=no]{lem*}

\declaretheorem[name=Proposition,numberlike=thm]{prop}
\declaretheorem[name=Definition,numberlike=thm, style=definition]{defi}
\declaretheorem[name=Setup,numberlike=thm, style=definition]{setup}
\declaretheorem[name=Conjecture,numberlike=thm, style=remark]{conj}

\declaretheorem[name=Remark, numberlike=thm, style=remark]{rem}

\declaretheorem[name=Theorem]{thmx}

\numberwithin{equation}{section}
\allowdisplaybreaks[1]
\usepackage[noabbrev]{cleveref} %
\crefname{figure}{Figure}{Figures}
\crefname{table}{Table}{Tables}
\crefname{thm}{Theorem}{Theorems}
\crefname{thmx}{Theorem}{Theorems}
\crefname{lem}{Lemma}{Lemmas}
\crefname{defi}{Definition}{Definitions}
\crefname{setup}{Setup}{Setups}
\crefname{conj}{Conjecture}{Conjectures}
\crefname{quest}{Question}{Questions}
\crefname{cor}{Corollary}{Corollaries}
\crefname{corx}{Corollary}{Corollaries}
\crefname{prop}{Proposition}{Propositions}
\crefname{ex}{Example}{Examples}
\crefname{rem}{Remark}{Remarks}
\crefname{section}{Section}{Sections}
\crefname{subsection}{Subsection}{Subsections}
\crefname{chapter}{Chapter}{Chapters}
\crefname{appendix}{Appendix}{Appendices}
\crefdefaultlabelformat{#2\textup{#1}#3}
\creflabelformat{enumi}{(#2#1#3)}

\usepackage{crossreftools}
\pdfstringdefDisableCommands{%
    \let\Cref\crtCref
    \let\cref\crtcref
}

\usepackage{xparse}

\usepackage{ourmacros}

\title[DEC Shields]{Positive mass theorems for spin initial data sets with arbitrary ends and dominant energy shields}
\subjclass[2020]{53C21 (Primary) 53C24; 53C27 (Secondary)}

\hypersetup{
  pdfauthor={Simone Cecchini, Martin Lesourd, Rudolf Zeidler}
}

\author{Simone Cecchini}
\thanks{S.~Cecchini was partially funded by the Deutsche Forschungsgemeinschaft (DFG, German Research Foundation) – Project number 441895604.}
\address[S.~Cecchini]{Department of Mathematics, Texas A\&M University, College Station, TX
77843, United States of America}
\email{\href{mailto:cecchini@tamu.edu}{cecchini@tamu.edu}}
\urladdr{\href{https://simonececchini.org}{www.simonececchini.org}}

\author{Martin Lesourd}
\thanks{M. Lesourd thanks the Gordon Betty Moore and John Templeton Foundations as well as the Harvard's Black Hole Initiative.}
\address[M.~Lesourd]{Black Hole Initiative, Harvard University, Cambridge, MA 02138, United States of America}
\email{\href{mailto:mlesourd@math.harvard.edu}{mlesourd@math.harvard.edu}}
\urladdr{\href{https://www.martinlesourd.com}{www.martinlesourd.com}}

\author{Rudolf Zeidler}
\thanks{R.~Zeidler was funded by the Deutsche Forschungsgemeinschaft (DFG, German Research Foundation) – Project numbers
523079177; %
427320536; %
390685587; %
338480246. %
}
\address[R.~Zeidler]{Mathematisches Institut, University of Münster, Germany}
\email{\href{mailto:math@rzeidler.eu}{math@rzeidler.eu}}
\urladdr{\href{https://www.rzeidler.eu}{www.rzeidler.eu}}

\begin{document}

\begin{abstract}
  We prove a positive mass theorem for spin initial data sets $(M,g,k)$ that contain an asymptotically flat end and a shield of dominant energy (a subset of $M$ on which the dominant energy scalar $\mu-|J|$ has a positive lower bound).
  In a similar vein, we show that for an asymptotically flat end $\mathcal{E}$ that violates the positive mass theorem (i.e. $\mathrm{E} < |\mathrm{P}|$), there exists a constant $R>0$, depending only on $\mathcal{E}$, such that any initial data set containing $\mathcal{E}$ must violate the hypotheses of Witten's proof of the positive mass theorem in an $R$-neighborhood of $\mathcal{E}$.
  This implies the positive mass theorem for spin initial data sets with arbitrary ends, and we also prove a rigidity statement.
  Our proofs are based on a modification of Witten's approach to the positive mass theorem involving an additional independent timelike direction in the spinor bundle.
\end{abstract}

\maketitle

\section{Introduction}
The positive mass theorem is a fundamental result in differential geometry and the geometry of scalar curvature that arose as a conjecture in general relativity, where it was formulated for \emph{asymptotically flat} manifolds $(M^n,g)$ (complete manifolds whose ends are asymptotic to Euclidean space $\mathbb{R}^n$). The theorem gives an inequality $\mass \geq 0$ where $\mass$ is a quantity computed at infinity in an asymptotically flat end, and a rigidity statement stating that if $\mass=0$ then $(M^n,g)$ is Euclidean space $\mathbb{R}^n$.
 
There are two main settings for the positive mass theorem, one that applies to Riemannian manifolds $(M,g)$ where the metric $g$ is assumed to have nonnegative scalar curvature $\scal\geq 0$ (the Riemannian case), and one which applies to \textit{initial data sets} $(M^n,g,k)$ satisfying the \textit{dominant energy condition} (DEC) $\mu\geq |J|$.
\begin{defi}\label{AFdata_intro}
An initial data set $(M^n,g,k)$ is a Riemannian manifold $(M^n,g)$ endowed with a symmetric two tensor $k$, and the dominant energy condition is
\begin{equation}
    \mu\geq |J|,
\end{equation}
where 
\begin{equation}\label{decineq}
\mu =\frac{1}{2}(\scal_g+(\tr_g(k))^2-|k|_g^2), \quad J=\operatorname{div} (k)- \D{}\tr_g(k).
\end{equation}
\end{defi}
The initial data set version of the positive mass theorem involves significant additional technicalities over the Riemannian case, and the appropriate rigidity statement for initial data sets also differs in that it involves the existence of an embedding into Minkowski spacetime $(\mathbb{R}^{1, n},\eta)$.
 
In $1988$, Schoen--Yau \cite{SY88} conjectured, in the Riemannian case, that the positive mass theorem holds for manifolds that need only have \textit{one} asymptotically flat end (i.e., there may be other complete ends but nothing about them is assumed other than the curvature condition). A number of more recent works on the subject have now settled the following \cite{Lesourd-Unger-Yau:Positive_mass_ends,LLU22a,LLU22b,Zhu22,BartnikChrusciel,CZ2021-PositiveMass}. 
\begin{thm}[Positive mass theorem for complete manifolds with arbitrary ends]\label{pmt.arbitrary.ends}
Let $(M^{n \geq 3},g)$ be a connected complete Riemannian manifold that contains a distinguished asymptotically flat end $\mathcal{E}$ and nonnegative scalar curvature $\scal \geq 0$. Assume either that $n\leq 7$ or that $M^n$ is spin. Then the ADM mass of $\mathcal{E}$ satisfies $\mass(\mathcal{E})\geq 0$, with equality if and only if $(M^n,g)$ is Euclidean space.
\end{thm}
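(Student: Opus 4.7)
The plan is to obtain the theorem as an immediate consequence of the quantitative Witten-type dichotomy announced in the abstract, viewing the Riemannian setup as the special case $k\equiv 0$ of the initial data framework (so that $\mu=\tfrac{1}{2}\scal$, $J=0$, and the DEC becomes $\scal\geq 0$). I focus on the spin case, since the $n\leq 7$ case relies on minimal surface techniques outside the scope of a spinor-based paper.

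The main work is in the quantitative statement: for any asymptotically flat end $\mathcal{E}$ with $\mathrm{E}<|\mathrm{P}|$, there is $R=R(\mathcal{E})>0$ such that every spin initial data set $(M,g,k)$ extending $\mathcal{E}$ has $\mu-|J|<0$ somewhere within distance $R$ of $\mathcal{E}$. Following the approach hinted at in the abstract, I would enlarge the spinor bundle by an independent timelike direction so as to absorb the $k$-dependent part of the curvature endomorphism of the associated Dirac-type operator; then, on a neighborhood $U_R$ of $\mathcal{E}$, solve a Dirac boundary value problem whose asymptotic Witten spinor at infinity encodes the deficit $|\mathrm{P}|-\mathrm{E}>0$; finally apply the resulting Schrödinger--Lichnerowicz--Witten identity, whose bulk integrand takes the form $|\nabla\psi|^2+(\mu-|J|)|\psi|^2$. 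If $\mu-|J|\geq 0$ throughout $U_R$, the identity would force $\mathrm{E}\geq|\mathrm{P}|$, a contradiction. Uniform elliptic estimates at infinity make the choice of $R$ depend only on $\mathcal{E}$.

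Granting this, $\mass(\mathcal{E})\geq 0$ is immediate: if $\mass(\mathcal{E})<0$ then since $k\equiv 0$ gives $\mathrm{P}=0$ and $\mathrm{E}=\mass(\mathcal{E})$, one has $\mathrm{E}<|\mathrm{P}|$, so the quantitative statement produces a point in $U_R$ with $\tfrac{1}{2}\scal=\mu-|J|<0$, contradicting $\scal\geq 0$. For the rigidity case $\mass(\mathcal{E})=0$, I would apply a compactly supported deformation argument: if $(M,g)$ is not flat, a conformal change supported where $\scal>0$ (or, if $\scal\equiv 0$, a Corvino-type Ricci deformation) produces a metric $\tilde g$ agreeing with $g$ off a compact set, with $\scal_{\tilde g}\geq 0$ and $\mass(\mathcal{E},\tilde g)<0$, contradicting the inequality just proven. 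Asymptotic flatness of $\mathcal{E}$ then forces $(M,g)=\mathbb{R}^n$.

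The main obstacle is the quantitative statement itself: classical Witten theory produces only a sign for the mass under \emph{global} DEC on a complete manifold, whereas here the argument must be localized to a neighborhood whose size depends only on $\mathcal{E}$. Engineering the extra timelike factor in the spinor bundle so that the curvature endomorphism of the Dirac operator is pointwise controlled by $\mu-|J|$, and choosing boundary conditions on $\partial U_R$ under which a nontrivial spinor with the prescribed asymptotic behavior exists, constitutes the essential technical content. The rigidity step is more routine, but the deformation must be carried out carefully so as not to affect the non-AF portion of $M$.
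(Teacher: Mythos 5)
Your inequality step is fine and is essentially the paper's own route: Theorem~1.2 is quoted here as a background result (credited to Lesourd--Unger--Yau, Zhu, Lee--Lesourd--Unger, Bartnik--Chru\'sciel and Cecchini--Zeidler), and the spin case of the nonnegativity does drop out of the new machinery exactly as you describe, by specializing \cref{thm:spacetime_pmt_localized} to $k\equiv 0$ (so $\mathrm{E}=\operatorname{mass}$, $\mathrm{P}=0$, and the dominant energy condition is $\mathrm{scal}\geq 0$); this is precisely how \cref{neater.than.bartnikchrusciel} is deduced. Restricting to the spin case is also consistent with the paper, which does not reprove the $3\leq n\leq 7$ non-spin case.

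The genuine gap is your rigidity step. A metric $\tilde g$ that \emph{agrees with $g$ outside a compact set} has the same ADM mass as $g$: the mass is computed purely from the asymptotics in $\mathcal{E}$, so your compactly supported conformal or ``Corvino-type'' deformation cannot produce $\operatorname{mass}(\mathcal{E},\tilde g)<0$, and the intended contradiction evaporates. The classical mass-decreasing perturbations (a conformal factor solving a linear elliptic equation and tending to $1$ at infinity, or the deformation in the direction $-\operatorname{Ric}$ followed by a conformal correction) are \emph{not} compactly supported; running them here would require global solvability and preservation of $\mathrm{scal}\geq 0$ on the arbitrary ends, which is exactly the obstruction that forces the cited works, and this paper, to argue differently. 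Moreover, even granting $\operatorname{Ric}\equiv 0$, the conclusion $(M,g)=\mathbb{R}^n$ still needs an argument excluding the other ends and nontrivial fundamental group. The paper's mechanism, specialized to $k=0$, repairs all of this: when the mass vanishes, \cref{prop:parallel_spinors_exist} produces parallel spinors asymptotic to \emph{arbitrary} constant spinors, hence a global parallel frame of the spinor bundle, so $g$ is flat; then, since the universal cover is $\mathbb{R}^n$ and has a single end while $M$ contains the asymptotically flat end $\mathcal{E}$, the cover is trivial and $(M,g)$ is Euclidean space (in the initial data version this is the Killing development argument in the proof of \cref{rigidity.arb.ends}). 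Your rigidity paragraph should be replaced by this parallel-spinor argument rather than a mass-decreasing deformation.
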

\begin{rem}
In dimensions $3\leq n\leq 7$, the nonnegativity `$\mass(\mathcal{E})\geq 0$' in Theorem \ref{pmt.arbitrary.ends} was first proved by Lesourd--Unger--Yau \cite{Lesourd-Unger-Yau:Positive_mass_ends}. Later, J.~Zhu \cite{Z22} gave a different proof that also included the rigidity statement. Yet another proof of nonnegativity and rigidity was given by Lee--Lesourd--Unger \cite{LLU22a}, building on their own earlier work \cite{LLU21}. \\ \indent 
Under a spin assumption, the nonnegativity in Theorem \ref{pmt.arbitrary.ends} was proved by Bartnik-Chru\'{s}ciel~\cite{BC02}; in fact, their argument is more general and applies to initial data sets satisfying the DEC. More recently, in the Riemannian case, a different proof of both nonnegativity and rigidity was given by Cecchini\nobreakdash--Zeidler~\cite{CZ2021-PositiveMass}, whose methods are also able to cover cases of manifolds that need not be complete.
Subsequently, these techniques have been extended to the case of asymptotically hyperbolic ends by Chai--Wan~\cite{CW22}.
\end{rem}

In this paper, by building on \cite{CZ2021-PositiveMass}, we give a different proof of the inequality `$\mass\geq 0$' for spin initial data sets satisfying the DEC, and we prove the appropriate rigidity statement in all dimensions for spin initial data sets.
Our proof of the inequality differs significantly from that of Bartnik--Chru\'{s}ciel \cite{BC02}, and our method yields a new \textit{quantitative shielding theorem} for the DEC that is able to deal with incomplete manifolds. The main difficulty we have to overcome is that, whilst the method in \cite{CZ2021-PositiveMass} allows for incomplete manifolds and arbitrary ends, we now face the fact that $g$ is nonlinearly coupled to $k$. To deal with this, we introduce another independent `time' direction in the spin bundle and we consider a modified connection. As a result, we find that the associated Dirac operator satisfies a Schrödinger--Lichnerowicz formula which allows us to use some of the ideas in \cite{CZ2021-PositiveMass} that enabled dealing with incomplete manifolds and arbitrary ends. We expect that similar tricks may be useful in other contexts. 

For initial data sets, the definition of asymptotic flatness involves decay assumptions on both $g$ and $k$ (see~\cref{AFdata}), and moreover there are two asymptotic quantities of interest, the ADM energy $\admEnergy$ and ADM linear momentum $\admMomentum$ (see \cite{Lee2019-GeometricRelativity} for definitions and context). In this setting, the positive mass theorem is as follows. 
\begin{thm}[Positive mass theorem for initial data sets]\label{IDS.pmt}
Let $(M^{n \geq 3},g,k)$ be an asymptotically flat initial data set that satisfies the dominant energy condition $\mu-|J|\geq 0$. If either $n\leq 7$ or $M^n$ is spin, then $\admEnergy_{\mathcal{E}}\geq |\admMomentum_{\mathcal{E}}|$ for each asymptotically flat end \(\mathcal{E} \subset M\).%
\end{thm}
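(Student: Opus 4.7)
The plan is to follow a modified Witten argument in which the usual spinor bundle on $(M,g)$ is tensored with an auxiliary rank-two Clifford module that introduces an extra ``timelike'' direction $e_0$. On this bundle $S = S_M \otimes \mathbb{C}^2$ one defines a modified spin connection
\[
  \widehat{\nabla}_X = \nabla^{S_M}_X + \tfrac{1}{2} e_0 \cdot k(X,\cdot)\,,
\]
together with the associated hypersurface Dirac operator $\widehat{D} = \sum_i e_i \cdot \widehat{\nabla}_{e_i}$. The crucial input is a Schrödinger--Lichnerowicz type identity
\[
  \widehat{D}^2 \;=\; \widehat{\nabla}^* \widehat{\nabla} \;+\; \mathcal{R},
\]
where the zeroth-order curvature term satisfies the pointwise estimate $\langle \mathcal{R} \psi, \psi \rangle \geq (\mu - |J|)\, |\psi|^2$. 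The auxiliary timelike factor is what decouples the otherwise nonlinear interaction between $g$ and $k$, making it possible to treat the energy and momentum parts on an equal footing and to adapt the cutoff and deformation techniques of~\cite{CZ2021-PositiveMass} developed in the Riemannian case.

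First I would assume the AF end $\mathcal{E}$ is the distinguished one and reduce to showing $\admEnergy - \omega \cdot \admMomentum \geq 0$ for every unit timelike covector $(1,\omega)$; this will give $\admEnergy \geq |\admMomentum|$. On the AF end, a parallel spinor $\psi_0$ for $(\mathbb{R}^n, \delta)$ lifted to $S$ using $\omega$ gives an asymptotic model. One then seeks $\psi = \psi_0 + \phi$ with $\phi \to 0$ at infinity solving $\widehat{D}\psi = 0$. Standard Witten boundary expansion identifies the integrated term
\[
  \int_M |\widehat{\nabla} \psi|^2 + \langle \mathcal{R}\psi,\psi\rangle \,\D{v}_g \;=\; c_n \bigl(\admEnergy - \omega \cdot \admMomentum\bigr)\,|\psi_0|^2
\]
provided $\psi$ is under sufficient control. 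Combined with the DEC bound on $\mathcal{R}$, this yields the inequality.

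The harder step is existence and integrability of $\psi$, and this is where the paper's main technical results enter. In the complete AF setting the solvability of $\widehat{D}\psi = -\widehat{D}\psi_0$ in a weighted Sobolev space is essentially Bartnik--Chru\'sciel~\cite{BC02}. To cover the arbitrary ends case, I would instead invoke the DEC shield theorem advertised in the abstract: if $\admEnergy < |\admMomentum|$ were to hold for $\mathcal{E}$, then the quantitative shielding statement would force a violation of the hypothesis ``$\mu \geq |J|$ plus controlled geometry'' inside a neighborhood of $\mathcal{E}$ of radius depending only on $\mathcal{E}$. Since $(M,g,k)$ is globally AF and satisfies the DEC, no such violation exists, which forces $\admEnergy \geq |\admMomentum|$. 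In other words, \cref{IDS.pmt} is a corollary of the shield theorem applied to the \emph{whole} manifold as its own ``shield.''

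The main obstacle I anticipate is the construction of $\widehat{D}$ and the verification of the SL formula with the sharp lower bound by $\mu - |J|$ (rather than the weaker bound used in the classical spacetime Witten argument, which requires additional auxiliary spinor fields or the use of half-spinors). Once this identity is in hand, the rest of the proof is the transposition of the arguments from~\cite{CZ2021-PositiveMass}: namely, the harmonic spinor is produced via a limit of solutions on compact exhaustions with controlled boundary conditions, and the key analytic input is a Poincaré-type inequality on the shield that converts the DEC gap into $L^2$-control on the error spinor $\phi$. For rigidity, equality forces $\widehat{\nabla}\psi = 0$ for $n$ linearly independent parallel sections, and standard arguments identify $(M,g,k)$ with a spacelike slice in Minkowski space.
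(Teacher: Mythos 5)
There is a genuine gap: the statement you are proving is not only the spin case. \cref{IDS.pmt} asserts $\admEnergy\geq|\admMomentum|$ whenever \emph{either} $3\leq n\leq 7$ \emph{or} $M$ is spin, and your entire argument is spinorial (modified Dirac--Witten operator, Schr\"odinger--Lichnerowicz identity, solvability of $\widehat{D}\psi=-\widehat{D}\psi_0$ in weighted spaces). No such argument can reach the non-spin case in dimensions $3\leq n\leq 7$, which requires completely different techniques (the marginally outer trapped hypersurface/Jang-equation descent of Eichmair--Huang--Lee--Schoen, generalizing Schoen--Yau). In fact the paper does not prove \cref{IDS.pmt} at all: it is quoted as a known background result, with the $n=3$ case attributed to Schoen--Yau, the spin case to Witten/Parker--Taubes, and the $3\leq n\leq 7$ case to Eichmair--Huang--Lee--Schoen. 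The paper's own new machinery (the spinor bundle for $\T^\ast M\oplus\R^2$, the Callias potential $\psi\epsilon_1$, and the shield/localized theorems) is aimed at the \emph{arbitrary ends} and \emph{shield} refinements in the spin setting; the classical complete asymptotically flat spin case then follows as a special case of Theorem~C, but the non-spin part of \cref{IDS.pmt} is simply outside the scope of these methods.

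Within the spin case your outline is essentially consistent with the paper's framework, with two caveats. First, for a \emph{complete} asymptotically flat $M$ you do not need the second timelike direction or the shield theorem at all: the classical Dirac--Witten operator $\modDirac=\Dirac-\tfrac12\tr(k)\epsilon_0$ with the standard weighted-Sobolev isomorphism already suffices (this is Witten/Parker--Taubes/Bartnik--Chru\'sciel); the extra $\epsilon_1$-direction and the potential $\psi_r$ are only needed when completeness or the DEC is only assumed near the distinguished end, as in Theorems~A and~B, and your alternative route via ``the whole manifold as its own shield'' is precisely the contrapositive argument the paper uses to deduce Theorem~C from Theorem~B. Second, a small constant slip: the curvature term in the Schr\"odinger--Lichnerowicz formula is bounded below by $\tfrac12(\mu-|J|)|\psi|^2$, not $(\mu-|J|)|\psi|^2$; this does not affect positivity but matters for the boundary identification $\tfrac12(n-1)\omega_{n-1}(\admEnergy-|\admMomentum|)$.
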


\begin{rem}
Theorem \ref{IDS.pmt} was first proved for $n=3$ by Schoen--Yau \cite{Schoen-Yau81:General_Asymptotics}. Later, Witten \cite{Witten:positive_mass} gave a spinor argument for $n=3$ that was later made rigorous by Parker--Taubes \cite{Parker-Taubes:Wittens_Proof}, and was shown to hold for all $n$ (under a spin assumption). More recently, the theorem was proved for manifolds in the low dimensional range $3\leq n\leq 7$ by Eichmair\nobreakdash--Huang\nobreakdash--Lee\nobreakdash--Schoen~\cite{EHLS}. 
\end{rem}

The rigidity associated with Theorem \ref{IDS.pmt} is more subtle.  
\begin{thm}[Rigidity of positive mass theorem for initial data sets]\label{IDS.rigidity}
Let $(M^{n \geq 3},g,k)$ be a connected asymptotically flat initial data set satisfying $\mu-|J|\geq 0$. Then the following holds:
\begin{enumerate}
\item[\textup{(1)}] for $3\leq n\leq 7$, if $\admEnergy=0$, then $(M^n,g,k)$ embeds into Minkowski spacetime as a spacelike hypersurface,
\item[\textup{(2)}] 
for any $n$, if $(g,k)$ are assumed to satisfy certain extra decay assumptions on a given end and $\admEnergy=|\admMomentum|$ on that end, then in fact $\admEnergy=0$.
\end{enumerate}
\end{thm}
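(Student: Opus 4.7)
The plan is to treat the two parts separately. Part (1) is obtained by combining the inequality $\admEnergy \geq |\admMomentum|$ from \cref{IDS.pmt}, which forces $|\admMomentum|=0$ whenever $\admEnergy=0$, with the spacetime embedding rigidity of Eichmair--Huang--Lee--Schoen; both ingredients are available in the low-dimensional range $3\leq n\leq 7$ and can be invoked directly. The genuinely new work is part (2), where the approach is to push the paper's modified Witten argument---with the additional timelike direction in the spinor bundle---through to the equality case, following the template of Beig--Chru\'sciel and Chru\'sciel--Maerten.

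Concretely, since $(\admEnergy,\admMomentum)$ is future-directed null by hypothesis, I would first pick a nonzero constant spinor $\phi_0$ on the model end lying in the kernel of the ADM energy-momentum endomorphism $\admEnergy - \admMomentum_i \gamma^0\gamma^i$. Using the Fredholm theory for the modified Dirac operator developed to prove \cref{IDS.pmt}, produce a spinor $\phi$ on $M$ asymptotic to $\phi_0$ that solves the modified Dirac equation. The Schr\"odinger--Lichnerowicz identity of the paper then reads as an integral equality whose boundary contribution at infinity is a positive multiple of $\langle(\admEnergy - \admMomentum_i\gamma^0\gamma^i)\phi_0,\phi_0\rangle=0$, while the interior integrand is a sum of $|\hat\nabla\phi|^2$ and a nonnegative zeroth-order term controlled by $\mu-|J|\geq 0$. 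Both pieces must vanish pointwise, so $\phi$ is parallel for the modified connection $\hat\nabla$ on all of $M$ and the DEC is saturated along the support of $\phi$.

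It remains to convert this pointwise parallelism into the conclusion $\admEnergy=0$. The Dirac current $V^{\mu}=\langle\phi,\gamma^{\mu}\phi\rangle$ is a future-directed causal vector field whose asymptotic value is proportional to the null translation along $(\admEnergy,\admMomentum)$; the equation $\hat\nabla\phi=0$ translates into a Killing-type system for the spacetime development of $V$. The extra decay built into the hypothesis is exactly what is needed so that this asymptotic structure is not washed out by error terms, and the standard analysis of a null Killing-type symmetry on an asymptotically flat end then forces the mass aspect, and hence $\admEnergy$, to vanish.

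The principal obstacle will be this final step: promoting $\hat\nabla$-parallelism to a genuine spacetime symmetry is delicate because $g$ and $k$ enter the modified connection nonlinearly, and the zeroth-order term of the Lichnerowicz identity mixes the full DEC density with the extra timelike direction. This is precisely why the extra decay assumption cannot be avoided---without sharper asymptotic control on $k$, the modified parallel spinor yields only asymptotic, rather than global, information, and the rigidity argument at infinity fails to close.
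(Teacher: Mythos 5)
This theorem is not proved in the paper at all: it is a survey statement, and the paper's ``proof'' consists of the attributions in the surrounding remarks --- part (1) to Schoen--Yau \cite{SchoenYau:HypersurfaceMethod} ($n=3$) and Eichmair \cite{E13} ($4\le n\le 7$), part (2) to Beig--Chru\'sciel \cite{BC96} and Chru\'sciel--Maerten \cite{CM05} under a spin assumption and to Huang--Lee \cite{Huang-Lee.1} without it. Your treatment of (1) is likewise a citation, but it is misattributed and, as written, circular: Eichmair--Huang--Lee--Schoen \cite{EHLS} prove the inequality $\admEnergy\ge|\admMomentum|$, not the rigidity, and ``the spacetime embedding rigidity'' you invoke \emph{is} statement (1); deducing $\admMomentum=0$ from $\admEnergy=0$ via \cref{IDS.pmt} is a true preliminary observation but does not by itself produce the Minkowski embedding, which requires the Jang-equation analysis of \cite{SchoenYau:HypersurfaceMethod,E13}.

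The substantive gap is in part (2). The statement carries no spin hypothesis (``for any $n$''), yet your entire argument runs through the spinor bundle, the modified Dirac--Witten operator and its Fredholm theory, so at best it reproduces the spin case of \cite{BC96,CM05}; the general case needs Huang--Lee's non-spinorial argument \cite{Huang-Lee.1}, as noted in \cref{rem:Huang-Lee}. Within the spin case your sketch is the standard route, and it parallels what the paper actually does for its own generalization \cref{rigidity.arb.ends}~\labelcref{rigidity.arb.end:item2}: produce a $\modnabla$-parallel spinor asymptotic to a suitably chosen constant spinor (\cref{prop:parallel_spinors_exist}), form the lapse--shift pair $(N,Y)$ and its Killing development, and conclude by asymptotic analysis. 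But the step you dismiss as ``the standard analysis of a null Killing-type symmetry'' is exactly \cite[Theorem~2.5]{CM05}, quoted as \cref{rigidity.comp}, and invoking it requires verifying from the parallel-spinor equation that $(N,Y)$ satisfy the KID-type conditions \eqref{killing.condition}, \eqref{decay.on.tau} and the decay \eqref{decay.on.mu} of $\mu$ and $J$ --- the computations (3.15)--(3.28) of \cite{CM05}, which is where the extra decay hypothesis actually enters (and is necessary, by the Huang--Lee counterexamples mentioned in \cref{rem:Huang-Lee}). Note also that the argument is run by contradiction assuming $\admMomentum\neq 0$ (which forces $N^2=|Y|^2$ via \cref{lem:vanishing_derivative}), with the lemma then yielding $(\admEnergy,\admMomentum)=(0,0)$; your sketch leaves this entire closing mechanism, which is the core of the proof, as a black box.
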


\begin{rem}
The proof of (1) was given by Eichmair \cite{E13} building on the $n=3$ case that had been done by Schoen--Yau \cite{SchoenYau:HypersurfaceMethod}. The proof of (1) does not require any extra decay assumptions on $g$ and $k$.\footnote{Eichmair used an extra decay condition on $\tr_g(k)$ for $n=3$, but this extra condition on $\tr_{g}(k)$ when $n=3$ can be lifted \cite{HKK22}.}
More recently, we note that Huang--Lee \cite{Huang-Lee.3} have given an elegant argument for (1) which uses slightly stronger assumptions but which holds for all $n$.  
\end{rem}

\begin{rem}\label{rem:Huang-Lee} 
Under a spin assumption, (2) was proved by Beig--Chru\'{s}ciel \cite{BC96} for $n=3$, and by Chru\'{s}ciel--Maerten \cite{CM05} for all $n$. 
Later, using ideas from \cite{BC96,CM05}, Huang--Lee \cite{Huang-Lee.1} gave an elegant proof of (2) that did not involve a spin assumption.
In another paper, Huang--Lee \cite{Huang-Lee.2} constructed $9$-dimensional asymptotically flat initial data sets satisfying $\mu-|J|\geq0$, $\admEnergy=|\admMomentum|$, $\admEnergy \neq 0$ but which do not satisfy the `extra decay' assumption, which shows that there is some form of extra decay assumption is necessary for (2). It is not known to us whether $9$ is a critical dimension for constructing such examples. 
\end{rem}
In view of Theorems \ref{pmt.arbitrary.ends} and \ref{IDS.pmt}, it is natural to expect the following.  
\begin{conj}[Nonnegativity for initial data sets with arbitrary ends]\label{dec.arbitrary.ends}
Let $(M^{n\geq 3},g,k)$ be a complete initial data set that contains at least one asymptotically flat end $\mathcal{E}$ and that satisfies $\mu-|J|\geq 0$. Then $\admEnergy_\mathcal{E}\geq |\admMomentum_\mathcal{E}|$. 
\end{conj}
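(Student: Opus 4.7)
The plan is to reduce the conjecture, in the spin case, to a \emph{quantitative shielding theorem} in the spirit of \cite{CZ2021-PositiveMass}. Namely, we aim to show that for any asymptotically flat end $\mathcal{E}$ with $\admEnergy_\mathcal{E} < |\admMomentum_\mathcal{E}|$ there exists a radius $R = R(\mathcal{E}) > 0$, depending only on the asymptotic data of $\mathcal{E}$, such that no initial data set $(M,g,k)$ containing $\mathcal{E}$ can satisfy $\mu \geq |J|$ throughout an $R$-neighborhood of a fixed compact core of $\mathcal{E}$. The conjecture (in the spin case) then follows by contrapositive: if DEC holds globally on $M$, the positive mass inequality must hold on $\mathcal{E}$.

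The key analytic input is a modification of Witten's Dirac operator. Following the strategy announced in the abstract, one tensors the usual spin bundle with a two-dimensional auxiliary factor carrying an independent ``timelike'' direction, and defines a connection incorporating Clifford multiplication by $k$ through both the ambient normal direction and the new one. The resulting Dirac-type operator $\widetilde{D}$ should satisfy a Schrödinger--Lichnerowicz identity
\[
\widetilde{D}^{*}\widetilde{D} \;=\; \widetilde{\nabla}^{*}\widetilde{\nabla} + \mathcal{R},
\]
where the zeroth-order term $\mathcal{R}$ is bounded below by a positive multiple of $\mu - |J|$. The role of the extra timelike direction is precisely to convert the indefinite expression $\mu - \langle J, \cdot \rangle$ appearing in the classical Witten formula into a manifestly nonnegative potential under DEC, mirroring the way scalar curvature enters the Riemannian setting.

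With this setup, the strategy of \cite{CZ2021-PositiveMass} should transfer: exhaust $M$ by compact regions $\Omega_i$ containing ever-larger portions of $\mathcal{E}$, impose a chirality-type (APS or MIT-bag-like) boundary condition on $\partial\Omega_i$, and solve the Dirac boundary value problem $\widetilde{D}\varphi_i = 0$ with asymptotics approximating a parallel spinor on $\mathcal{E}$ adapted to a vector $V \in \mathbb{R}^{1,n}$ for which $\admEnergy_\mathcal{E} - \langle V, \admMomentum_\mathcal{E}\rangle < 0$. Integrating the Schrödinger--Lichnerowicz identity over $\Omega_i$ expresses this negative quantity as a sum of nonnegative bulk terms plus a boundary term that can be controlled, via a quantitative capacity estimate, purely in terms of the geometry within an $R$-neighborhood of $\mathcal{E}$; choosing $R$ large enough and then sending $i \to \infty$ produces the desired contradiction. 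The main obstacle will be the existence and decay theory for $\widetilde{D}$ on manifolds that are neither complete nor asymptotically nice outside $\mathcal{E}$: unlike in the Riemannian setting of \cite{CZ2021-PositiveMass}, the tensor $k$ enters the modified connection nonlinearly and need not decay away from $\mathcal{E}$, so one must identify a boundary condition for $\widetilde{D}$ that is simultaneously self-adjoint, compatible with the doubled spinor bundle, and robust enough to yield the quantitative capacity estimate uniformly in $\Omega_i$, all while ensuring that the DEC-driven positivity in $\mathcal{R}$ survives integration by parts.
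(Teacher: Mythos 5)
Your overall architecture matches the paper's: prove the conjecture (in the spin case) by contrapositive from a localized statement — if $\admEnergy_{\mathcal{E}}<\lvert\admMomentum_{\mathcal{E}}\rvert$ then DEC, completeness or spin must fail within distance $R$ of the end — using a Dirac--Witten-type operator on a spinor bundle enlarged by extra timelike directions and a Schrödinger--Lichnerowicz formula. However, there is a genuine gap at the step you wave through with ``a chirality-type (APS or MIT-bag-like) boundary condition'' and ``a quantitative capacity estimate'': on the far boundary $\partial\Omega_i$ (resp.\ $\partial X_r$) nothing whatsoever is known beyond the DEC, so the boundary integrand produced by integrating the Lichnerowicz formula — which involves the mean curvature of $\partial\Omega_i$ and $k(\nu,\blank)$ there — cannot be bounded ``purely in terms of the geometry within an $R$-neighborhood of $\mathcal{E}$,'' and no choice of APS/MIT-bag condition makes it nonnegative without geometric hypotheses (e.g.\ a weakly trapped boundary) that you do not have. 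This is precisely the difficulty the paper's construction is designed to kill: the second timelike direction $\epsilon_1$ is not there to absorb $\mu$ and $J$ (that is the role of $\epsilon_0$, exactly as in Witten's classical argument), but to carry a Callias potential $\psi$, giving the operator $\modDirac^\psi=\modDirac+\psi\epsilon_1$ and the boundary condition $\chi_1$, whose boundary term is $\tfrac{n-1}{2}(\mean+\tfrac{2}{n}\psi)-\lvert k(\nu,\blank)|_{\T\partial X}\rvert$. Choosing $\psi_r$ to grow like $1/(r-t)$ toward $\partial X_r$ makes this term nonnegative regardless of the unknown geometry there, while the price $\lvert\D\psi_r\rvert$ is paid only on a fixed collar inside the end and is of size $O(1/r)$, which is what produces the bound $\admEnergy_{\mathcal{E}}-\lvert\admMomentum_{\mathcal{E}}\rvert\geq -C'/r$ and the conclusion as $r\to\infty$.

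The same potential is also what resolves the issue you flag but leave open, namely solvability: invertibility of $\modDirac^{\psi_r}$ on $\SobolevH^1_{-q}(X_r,S;\chi_1)$ is obtained from a weighted Poincaré inequality on a fixed exterior region $X_0\subset\mathcal{E}$ together with the pointwise nonnegativity of $\zeta^{\psi_r}$ outside a compact collar, both of which depend only on data on $X_0$ and are uniform in $r$; without the potential (or some substitute such as the density/shielding arguments of Lesourd--Unger--Yau), neither the invertibility nor the uniform control of the boundary term is available, so as written your proposal does not close. In short: correct skeleton, but the decisive mechanism — the Callias potential coupled to the $\chi_1$ boundary condition, which replaces your hypothetical capacity estimate — is missing, and the role you assign to the extra timelike direction misidentifies where the DEC enters.
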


\begin{rem}
If $M^n$ is spin, this was shown by Bartnik--Chru\'{s}ciel \cite{BartnikChrusciel}. 
\end{rem}
In view of the rigidity in Theorems \ref{pmt.arbitrary.ends} and \ref{IDS.rigidity}, the following is also natural.
\begin{conj}[Rigidity for initial data sets with arbitrary ends]\label{rigidityconj}
Let $(M^{n \geq 3},g,k)$ be a connected complete initial data set that contains at least one asymptotically flat end $\mathcal{E}$ and that satisfies $\mu-|J|\geq 0$. Then the following holds: 
\begin{enumerate}
\item[(1)] if $\admEnergy_\mathcal{E}=0$ then $(M^n,g,k)$ embeds as a spacelike hypersurface in Minkowski space,
\item[(2)] if $(g,k)$ on $\mathcal{E}$ has an extra decay and $\admEnergy_\mathcal{E}=|\admMomentum_\mathcal{E}|$, then $\admEnergy_\mathcal{E}=0$.
\end{enumerate}
\end{conj}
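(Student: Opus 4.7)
The plan is to prove the spin case of the conjecture by extending the modified Witten argument developed above for \cref{IDS.pmt}. Recall that the spinor bundle has been enlarged by an auxiliary timelike direction, so that the associated Dirac operator $\widetilde{D}$ satisfies a Schr\"odinger--Lichnerowicz identity of the form $\widetilde{D}^{2}=\widetilde{\nabla}^{*}\widetilde{\nabla}+\mathcal{R}$, where $\widetilde{\nabla}$ is the modified spin connection and $\mathcal{R}$ is a symmetric endomorphism controlled by $\mu-|J|$. For part~(1), assume $M$ is spin and $\admEnergy_{\mathcal{E}}=0$. For each asymptotically constant spinor $\psi_0$ on $\mathcal{E}$, I would construct a $\widetilde{D}$-harmonic spinor $\psi$ on $M$ with prescribed asymptotics $\psi|_{\mathcal{E}}\to\psi_0$ by solving the modified Dirac equation on an exhaustion of $M$ and extracting a limit, with uniform control coming from the dominant energy shield used in the arbitrary-ends positive mass proof. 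Witten's boundary identity at infinity, now in the enlarged bundle, evaluates to a Lorentzian pairing in $\psi_0$; the hypothesis $\admEnergy_{\mathcal{E}}=0$ combined with the integrated Lichnerowicz identity forces $\widetilde{\nabla}\psi\equiv 0$ and $(\mu-|J|)|\psi|^2\equiv 0$.

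Letting $\psi_0$ range over a basis of the asymptotic spin representation yields a full-rank frame of $\widetilde{\nabla}$-parallel spinors on $M$. A direct curvature computation then shows that the curvature of $\widetilde{\nabla}$ vanishes, which unpacks to the Gauss and Codazzi equations of Minkowski space and produces an isometric embedding of $(M,g,k)$ into $(\mathbb{R}^{1,n},\eta)$ as a spacelike hypersurface; this gives part~(1).

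For part~(2), the extra decay assumption on $\mathcal{E}$ renders the boundary pairing smooth in $\psi_0$, so one can use a boost argument in the spirit of Beig--Chru\'{s}ciel~\cite{BC96} and Chru\'{s}ciel--Maerten~\cite{CM05}. Selecting $\psi_0$ to be a null eigenvector of the momentum endomorphism, the hypothesis $\admEnergy_{\mathcal{E}}=|\admMomentum_{\mathcal{E}}|$ again produces a nontrivial $\widetilde{\nabla}$-parallel spinor on $M$; the extra decay then allows one to push the asymptotic expansion to sufficient order to conclude that a family of such parallel spinors is incompatible with a nontrivial momentum, forcing $\admMomentum_{\mathcal{E}}=0$, and hence $\admEnergy_{\mathcal{E}}=0$.

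The main obstacle is the arbitrary-ends feature: $M$ need not be complete, and the geometry outside $\mathcal{E}$ is entirely uncontrolled, so the standard $L^2$ Dirac theory on $M$ is unavailable. The shielding technique of the present paper delivers existence of approximate harmonic spinors together with estimates that survive across the shield, but the rigidity step demands that the pointwise condition $\widetilde{\nabla}\psi_R=0$ on the exhausting family persist in the limit. Quantitatively, $\psi_R$ must decay through the shield at a rate comparable to the shield thickness, so that the limiting spinor on all of $M$ inherits genuine parallelism, not merely a nonnegativity bound. This passage from nonnegativity to rigidity, in the absence of a globally defined self-adjoint Dirac operator, is the step I expect to absorb most of the technical effort.
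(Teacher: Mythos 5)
Your overall skeleton --- a modified Witten argument in the enlarged spinor bundle, solving the (Callias-augmented) Dirac--Witten equation on an exhaustion, extracting a $\modnabla$-parallel spinor in the equality case, and then invoking Beig--Chru\'sciel/Chru\'sciel--Maerten asymptotics for part (2) --- is the same strategy the paper follows. But there is a genuine gap in your part (1) at the final step. From a full frame of $\modnabla$-parallel spinors you assert that vanishing curvature ``unpacks to the Gauss and Codazzi equations of Minkowski space and produces an isometric embedding of $(M,g,k)$ into $(\R^{1,n},\eta)$.'' The fundamental theorem for hypersurfaces with flat ambient data only yields a \emph{local} isometric immersion, globalizing at best to an immersion of the \emph{universal cover}; it gives neither injectivity nor any control of $\pi_1(M)$, so it does not produce the claimed embedding of $M$ itself. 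The paper circumvents this by a different mechanism: from the parallel spinor it builds the pair $N=\langle u,u\rangle$, $Y(\xi)=\langle \clm(\xi^\flat)\epsilon_0 u,u\rangle$, forms the Killing development \labelcref{eq:Killing_development}, shows $N^2-|Y|^2\equiv 1$ (\cref{lem:vanishing_derivative}) so that the development is a geodesically complete Lorentzian manifold (\cref{lem:geodesically_complete}) containing $(M,g,k)$ as a slice, proves flatness using the extended parallel frame, and only then invokes the classification of complete flat Lorentzian manifolds together with an end-counting argument to force simple connectedness and obtain a genuine embedding into Minkowski space. Some substitute for this globalization step is indispensable. A smaller but real omission in the same part: with only $\admEnergy_{\mathcal{E}}=0$, the Witten boundary term is $\admEnergy_{\mathcal{E}}|u_0|^2+\langle u_0,\clm(\admMomentum_{\mathcal{E}})\epsilon_0 u_0\rangle$, so letting $u_0$ range over a basis requires knowing $\admMomentum_{\mathcal{E}}=0$ first, which must be quoted from the arbitrary-ends positivity statement (\cref{neater.than.bartnikchrusciel}); your sketch uses the full basis without securing this.

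Concerning the obstacle you flag --- persistence of parallelism in the limit over the exhaustion --- you correctly identify it as the technical crux but do not supply the mechanism. In the paper this is quantitative: equality $\admEnergy_{\mathcal{E}}=|\admMomentum_{\mathcal{E}}|$ turns the mass estimate into $\|\modnabla^{\psi_r}u_r\|^2_{\Lp^2(X_r)}\le C'/r$ as in \labelcref{eq:connection_with_potential_to_zero}, and then, on each fixed compact-exhaustion piece, the partial injectivity bound together with $\psi_r\le 4/r$ and the weighted Poincar\'e inequality (\cref{lem:weighted_poincare_modnabla}) shows the solutions $v_r$ form a Cauchy sequence in $\SobolevH^1_{-q}$, whose limit is exactly $\modnabla$-parallel (\cref{prop:parallel_spinors_exist}). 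Your appeal to ``decay through the shield at a rate comparable to the shield thickness'' gestures at this but would need to be made into such an estimate. For part (2) your route (parallel spinor with null asymptotics plus the extra decay, then the Beig--Chru\'sciel/Chru\'sciel--Maerten expansion) matches the paper, which feeds the pair $(N,Y)$ into the computations of \cref{rigidity.comp}; note also that rigidity requires completeness of $M$, which the paper assumes in \cref{rigidity.arb.ends} and which your discussion of incomplete $M$ sets aside.
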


In view of the so-called \textit{quantitative shielding theorem} in \cite{Lesourd-Unger-Yau:Positive_mass_ends,CZ2021-PositiveMass,LLU22a}, it is also natural to pose the following. 
\begin{conj}[Positive mass theorem for initial data sets with shields]\label{dec.shield.conj}
Let $(M^n,g,k)$ be an initial data set that contains at least one asymptotically flat end $\mathcal{E}$ and a dominant energy shield $U_0 \supset U_1 \supset U_2$ as in Definition \ref{dec.shield} such that \(\mathcal{E} \subset U_2\) with \(\overline{U_0 \setminus \mathcal{E}}\) compact. Then $\admEnergy_\mathcal{E}>|\admMomentum_\mathcal{E}|$. 
\end{conj}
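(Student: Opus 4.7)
The plan is to adapt the spinor-shield argument of \cite{CZ2021-PositiveMass} to the initial data setting, using the modified Witten-type Dirac operator introduced earlier in this paper, which incorporates an independent timelike direction in the spinor bundle. Concretely, one works on a bundle $\mathcal{S} = S_M \otimes \mathbb{C}^2$ with a modified connection $\widehat{\nabla}$ built from the Levi-Civita connection and Clifford multiplication by $k$, tailored so that the associated Dirac operator $\widehat{D}$ satisfies a Schr\"odinger--Lichnerowicz identity
\begin{equation*}
\widehat{D}^{\,2} = \widehat{\nabla}^{*}\widehat{\nabla} + \tfrac{1}{2}(\mu - \mathcal{J}),
\end{equation*}
where $\mathcal{J}$ is a symmetric bundle endomorphism of pointwise operator norm at most $|J|$. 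The curvature term is then pointwise nonnegative on the DEC region and has a strictly positive lower bound on the shield.

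Next, I would localize the problem to a relatively compact neighborhood $U$ containing a large truncation of $\mathcal{E}$ together with the dominant energy shield, and impose on the inner boundary of $U$ a self-adjoint boundary condition for $\widehat{D}$ of chirality or Callias type, as in \cite{CZ2021-PositiveMass}. Given any constant spinor $\phi_0$ on $\mathcal{E}$, one solves the asymptotic Dirichlet problem $\widehat{D}\psi = 0$ on $U$ with $\psi - \phi_0 \to 0$ in $\mathcal{E}$ subject to the prescribed inner boundary condition; in this setup, the shield plays the role that completeness plays in Witten's original argument, providing the positivity needed to close the Fredholm theory. Integrating the modified Lichnerowicz identity over $U$ then produces, on one side, the Witten boundary term at infinity contributing a positive multiple of $\langle \phi_0, (\admEnergy - \admMomentum \cdot \mathbf{e})\phi_0\rangle$ together with a shield-side boundary term whose sign is fixed by the chosen boundary condition, and on the other side the bulk quantity
\begin{equation*}
\int_U |\widehat{\nabla}\psi|^2 \;+\; \tfrac{1}{2}\int_U (\mu - \mathcal{J})\,|\psi|^2 \;\geq\; 0.
\end{equation*}
Unique continuation forces $\psi$ not to vanish identically on the shield, so the bulk term is \emph{strictly} positive; varying $\phi_0$ to extract the eigenvalues of the ADM endomorphism yields the strict inequality $\admEnergy_{\mathcal{E}} > |\admMomentum_{\mathcal{E}}|$.

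The principal obstacle is analytic: the $k$-dependent terms in $\widehat{\nabla}$ break the self-adjointness symmetry available in the Riemannian case, and one must verify that the modified Dirac operator on $U$ with the chosen boundary condition remains Fredholm, that the asymptotic Dirichlet problem is solvable for every prescribed $\phi_0$, and that in the resulting integration-by-parts identity every boundary contribution carries the sign required to conclude. A secondary difficulty is to reconstruct the precise boundary term at infinity in terms of $\admEnergy - \admMomentum \cdot \mathbf{e}$ from the new connection $\widehat{\nabla}$, which is delicate because the extra timelike factor in the bundle alters how the asymptotic parallel spinors pair against $k$. Once the modified Schr\"odinger--Lichnerowicz formula is in hand, the overall scheme is structurally parallel to \cite{CZ2021-PositiveMass}, but each step must be genuinely reconstituted in the initial data setting.
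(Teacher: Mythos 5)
Your plan is structurally in the right family (a Witten-type argument for the bundle with an extra timelike direction), but it misses the one idea that makes the shield hypotheses usable, and this is a genuine gap rather than a detail. In the paper's proof (\cref{dec.shield.pmt}; note the conjecture is only established there under a spin assumption on $U_0$ and compactness of $\overline{\mathcal{E}\setminus U_0}$, which your spinor argument would also need), the operator is not the bare Dirac--Witten operator but $\modDirac^\psi=\modDirac+\psi\epsilon_1$ with a Callias potential $\psi$ vanishing on $U_2$, ramping up across $U_1\setminus U_2$, and large near $\partial\bar U_0$. Conditions (2) and (3) of \cref{dec.shield} --- the strict surplus $\mu-|J|\ge\sigma n(n-1)$ and the quantitative constants $d$, $l$, $\Psi(d,l)$ --- are used precisely to construct such a $\psi$ with $\zeta^\psi\ge0$ in the bulk and $\eta^\psi=\tfrac{n-1}{2}\bigl(\mean+\tfrac2n\psi\bigr)-\lvert k(\nu,\cdot)|_{\T\partial\bar U_0}\rvert>0$ on the boundary, where $\eta^\psi$ is the boundary term produced by the $\chi_1$ boundary condition (\cref{rem:chi1boundary}, \labelcref{eq:theta_eta}). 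Your Schr\"odinger--Lichnerowicz identity contains no potential, and you hope that a ``chirality or Callias type boundary condition'' by itself fixes the sign of the shield-side boundary term. It cannot: condition (3) explicitly allows $\mean-\tfrac{2}{n-1}\lvert k(\nu,\cdot)\rvert$ to be negative (down to $-\Psi(d,l)$), the $\chi_0$ (chirality-type) boundary term is $\tfrac12\bigl((n-1)\mean-\tr_{\partial}(k)\bigr)\lvert u\rvert^2$, which the shield hypotheses do not control, and for $\chi_1$ the term is only nonnegative because the added $\tfrac{n-1}{n}\psi$ contribution dominates --- positivity that is paid for in the interior by the strict DEC surplus absorbing $\lvert\D\psi\rvert-\psi^2$ where the potential ramps up. An argument that never invokes $\sigma$, $d$, $l$, $\Psi(d,l)$ cannot prove a statement whose hypotheses are quantitative in exactly these constants.

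Two further points. First, the domain is off: a ``relatively compact neighborhood $U$ containing a large truncation of $\mathcal{E}$'' is incompatible with imposing the asymptotic condition $u-\phi_0\to0$ in $\mathcal{E}$; the paper works on the complete manifold $\bar U_0$ with compact boundary containing the whole end and solves $\modDirac^\psi v=-\modDirac^\psi u_{00}$ in the weighted space $\SobolevH^1_{-q}$ with boundary condition $\chi_1$, invertibility coming from a weighted Poincar\'e inequality (\cref{prop:partial_injectivity}), not from the shield ``playing the role of completeness'' --- completeness of $\bar U_0$ is part of \cref{dec.shield} and is still used. Second, your strictness step via unique continuation for the non-symmetric operator is an unproved extra ingredient; in the paper strictness follows because $\eta^\psi>0$ on $\partial\bar U_0$ and, if $\modnabla^\psi u=0$, the spinor is nowhere vanishing since it is asymptotic to a unit spinor. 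To repair your plan you would have to reinstate the potential in both the operator and the boundary term and construct $\psi$ from the shield data, which is exactly the paper's route.
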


\begin{defi}[{compare \cref{fig:dec.shield}}]\label{dec.shield}
Let $(M^n,g,k)$ be an initial data set, not assumed to be complete.
A \emph{dominant energy shield} is a nested collection of connected non-empty open subsets $U_0 \supset U_1 \supset U_2$ such that $U_0 \supset \overline U_1$, $U_1\supset \overline U_2$, the closure of $U_0$ in $(M,g)$ is a complete manifold with compact boundary,
and we have the following:
\begin{enumerate}
    \item $\mu-|J|\ge 0$ on $U_0$,
    \item $\mu-|J| \ge \sigma n(n-1)$ on $U_1\setminus U_2$ for some constant $\sigma>0$,
    \item the mean curvature $\mean_{\partial\bar U_0}$ on $\partial\bar U_0$ and the symmetric two tensor $k$ satisfy%
    \[\mean_{\partial\bar U_0} - \tfrac{1}{n-1}\left\lvert k(\nu, \blank)|_{\T \partial \bar U_0}\right\rvert>-\Psi(d,l),\]
    where \(\nu\) is the unit normal field along \(\partial U_0\) pointing towards \(U_0\) and $\Psi(d,l)$ is the constant defined as
    \[
    \Psi(d,l) \coloneqq \begin{cases}  
    \frac{2}{n} \frac{\lambda(d)}{1-l \lambda(d)} & \text{if \(d < \frac{\pi}{\sqrt{\sigma} n}\) and \(l < \frac{1}{\lambda(d)}\),} \\
    \infty & \text{otherwise,}
    \end{cases} 
    \]
    where \(d \coloneqq \dist_g(\partial U_2, \partial U_1)\), \(l \coloneqq \dist_g(\partial U_1, \partial U_0)\), and
    \[
    \lambda(d) \coloneqq \frac{\sqrt{\sigma} n}{2} \tan\left(\frac{ \sqrt{\sigma} n d}{2}\right).
    \]
    In case any of the boundaries are empty (this can only happen if \(U_0\) itself is complete), then the involved distances are taken to be \(+ \infty\).%
\end{enumerate}
\end{defi}

Note that our convention for the mean curvature of hypersurfaces inside \(M\) is the average trace of the second fundamental form, that is, \(\mean = \frac{1}{(n-1)}\tr \mathrm{II}\).
The sign convention is such that the \((n-1)\)-sphere viewed as the boundary of the \(n\)-ball has mean curvature \(+1\).%
\begin{figure}[ht]
  \includegraphics{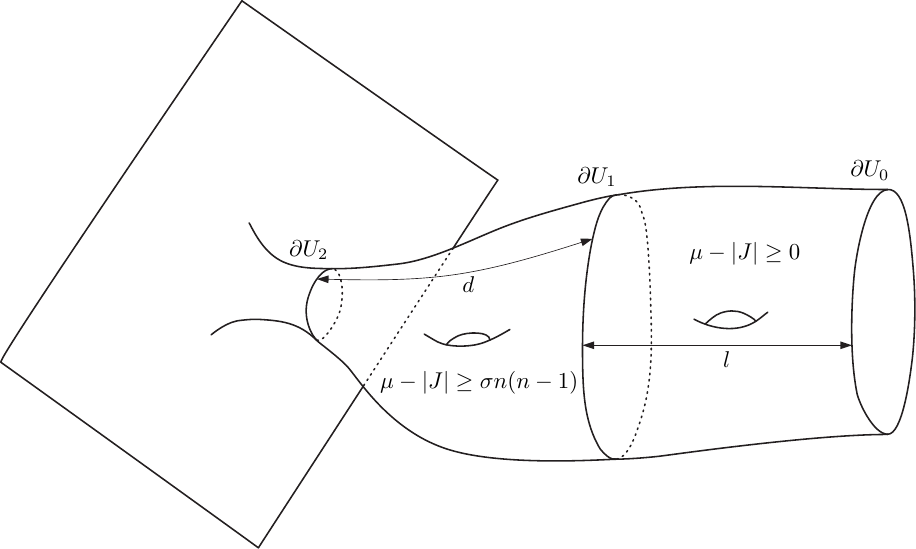}
  \caption{Dominant energy shield}
  \label{fig:dec.shield}
\end{figure}

\begin{rem}
 Observe that in (3) of Definition \ref{dec.shield}, the assumption on $k$ involves $|k(\nu,-)|_{\T\partial X}$, and not the trace $\textnormal{tr}_{\partial X}k$ which appears in the formula for the null mean curvature $\theta^+= (n-1)\mean + \tr_{\partial X}k$. 
 Naively, one might have expected (3) to involve $\theta^+$, since $\theta^+$ is the natural generalization of $\mean$.
 We impose this somewhat mysterious boundary condition because the term $|k(\nu,-)|_{\T\partial X}$ occurs as a natural bound for the boundary term that appears in our spinorial approach to the positive mass theorem with shields, see \cref{rem:chi1boundary}.
 This may be a technical artifact of our method.
However, it turns out that $|k(\nu,-)|_{\T\partial X}$ also figures in other treatments of initial data sets with boundary \cites[Definition~2.5]{almaraz:spacetimePMT_noncptbd}[Definition~2.3]{TinYau}, and, moreover, there is an indication that this term is to be expected from the perspective of the boundary Hamiltonian in general relativity (see the discussion in \cite{TinYau}).
\end{rem}

Here, using the aforementioned modification of the Callias operator method in \cite{Cecchini-Zeidler:ScalarMean}, we prove Conjecture \ref{dec.shield.conj} under a spin and compactness assumption.

\begin{thmx}\label{dec.shield.pmt}
Let $(M^{n\geq 3},g,k)$ be an initial data set that contains an asymptotically flat end $\mathcal{E}$ and a dominant energy shield as in \cref{dec.shield} such that \(\mathcal{E} \subset U_2\).
Assume that $ U_0$ is spin and that $\overline{U_0 \setminus \mathcal{E}}$ is compact. Then $\admEnergy_\mathcal{E}> |\admMomentum_\mathcal{E}|$. 
\end{thmx}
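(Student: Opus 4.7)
The plan is to extend the Callias-operator approach of Cecchini--Zeidler~\cite{CZ2021-PositiveMass} to the spacetime setting by replacing the Riemannian Dirac operator used there with a Dirac--Witten operator whose spinor bundle carries \emph{two} independent internal Clifford generators: the conventional timelike $e_0$ ($e_0^2=+1$, anticommuting with $\T X$), which is used to encode $k$ into the spin connection, and an \emph{additional} generator $\mathfrak{e}$ ($\mathfrak{e}^2=-1$, anticommuting with $\T X$ and with $e_0$), which will host the shielding potential. This is the ``additional independent timelike direction in the spinor bundle'' highlighted in the abstract.

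First, I would work on $X:=\overline{U_0}$, which by assumption is spin, complete, and has compact boundary. On the enlarged spinor bundle $S\to X$ I set
\begin{equation*}
\nabla^k_Y\psi := \nabla_Y\psi + \tfrac{1}{2}\,k(Y,\blank)\cdot e_0\cdot\psi, \qquad \mathcal{D} := D^k + \chi\cdot\mathfrak{e},
\end{equation*}
where $D^k := \sum_i e_i\cdot\nabla^k_{e_i}$ is the Dirac--Witten operator and $\chi\ge 0$ is a shielding potential modeled on~\cite{CZ2021-PositiveMass,Cecchini-Zeidler:ScalarMean}: compactly supported away from $\mathcal{E}$, with a radial profile in a collar of $\partial X$ whose Riccati-type ODE is governed by $d$, $l$, $\lambda(d)$, and $\Psi(d,l)$ from~\cref{dec.shield}. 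The crucial identity, which uses that $\mathfrak{e}$ anticommutes with $D^k$, is the Schrödinger--Lichnerowicz-type formula
\begin{equation*}
\mathcal{D}^2 = (\nabla^k)^*\nabla^k + \tfrac{1}{2}\bigl(\mu - J\cdot e_0\bigr) + \chi^2 + \grd\chi\cdot\mathfrak{e}.
\end{equation*}
The symmetric endomorphism $\mu - J\cdot e_0$ has eigenvalues $\mu\pm\lvert J\rvert$, so it is $\ge 0$ on $U_0$ by DEC and $\ge\sigma n(n-1)$ on the shield layer $U_1\setminus U_2$; this provides the room to absorb the skew term $\grd\chi\cdot\mathfrak{e}$ against $\chi^2$ plus the excess positivity on the shield. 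On $\partial X$ I impose a chiral, MIT-bag type boundary condition tuned so that the boundary integrand appearing after integration by parts is, up to a positive factor, $\mean_{\partial X} - \tfrac{2}{n-1}\left\lvert k(\nu,\blank)|_{\T\partial X}\right\rvert$; by~\cref{dec.shield}(3) this exceeds $-\Psi(d,l)$, and the boundary value of $\chi$ is chosen to exactly compensate this bound.

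Next, using the compactness of $\overline{\mathcal{E}\setminus U_0}$ together with the Parker--Taubes weighted Sobolev theory on $\mathcal{E}$~\cite{Parker-Taubes:Wittens_Proof} and the coercive estimate provided by $\chi$ on $X\setminus\mathcal{E}$, I solve $\mathcal{D}\psi=0$ on $X$ with the above boundary condition, $\psi$ asymptotic on $\mathcal{E}$ to a prescribed constant Witten spinor $\psi_0$ realizing the ADM energy--momentum at infinity. Plugging $\psi$ into the identity and integrating by parts yields
\begin{equation*}
\admEnergy - \langle\admMomentum,e_0\rangle = \int_X\bigl(\lvert\nabla^k\psi\rvert^2 + \tfrac{1}{2}\langle(\mu-J\cdot e_0)\psi,\psi\rangle + \chi^2\lvert\psi\rvert^2 + \langle\grd\chi\cdot\mathfrak{e}\,\psi,\psi\rangle\bigr) + B_{\partial X}(\psi),
\end{equation*}
with each term on the right non-negative by the previous step. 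Choosing the internal direction $e_0$ along $\admMomentum/\lvert\admMomentum\rvert$ gives $\admEnergy\ge\lvert\admMomentum\rvert$. Strictness is then obtained from a rigidity argument: if $\admEnergy=\lvert\admMomentum\rvert$, every term vanishes, so in particular $\nabla^k\psi\equiv 0$ and $(\mu-J\cdot e_0)\psi=0$; on $U_1\setminus U_2$ the strict bound $\mu-\lvert J\rvert\ge\sigma n(n-1)>0$ forces $\psi\equiv 0$ on the shield, and unique continuation for $\mathcal{D}$ propagates this to the asymptotically flat end, contradicting $\psi(\infty)=\psi_0\neq 0$.

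The main obstacle will be the algebraic bookkeeping in the Schrödinger--Lichnerowicz step: one must verify that the extra generator $\mathfrak{e}$ really can be slotted in so that it anticommutes with $D^k$ (which is itself built from $e_0$), that the square $\mathcal{D}^2$ produces exactly the displayed clean terms without any unwanted cross terms, and that the chiral boundary condition on $\partial X$ outputs the specific combination $\mean_{\partial X} - \tfrac{2}{n-1}\left\lvert k(\nu,\blank)|_{\T\partial X}\right\rvert$ instead of the more conventional null mean curvature $\theta^+ = \mean + \mathrm{tr}_{\partial X}k$; this last matching is the reason~\cref{dec.shield}(3) must be phrased with $\lvert k(\nu,\blank)\rvert$, as the authors themselves note in the remark following the definition.
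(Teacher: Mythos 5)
Your proposal follows essentially the same route as the paper: the same enlarged spinor bundle with two extra Clifford generators (the paper's \(\epsilon_0,\epsilon_1\) acting via the \((n,2)\)-signature bundle \(\T^\ast M\oplus\R^2\)), the same Dirac--Witten operator augmented by a Callias potential, the same \(\chi_1\)-type boundary condition which is precisely what produces the combination \(\mean-\tfrac{2}{n-1}\lvert k(\nu,\blank)|_{\T\partial \bar U_0}\rvert\), a potential built from the shield data as in Cecchini--Zeidler, and a Witten-type spinor asymptotic to a constant spinor \(u_{00}\) with \(\langle u_{00},\clm(\admMomentum_{\mathcal E})\epsilon_0 u_{00}\rangle_{\mathcal E_\infty}=-\lvert\admMomentum_{\mathcal E}\rvert\) fed into the integrated Schrödinger--Lichnerowicz identity. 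So the strategy is the paper's strategy.

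Two places, however, where your write-up as stated does not deliver the theorem. First, you write the Lichnerowicz formula relative to \((\nabla^k)^\ast\nabla^k\), so the potential enters the bulk and boundary terms with coefficient \(1\); the paper instead absorbs \(-\tfrac{\psi}{n}\clm(\xi)\epsilon_1\) into the connection and obtains \(\tfrac{n-1}{n}(\psi^2+\clm(\D\psi)\epsilon_1)\) in the bulk and \(\tfrac{n-1}{2}(\mean+\tfrac2n\psi)\) on the boundary, and the constants \(\lambda(d)\) and \(\Psi(d,l)\) in \cref{dec.shield} are calibrated to exactly this normalization (the paper's remark notes this is what replaces the Friedrich-inequality/Penrose-operator step of the Riemannian argument). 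With your coefficient-\(1\) version the Riccati analysis yields a different, in general weaker, admissible \(\Psi(d,l)\), so either you must also run the Friedrich/Penrose trick or you prove the statement only with a worse shield constant; as written the constants do not match the definition. (Relatedly, with \(\mathfrak e^2=-1\) the square \((\chi\mathfrak e)^2\) contributes \(-\chi^2\), not the \(+\chi^2\) you display; the conventions must be arranged so the potential squares in with a plus sign.) Second, the strictness step is too quick: the term \(\langle\grd\chi\cdot\mathfrak e\,\psi,\psi\rangle\) is not separately nonnegative, only the pointwise combination \(\tfrac12(\mu-\lvert J\rvert)+\chi^2-\lvert\grd\chi\rvert\) is \(\ge0\), and on the shield layer --- exactly where you want to conclude \((\mu-J\cdot e_0)\psi=0\) --- this combination may vanish identically because the DEC excess is being spent on \(\lvert\grd\chi\rvert\); so equality does not force the spinor to vanish there. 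The paper gets strict inequality instead from the strictly positive boundary weight \(\eta^\psi>0\) on \(\partial\bar U_0\): if the right-hand side were zero the solution would be \(\modnabla^\psi\)-parallel (and vanish on the boundary), hence vanish identically, contradicting \(\lvert u\rvert_{\mathcal E_\infty}=1\). Your rigidity step should be repaired along these lines.
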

The proof of Theorem \ref{dec.shield.pmt} also leads to the following. 
\begin{thmx}\label{thm:spacetime_pmt_localized}
  Let \((\mathcal{E},g,k)\) be an asymptotically flat initial data end of dimension \(n \geq 3\) such that \(\admEnergy_{\mathcal{E}} < \lvert\admMomentum_{\mathcal{E}}\rvert\).
  Then there exists a constant \(R = R(\mathcal{E},g,k)\) such that the following holds: If \((M,g,k)\) is an \(n\)-dimensional initial data set \parensup{without boundary} that contains \((\mathcal{E}, g, k)\) as an open subset and \(\Nbh = \Nbh_R(\mathcal{E}) \subseteq M\) denotes the open neighborhood of radius \(R\) around \(\mathcal{E}\) in \(M\), then at least one of the following conditions must be violated:
  \begin{myenuma}
      \item \(\overline{\Nbh}\) is \parensup{metrically} complete, \label{item:complete}
      \item \(\mu-|J|\geq 0\) on \(\mathcal{U}\), \label{item:DEC}
      \item \(\Nbh\) is spin. \label{item:spin}
  \end{myenuma}
  \end{thmx}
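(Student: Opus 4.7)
The plan is to derive \cref{thm:spacetime_pmt_localized} by running the same modified spinorial argument that underlies \cref{dec.shield.pmt}, this time with a Callias-type potential supported in the collar $\Nbh \setminus \mathcal{E}$. Equivalently, I would prove the contrapositive: assuming \textup{(a)}, \textup{(b)}, \textup{(c)} all hold on $\Nbh = \Nbh_R(\mathcal{E})$ for $R$ to be chosen, derive $\admEnergy_{\mathcal{E}} \geq \lvert\admMomentum_{\mathcal{E}}\rvert$, contradicting $\admEnergy_{\mathcal{E}} < \lvert\admMomentum_{\mathcal{E}}\rvert$.

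Take $U_0 = \Nbh$, let $U_2 \supseteq \mathcal{E}$ be a slight enlargement of the end, and let $U_1$ be nested between, so that $U_1 \setminus U_2$ is an annular region of width $d$ sitting in the collar $\Nbh \setminus \mathcal{E}$. Hypotheses \textup{(a)}, \textup{(b)}, \textup{(c)} directly supply the structural parts of a dominant energy shield as in \cref{dec.shield}: $\overline{U_0}$ is complete, $U_0$ is spin, and $\mu - \lvert J\rvert \geq 0$ on $U_0$; the asymptotic flatness of $\mathcal{E}$ together with completeness of $\overline{U_0}$ makes $\partial \overline{U_0}$ compact, and $\overline{\mathcal{E} \setminus U_0}$ is empty. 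The remaining ingredients of \cref{dec.shield} — the strict DEC bound $\mu - \lvert J\rvert \geq \sigma n(n-1)$ on $U_1 \setminus U_2$ and the mean-curvature bound on $\partial \overline{U_0}$ — are not supplied by the hypotheses, but in the proof of \cref{dec.shield.pmt} they enter only as positive contributions to a Schrödinger–Lichnerowicz identity. One replaces them by a Callias-type potential $\Phi$ on $\Nbh$ that vanishes on $U_2$, is of magnitude $\sim \sigma$ on $U_1 \setminus U_2$, and tapers to zero near $\partial \overline{U_0}$; the potential contributes the same positive terms as a genuine strict-DEC shield, while its taper across the outer collar of width $l$ absorbs the unknown mean-curvature contribution at the boundary. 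The required $R$ is determined by the geometry of $\mathcal{E}$ and the gap $\lvert\admMomentum_{\mathcal{E}}\rvert - \admEnergy_{\mathcal{E}}$.

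The main obstacle is the analytic construction of the Witten-type spinor $\psi$ on the non-complete domain $\Nbh$ using the modified connection with the extra independent timelike direction introduced in this paper. One must solve the associated Dirac equation with asymptotic boundary data at $\mathcal{E}$'s infinity that extracts $\admEnergy_{\mathcal{E}} - \lvert\admMomentum_{\mathcal{E}}\rvert$, while ensuring that the Callias potential $\Phi$ dominates all uncontrolled boundary terms at $\partial \overline{U_0}$ in the coercive estimate coming from the Schrödinger–Lichnerowicz identity. The existence of a valid $\Phi$ producing the required positivity is exactly what forces $R$ to exceed a quantitative threshold depending on the asymptotic data $(\mathcal{E}, g, k)$; once this threshold is achieved, the Witten boundary term at infinity supplies the inequality $\admEnergy_{\mathcal{E}} \geq \lvert\admMomentum_{\mathcal{E}}\rvert$, yielding the desired contradiction.
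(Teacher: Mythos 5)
Your overall frame---arguing by contrapositive, augmenting the Dirac--Witten operator by a Callias potential supported away from infinity, solving for a Witten spinor calibrated against \(\admMomentum_{\mathcal{E}}\), and letting the threshold \(R\) be dictated by the gap \(\lvert\admMomentum_{\mathcal{E}}\rvert-\admEnergy_{\mathcal{E}}\)---is indeed the paper's strategy. But your description of the potential is backwards at the outer boundary, and this is exactly where the argument would break. In the boundary term \(\eta^\psi=\tfrac{n-1}{2}\bigl(\mean+\tfrac{2}{n}\psi\bigr)-\lvert k(\nu,\blank)|_{\T\partial}\rvert\) the potential enters only through its \emph{value} on the boundary, with a positive sign; a potential that ``tapers to zero near \(\partial\overline{U_0}\)'' contributes nothing there, and the geometry of the boundary of the \(R\)-neighborhood (which depends on the unknown ambient \(M\), is not controlled by any hypothesis, and need not even be a smooth hypersurface) can make \(\mean\) arbitrarily negative. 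There is no mechanism by which a ``taper across the outer collar absorbs the mean-curvature contribution.'' The paper does the opposite: the potential is tiny (of size \(1/r\)) on a cut-off collar inside the end and then \emph{grows} like \(1/(r-t)\) across the width-\(r\) collar, subject only to the Riccati condition \(\psi^2-\lvert\D\psi\rvert\geq 0\) (which is all that \(\mu-\lvert J\rvert\geq 0\) permits), so that it is as large as needed on a smoothly chosen compact boundary \(\partial X_r\subset\Nbh\) to force \(\eta^{\psi_r}\geq 0\) there (\cref{lem:potential_exists}); passing to such a smooth codimension-zero \(X_r\) also sidesteps the regularity issue at \(\partial\overline{\Nbh}\). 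Note that your own ``main obstacle'' paragraph asks \(\Phi\) to dominate the boundary terms at \(\partial\overline{U_0}\), which contradicts the taper-to-zero profile you prescribe.

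The second gap is the cost of switching the potential on. You assert that \(\Phi\) ``contributes the same positive terms as a genuine strict-DEC shield,'' but in \(\zeta^\psi=\tfrac12(\mu-\lvert J\rvert)+\tfrac{n-1}{n}(\psi^2-\lvert\D\psi\rvert)\) the term \(-\lvert\D\psi\rvert\) is negative precisely where \(\psi\) rises from zero, and in \cref{thm:spacetime_pmt_localized} there is no surplus \(\sigma n(n-1)\) to pay for it (no \(\sigma\) appears in the hypotheses, so the magnitude ``\(\sim\sigma\)'' of your \(\Phi\) is unmotivated). A potential of fixed height rising over a fixed annulus produces an \(O(1)\) violation of \(\zeta^\psi\geq 0\) that does not shrink as \(R\to\infty\). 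The paper instead takes height \(1/r\) on the cut-off region and controls the resulting defect through the coercivity and isomorphism statement for \(\modDirac^{\psi_r}\) with the \(\chi_1\) boundary condition (\cref{prop:partial_injectivity,lem:isomorphism_in_setup}, resting on the weighted Poincar\'e inequality of the appendix), which bounds the local \(\Lp^2\)-norm of the solution independently of \(r\) and yields a total error \(C'/r\) that is beaten by the gap once \(r=R\) is large. Without this quantitative mechanism---small potential where it turns on, coercive control of the spinor there, and largeness of the potential at the outer boundary---your outline does not close; one could make a fixed small height \(h\lesssim\lvert\admMomentum_{\mathcal{E}}\rvert-\admEnergy_{\mathcal{E}}\) work with \(R\gtrsim 1/h\), but that is the same mechanism as the paper's, not the profile you describe.
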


Theorem \ref{thm:spacetime_pmt_localized} yields a new proof that \cref{dec.arbitrary.ends} holds in the spin setting:
  
\begin{thmx}\label{neater.than.bartnikchrusciel}
  Let $(M^{n \geq 3},g,k)$ be a complete initial data set that satisfies $\mu-|J|\geq 0$ and contains at least one distinguished asymptotically flat end $\mathcal{E}$. Assume that $M^n$ is spin. Then $\admEnergy_{\mathcal{E}}\geq \lvert \admMomentum_{\mathcal{E}}\rvert$.
\end{thmx}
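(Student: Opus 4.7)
The plan is to deduce \cref{neater.than.bartnikchrusciel} as an essentially immediate consequence of \cref{thm:spacetime_pmt_localized} by contradiction. Assume that $\admEnergy_{\mathcal{E}} < \lvert\admMomentum_{\mathcal{E}}\rvert$. Viewing $(\mathcal{E}, g\vert_{\mathcal{E}}, k\vert_{\mathcal{E}})$ as an asymptotically flat initial data end in its own right, I apply \cref{thm:spacetime_pmt_localized} to produce a constant $R = R(\mathcal{E}, g, k) > 0$ with the property that every initial data set containing $\mathcal{E}$ as an open subset must violate at least one of the conditions \textit{(a)}, \textit{(b)}, \textit{(c)} on the $R$-neighborhood $\Nbh = \Nbh_R(\mathcal{E}) \subseteq M$.

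Next I verify that the hypotheses of \cref{neater.than.bartnikchrusciel} actually force all three of these conditions to hold simultaneously. Condition \textit{(a)} holds because $(M, g)$ is assumed complete, so the closed subset $\overline{\Nbh} \subseteq M$ is metrically complete as a closed subset of a complete metric space. Condition \textit{(b)} holds because the dominant energy condition $\mu - \lvert J\rvert \geq 0$ is assumed on all of $M$, hence a fortiori on $\Nbh$. Condition \textit{(c)} holds because $M$ is spin and any open subset of a spin manifold inherits a spin structure by restriction. These three facts together directly contradict the conclusion of \cref{thm:spacetime_pmt_localized}, so $\admEnergy_{\mathcal{E}} \geq \lvert\admMomentum_{\mathcal{E}}\rvert$, as claimed.

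There is no substantive obstacle at this stage of the argument, since all the analytic work has already been encapsulated in \cref{thm:spacetime_pmt_localized}. The only point worth emphasizing is that the constant $R$ produced there depends only on data intrinsic to the end $\mathcal{E}$ itself, and not on the global ambient manifold $M$; this uniformity is precisely what allows the contradiction to be drawn against an arbitrary complete spin DEC-initial data set extending $\mathcal{E}$.
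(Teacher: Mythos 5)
Your proposal is correct and is essentially the paper's own route: the paper presents Theorem \ref{neater.than.bartnikchrusciel} precisely as the contrapositive consequence of Theorem \ref{thm:spacetime_pmt_localized}, with completeness, the global dominant energy condition, and the spin hypothesis on $M$ guaranteeing conditions (a), (b), (c) on every neighborhood $\Nbh_R(\mathcal{E})$, exactly as you argue. Your closing remark about the constant $R$ depending only on the end $\mathcal{E}$ is also the key uniformity point the paper relies on.
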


It is worth noting that \cref{dec.shield.pmt,thm:spacetime_pmt_localized,neater.than.bartnikchrusciel} can accommodate lower regularity. As in the recent work of Lee--Lesourd--Unger \cite{LLU22b}, we can allow for the presence of corners (reminiscent of Miao~\cite{M02} and Shi--Tam \cite{ST02}), and we can also allow for distributional curvature as in Lee--LeFloch \cite{LeeLeFloch15}. The required modifications are fairly standard so we do not pursue this here.

Finally, we show the following rigidity statement. 
\begin{thmx}\label{rigidity.arb.ends}
Let $(M^{n \geq 3},g,k)$ be a connected complete spin initial data set that satisfies $\mu-|J|\geq 0$ and that contains at least one distinguished asymptotically flat end $\mathcal{E}$. Then the following holds: 
\begin{myenuma}
    \item if $\admEnergy_{\mathcal{E}}=0$, then $(M^n,g,k)$ embeds as a spacelike hypersurface in Minkowski spacetime,\label{rigidity.arb.end:item1}
    \item if $(g,k)$ on $\mathcal{E}$ has extra decay as in \cref{extra.decay} and $\admEnergy_{\mathcal{E}}=|\admMomentum_{\mathcal{E}}|$, then $\admEnergy_{\mathcal{E}}=0$.\label{rigidity.arb.end:item2}
\end{myenuma}
\end{thmx}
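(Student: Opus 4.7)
For \cref{rigidity.arb.end:item1}, first observe that \cref{neater.than.bartnikchrusciel} applied to $\mathcal{E}$ gives $\admEnergy_{\mathcal{E}} \geq \lvert\admMomentum_{\mathcal{E}}\rvert$, so the hypothesis $\admEnergy_{\mathcal{E}} = 0$ forces $\admMomentum_{\mathcal{E}} = 0$ as well. The plan is then to revisit the Witten-type spinor argument underlying the proofs of \cref{dec.shield.pmt} and \cref{thm:spacetime_pmt_localized} --- which works on the doubled spinor bundle incorporating an independent timelike direction --- and extract parallel sections in the equality case. The proof of \cref{neater.than.bartnikchrusciel} via \cref{thm:spacetime_pmt_localized} produces, on an exhaustion of $M$ by shielded subsets with shield receding to infinity, spinor fields solving the modified Dirac equation and matching a prescribed asymptotically constant profile on $\mathcal{E}$. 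In the equality case $\admEnergy_{\mathcal{E}} = \lvert\admMomentum_{\mathcal{E}}\rvert$, the modified Schrödinger--Lichnerowicz identity from the paper degenerates and forces these spinors to be parallel with respect to the modified Witten-type connection. Varying the asymptotic profile produces, after a diagonal subsequence, a family of linearly independent parallel sections on $(M,g,k)$; their bilinear pairings yield functions on $M$ whose differentials are determined by $g$ and $k$, and the standard construction from the AF spin rigidity literature (Parker--Taubes \cite{Parker-Taubes:Wittens_Proof}, Beig--Chru\'{s}ciel \cite{BC96}, Chru\'{s}ciel--Maerten \cite{CM05}) then yields the isometric embedding of $(M,g,k)$ into $\mathbb{R}^{1,n}$ as a spacelike hypersurface with second fundamental form $k$.

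For \cref{rigidity.arb.end:item2}, by (a) it suffices to show $\admEnergy_{\mathcal{E}} = 0$. Under the hypothesis $\admEnergy_{\mathcal{E}} = \lvert\admMomentum_{\mathcal{E}}\rvert$, the same exhaustion procedure produces a parallel spinor on $M$ with an arbitrary prescribed constant profile at infinity on $\mathcal{E}$. The extra decay provides the refined asymptotic expansion on $\mathcal{E}$ needed to perform the Chru\'{s}ciel--Maerten analysis of imaginary Killing spinors: integration by parts against the ADM energy--momentum surface integrand, combined with matching of subleading terms in the expansion, shows that the existence of such a parallel spinor for \emph{every} prescribed asymptotic profile is incompatible with $\admMomentum_{\mathcal{E}} \neq 0$. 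Hence $\admMomentum_{\mathcal{E}} = 0$, so $\admEnergy_{\mathcal{E}} = 0$, and (a) then yields the Minkowski embedding.

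The main obstacle is producing a globally defined parallel spinor on the possibly incomplete, arbitrary-ended manifold $(M,g,k)$ from the spinor solutions on shielded subsets. This requires uniform estimates on the approximating spinors as the shield retreats to infinity and a diagonal extraction argument in suitable weighted spaces on $\mathcal{E}$ (to preserve the prescribed asymptotic profile) and locally elsewhere on $M$. The presence of $k$ and the use of the doubled bundle make the relevant weighted elliptic estimates more delicate than in the Riemannian analogue treated in \cite{CZ2021-PositiveMass}, but the modified Schrödinger--Lichnerowicz formula developed in this paper should supply the necessary coercivity. Once a global parallel spinor with controlled asymptotics is at hand, both the Minkowski embedding of (a) and the asymptotic analysis of (b) follow along standard lines from the AF spin theory.
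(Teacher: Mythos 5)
Your treatment of part \labelcref{rigidity.arb.end:item1} is essentially the paper's argument: since $\admEnergy_{\mathcal{E}}=0$ forces $\admMomentum_{\mathcal{E}}=0$ via \cref{neater.than.bartnikchrusciel}, the exhaustion argument behind \cref{thm:spacetime_pmt_localized} (with the weighted Poincaré inequality supplying the coercivity for the limit $r\to\infty$) yields $\modnabla$-parallel spinors with \emph{arbitrary} asymptotic profile, exactly as in \cref{prop:parallel_spinors_exist}; the spinor bilinears $(N,Y)$ and the Killing development of Beig--Chru\'sciel/Chru\'sciel--Maerten then give the embedding, which is what the paper does (together with the flatness of the Killing development from the parallel frame, geodesic completeness from $N^2-|Y|^2\equiv 1$, and the identification with Minkowski space). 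Your sketch is coarser on that last identification step, but the route is the same.

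Part \labelcref{rigidity.arb.end:item2}, however, contains a genuine gap. You claim that under $\admEnergy_{\mathcal{E}}=|\admMomentum_{\mathcal{E}}|$ the exhaustion produces a parallel spinor ``with an arbitrary prescribed constant profile at infinity,'' and you base the contradiction with $\admMomentum_{\mathcal{E}}\neq 0$ on having this full family. This is not available: in the Witten-type identity (\cref{lem:energy_momenum_estimate}) the term at infinity is $\frac{1}{2}(n-1)\omega_{n-1}\bigl(\admEnergy_{\mathcal{E}}|u_0|^2+\langle u_0,\clm(\admMomentum_{\mathcal{E}})\epsilon_0 u_0\rangle\bigr)$, which degenerates to zero only for profiles aligned with the momentum, i.e.\ those with $\langle u_0,\clm(\admMomentum_{\mathcal{E}})\epsilon_0 u_0\rangle=-|\admMomentum_{\mathcal{E}}||u_0|^2$; for any other profile it is strictly positive when $\admMomentum_{\mathcal{E}}\neq 0$, and the estimate no longer forces $\|\modnabla^{\psi_r}u_r\|_{\Lp^2}\to 0$, so no parallel spinor is produced. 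This is precisely why \cref{prop:parallel_spinors_exist}~\labelcref{item:parallel_spinor_E_equals_P} yields only the single aligned parallel spinor in the case $\admEnergy_{\mathcal{E}}=|\admMomentum_{\mathcal{E}}|$, in contrast to part \labelcref{item:parallel_spinor_E_equals_0}. Consequently your proposed mechanism (``incompatibility of parallel spinors for every profile with $\admMomentum_{\mathcal{E}}\neq 0$'') does not get off the ground.

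The paper closes case \labelcref{rigidity.arb.end:item2} differently: from the one aligned parallel spinor it forms the lapse--shift pair $(N,Y)$ as in \labelcref{defining.pair}, uses \cref{lem:vanishing_derivative} together with the asymptotics $N\to 1$, $|Y|\to 1$ to get the null relation $N^2=|Y|^2$ everywhere, then follows the Chru\'sciel--Maerten computation to obtain $N^2\tau_{ij}=\mu Y_iY_j$ and to verify, using the extra decay of \cref{extra.decay}, the hypotheses \labelcref{killing.condition} and \labelcref{decay.on.tau} of \cref{rigidity.comp}; that lemma (CM05, Theorem~2.5) then gives $(\admEnergy_{\mathcal{E}},\admMomentum_{\mathcal{E}})=(0,0)$, contradicting the assumption $\admMomentum_{\mathcal{E}}\neq 0$. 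If you want to salvage your write-up, replace the ``arbitrary profile'' step by this single-spinor Killing-development analysis; the extra decay is what makes the asymptotic (KIDs-type) argument of \cref{rigidity.comp} applicable, not a matching of subleading terms against the ADM surface integrand.
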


The paper is organized as follows. In \cref{defs} we introduce the basic definitions and notation. In Section \ref{sec:diracwitten_potential} we introduce the relevant spin bundle and compute some of the key formulae that underlie the remaining arguments. In Section \ref{mainfirstproofs} we prove Theorems \ref{dec.shield.pmt} and \ref{thm:spacetime_pmt_localized}. Finally in Section \ref{proof.of.rigidity} we prove the rigidity Theorem \ref{rigidity.arb.ends}. Useful weighted Poincar\'{e} inequalities for manifolds with boundary are included in Appendix \ref{weightedpoincare}. 

\subsection*{Acknowledgements} The authors thank Carla Cederbaum, Demetre Kazaras, Dan Lee, and Ryan Unger for insightful discussions.
They also thank the referee for carefully reading the manuscript and for many useful suggestions.

\section{Notations and Definitions}\label{defs}
Let a nonnegative integer $k$ and a constant $\tau\in\R$ be fixed.
Let $\Disk_d(0)\subset\R^n$ be the closed disc of radius \(d\) around the origin.
For a function $f$ on $\R^n\setminus\Disk_d(0)$, we write $f=\bigO_k(|x|^{-\tau})$ if the function $|x|^{\tau+\left|\beta\right|}|\partial^\beta f(x)|$ is bounded for each multi-index $0 \leq \left|\beta\right|\leq k$.
Moreover, for $\lambda\in (0,1)$, we write $f=\bigO_{k+\lambda}(|x|^{-\tau})$ if $f=\bigO_k(|x|^{-\tau})$ and the function
\[
    |x|^{\lambda+|\beta|+\tau}\frac{|\partial^\beta f(x)-\partial^\beta(y)|}{|x-y|^\lambda},\qquad x,y\in\R^n\setminus\Disk_d(0),\,0\leq|x-y|\leq|x|/2
\]
is bounded for every multi-index $|\beta|=k$.

\begin{defi}\label{AFdata}
Let $(X^n,g,k)$ be an initial data set.
We say that an open subset $\mathcal E\subset X$ is an \emph{asymptotically flat end} of order $\tau>\frac{n-2}{2}$ if $(g,k)$ is locally $\Ct^{2,\alpha}\times \Ct^{1,\alpha}$ for some $0<\alpha<1$, $\mu$ and $J$ are integrable in $\mathcal E$, and there exists a diffeomorphism \(\Phi \colon \mathcal{E} \xrightarrow{\cong} \R^n \setminus \Disk_{d}(0)\) for some \(d > 0\) such that, with respect to the induced coordinate chart $x=(x^1,\ldots,x^n)$, we have
\[
    g_{ij}-\delta_{ij}=\bigO_2(|x|^{-\tau})\ ,\qquad
    k_{ij}=\bigO_1(|x|^{-\tau-1})
\]
for all \(1 \leq i,j \leq n\).
We say that an initial data set $(X,g,k)$ is asymptotically flat if there exists a bounded set $K\subset X$ such that $X\setminus K$ is a non-empty disjoint union of finitely many asymptotically flat ends.%
\end{defi}

Following the convention of \cite[\S7.3.2]{Lee2019-GeometricRelativity}, we define the \emph{ADM energy-momentum} $(\admEnergy_\mathcal E,\admMomentum_\mathcal E)$ of an asymptotically flat end $\mathcal E$ as
\begin{align*}
    \admEnergy_\mathcal E\coloneqq&\lim_{r\to\infty}\frac{1}{2(n-1)\omega_{n-1}}\int_{S_r^{n-1}}\sum_{i,j=1}^n\left(\partial_ig_{ij}-\partial_jg_{ii}\right)\nu^j\,\D{}\bar S\\
    \left(\admMomentum_\mathcal E\right)_i\coloneqq&\lim_{r\to\infty}\frac{1}{(n-1)\omega_{n-1}}\int_{S_r^{n-1}}\sum_{j=1}^n\left(k_{ij}-\left(\tr_gk\right)g_{ij}\right)\nu^j\,\D{}\bar S    
\end{align*}
for $i=1,\ldots,n$, where $\omega_{n-1}$ is the volume of the $(n-1)$-dimensional unit sphere, $\Sphere_r^{n-1}$ is the $(n-1)$-sphere of radius $r$ with respect to the chosen asymptotically flat coordinates $x=(x^1,\ldots,x^n)$, $\nu^j=x^j/|x|$, and $\D{}\bar S$ is the volume element on $\Sphere_r^{n-1}$ with respect to the background Euclidean metric.
The numbers \((\admMomentum_{\mathcal{E}})_i\) are understood to represent the covector at infinity \(\admMomentum_{\mathcal{E}} = \sum_{i=1}^n (\admMomentum_{\mathcal{E}})_i\ \D{x^i}\) in \(\mathcal{E}\) in the coordinate chart \((x^1, \dotsc, x^n)\).
 
For Part \ref{rigidity.arb.end:item2} of \cref{rigidity.arb.ends}, we will use the following extra decay assumption. 
\begin{defi}\label{extra.decay}
Let $R>0$ and let $(g,k)$ be initial data on $\mathcal{E}\simeq\mathbb{R}^n\backslash \Disk_R(0)$.
We say that $(g,k)$ has \textit{extra decay} if
\begin{equation*}
    g_{ij}-\delta_{ij}=\bigO_{3+\lambda}(|x|^{-\alpha})\:\:,\:\: k_{ij}=\bigO_{2+\lambda}(|x|^{-1-\alpha})
\end{equation*}
\begin{equation*}
    \alpha>\  \begin{cases}
    \quad \frac{1}{2}&  \quad n=3\\
   \quad n-3  & \quad n\geq 4
\end{cases} \quad ,\quad \epsilon>0\quad,\quad 0<\lambda<1
\end{equation*}
\begin{equation*}\label{decay.on.mu.intro}
\mu=\bigO_{1+\lambda}(|x|^{-n-\epsilon})    
\end{equation*}
\end{defi}
Let $(X,g,k)$ be an $n$-dimensional asymptotically flat initial data set with compact boundary.
Let $\rho$ be a smooth positive function such that \(\rho=|x|\) outside a disk in each asymptotically flat end with respect to some asymptotically flat coordinate chart.
Moreover, we assume that \(\rho\) remains uniformly bounded away from \(0\) and \(\infty\) outside of the asymptotically flat ends.
Let $(E,\nabla)$ be a Hermitian vector bundle with metric connection on \(X\).
For $p\geq 1$ and $\delta\in\R$, we define the \emph{weighted Lebesgue space} $\Lp^p_{\delta}(X,E)$ as the space of sections $u\in \Lp^p_\loc(X,E)$ such that the weighted norm
\begin{equation*}
    \|u\|_{\Lp^p_\delta(X,E)}\coloneqq
    \begin{cases}
    \left(\int_X|u|^p\rho^{-\delta p-n} \dV\right)^{1/p} & p < \infty, \\
    \operatorname{ess\ sup}_{x \in X} \lvert u(x) \rvert \rho(x)^{-\delta} & p=\infty,
    \end{cases}
\end{equation*}
is finite.
For a nonnegative integer $k$, we define the \emph{weighted Sobolev space} $\SobolevW^{k,p}_\delta(X,E)$ as the space of sections $u\in \SobolevW^{k,p}_\loc(X,E)$ such that the weighted Sobolev norm
\begin{equation*}
    \|u\|_{\SobolevW^{k,p}_\delta(X,E)}
    \coloneqq\sum_{i=0}^k\|\nabla^iu\|_{\Lp^p_{\delta-i}(X,E)}
\end{equation*}
is finite.
When \(p = 2\), we use the usual notation \(\SobolevH^k_\delta(X,E) \coloneqq \SobolevW^{2,k}_\delta(X,E)\).

\section{Dirac--Witten operators with potential}\label{sec:diracwitten_potential}
We consider the following setup: Let \((X,g,k)\) be an initial data set possibly with boundary, and let $\psi$ be a potential, that is, a Lipschitz function \(\psi \colon X \to \R\).%
We consider the vector bundle \(\T^\ast X \oplus \R^2\), where we endow \(\R^2 = \langle y_0, y_1\rangle\) with the negative definite bundle metric \(-\D y_0^2 - \D y_1^2\) so that \(\T^\ast X \oplus \R^2\) is endowed with a bundle metric of signature \((n,2)\).
We assume furthermore that \(X\) is a spin manifold and let \(S \to X\) denote the spinor bundle associated to \(\T^\ast X \oplus \R^2\) with respect to some faithful representation \(\Cl_{n,2} \hookrightarrow \End(\Sigma)\). 
Let \(\epsilon_i = \clm(y_i) \colon S \to S\) be the Clifford action of \(y_i\).
We define modified connections on \(S\) by the following formulas
\begin{align*}
    \modnabla_\xi  &\coloneqq \nabla_\xi + \frac{1}{2} \clm(k(\xi, \blank)) \epsilon_0, \\ 
  \modnabla_\xi^\psi  &\coloneqq \nabla_\xi + \frac{1}{2} \clm(k(\xi, \blank)) \epsilon_0 - \frac{\psi}{n}\clm(\xi) \epsilon_1 = \modnabla_\xi  - \frac{\psi}{n}\clm(\xi) \epsilon_1, 
\end{align*}
and consider the associated Dirac operators
\begin{align*}
    \modDirac &\coloneqq \sum_{i=1}^n \clm(e^i) \modnabla_{e_i} = \Dirac - \frac{1}{2} \tr(k) \epsilon_0, \\
    \modDirac^\psi &\coloneqq \sum_{i=1}^n \clm(e^i) \modnabla_{e_i}^\psi = \Dirac - \frac{1}{2} \tr(k) \epsilon_0 + \psi \epsilon_1 = \modDirac + \psi \epsilon_1,
\end{align*}
where \(\Dirac = \sum_{i=1}^n \clm(e^i) \nabla_{e_i}\) is the unmodified Dirac operator.
The operator \(\modDirac\) is the usual Dirac--Witten~\cite{Witten:positive_mass} operator of the initial data set \((X,g,k)\) and \(\modDirac^\psi\) is the Dirac--Witten operator augmented with a Callias potential (the \(\psi \epsilon_1\) part).

\begin{rem}\label{rem:codim_2}
    The modified objects \(\modnabla\) and \(\modDirac\) behave as the restrictions of the corresponding data from a hypothetical spacetime in which the initial data set is embedded.
    In a similar vein, the connection \(\modnabla^\psi\) and the corresponding Dirac operator \(\modDirac^\psi\) may be thought of as arising from \(X\) being embedded as a spacelike submanifold of codimension \emph{two} with trivial normal bundle and second fundamental form \(k \oplus -\frac{2 \psi}{n} g\) inside a pseudo-Riemannian manifold of signature \((n,2)\).
    However, from the point of view of our applications only the first timelike normal dimension is of physical and geometric relevance, whereas the second additional one involving the potential \(\psi\) is only used as a technical tool.
\end{rem}

Direct calculation and the Schrödinger--Lichnerowicz formula for \(\mathcal{D}\) (see e.g.~\cite[Proposition~2.5]{GromovLawson:PSCDiracComplete}) yield the following formula:
\begin{prop}[Schrödinger--Lichnerowicz formula]\label{prop:Schroedinger-Lichnerowicz}
\[
(\modDirac^\psi)^2 = (\modnabla^\psi)^\ast \modnabla^\psi
+ \frac{1}{4}\left(\scal + \lvert\tr(k)\rvert^2 - \lvert k \rvert^2\right)
+ \frac{1}{2}\clm(\divg(k) - \D\ {\tr(k)})\epsilon_0
+ \frac{n-1}{n}(\psi^2 + \clm(\D{\psi}) \epsilon_1).
\]
\end{prop}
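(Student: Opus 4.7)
My approach is to derive the identity by expressing both sides as perturbations of the classical objects $\Dirac^2$ and $\nabla^* \nabla$, and invoking the standard Schrödinger--Lichnerowicz formula $\Dirac^2 = \nabla^* \nabla + \tfrac14 \scal$ from \cite[Proposition~2.5]{GromovLawson:PSCDiracComplete}. Setting $A_\xi \coloneqq \tfrac12 \clm(k(\xi, \blank))\epsilon_0 - \tfrac{\psi}{n}\clm(\xi)\epsilon_1$ and $B \coloneqq -\tfrac12 \tr(k)\epsilon_0 + \psi \epsilon_1$, we have $\modnabla^\psi = \nabla + A$ and $\modDirac^\psi = \Dirac + B$. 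Throughout, I use that $\epsilon_0, \epsilon_1$ anticommute with Clifford multiplication $\clm(\xi)$ by cotangent vectors and with each other, and that $\epsilon_0^2 = \epsilon_1^2 = 1$ (since $y_0, y_1$ are timelike of unit norm).

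\textbf{Two key identities.} Using $\Dirac \epsilon_j = -\epsilon_j \Dirac$ (because $\epsilon_j$ is parallel and anticommutes with $\clm(e^i)$), the Leibniz rule for the scalar factors $\tr(k)$ and $\psi$, and $\epsilon_0 \epsilon_1 + \epsilon_1 \epsilon_0 = 0$, a direct expansion of $(\Dirac + B)^2$ gives
\[
(\modDirac^\psi)^2 = \Dirac^2 - \tfrac{1}{2}\clm(\D \tr k)\epsilon_0 + \clm(\D\psi)\epsilon_1 + \tfrac{1}{4}|\tr k|^2 + \psi^2.
\]
Since $\clm(\xi)^* = -\clm(\xi)$ for spacelike $\xi$ while $\epsilon_j^* = \epsilon_j$ for timelike $\epsilon_j$, the endomorphism $A_\xi$ is Hermitian self-adjoint; the Bochner identity for such a perturbation therefore yields, in a local orthonormal normal frame,
\[
(\modnabla^\psi)^*\modnabla^\psi = \nabla^*\nabla - \sum_i \nabla_{e_i} A_{e_i} + \sum_i A_{e_i}^2,
\]
with all first-order cross terms cancelling automatically. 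Using parallelism of $\clm$ with respect to $\nabla$, the identity $\sum_i \clm(e_i)\clm(k(e_i, \blank)) = -\tr(k)$ (a consequence of symmetry of $k$ and the Clifford relation), and $(\divg k)(X) = \sum_i (\nabla_{e_i} k)(e_i, X)$, direct calculation gives
\[
-\sum_i \nabla_{e_i}A_{e_i} = -\tfrac{1}{2}\clm(\divg k)\epsilon_0 + \tfrac{1}{n}\clm(\D\psi)\epsilon_1, \qquad \sum_i A_{e_i}^2 = \tfrac{1}{4}|k|^2 + \tfrac{\psi^2}{n},
\]
the mixed $\epsilon_0 \epsilon_1$ cross term in the last sum vanishing again by symmetry of $k$.

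\textbf{Assembly and main obstacle.} Subtracting the two identities above and invoking $\Dirac^2 - \nabla^*\nabla = \tfrac{1}{4}\scal$, the zeroth-order contributions regroup exactly into the three blocks of the proposition; the factor $\tfrac{n-1}{n} = 1 - \tfrac{1}{n}$ arises from combining the $\psi$-terms of both sides, while the $|k|^2$-term acquires its minus sign from the subtraction. The main subtlety is that $\modnabla^\psi$ is not a metric connection on $S$: because the perturbation $A$ is Hermitian self-adjoint rather than skew, the formal adjoint $(\modnabla^\psi)^*$ is not the naive negative of $\modnabla^\psi$, and the Bochner identity must be derived via the dual connection $\nabla - A$ in order for the first-order cross terms to cancel. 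Once this point is settled, the remainder is systematic Clifford bookkeeping with anticommuting generators.
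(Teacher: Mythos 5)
Your proof is correct: the decomposition into the anticommutator $\{\Dirac,\,-\tfrac12\tr(k)\epsilon_0+\psi\epsilon_1\}$ plus its square, the Bochner identity for the Hermitian (non-metric) perturbation $A$ with the first-order terms cancelling, and the final assembly via the classical formula $\Dirac^2=\nabla^\ast\nabla+\tfrac14\scal$ all check out, including the vanishing of the mixed $\epsilon_0\epsilon_1$ terms by symmetry of $k$ and the $\tfrac{n-1}{n}$ normalization. This is exactly the ``direct calculation'' the paper invokes (it gives no further details), so your argument coincides with the intended proof.
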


\begin{rem}
    The \emph{energy density} \(\mu = \frac{1}{2}(\scal + \tr(k)^2 - \lvert k \rvert^2)\) and \emph{momentum density} \(J = \divg(k) - \D\, {\tr(k)}\), see~\cite[Definition~7.16]{Lee2019-GeometricRelativity}, both appear in the formula above.
    It follows that, on sections compactly supported in the interior, we obtain the estimate
    \[
        (\modDirac^\psi)^2 \geq \frac{1}{2}(\mu - \lvert J \rvert) + \frac{n-1}{n}(\psi^2 - \lvert \D{\psi} \rvert).
    \]
    Note that, by definition, the \emph{dominant energy condition (DEC)} says that \(\mu - |J| \geq 0\).
\end{rem}

\begin{rem}
    This already includes the right normalization \({(n-1)}/{n}\) for the terms corresponding to the potential and no use of the Friedrich inequality and Penrose operator as in ~\cite{Cecchini-Zeidler:ScalarMean} is required.
    This is because we write the formula using the modified connection \(\modnabla^\psi\).
\end{rem}

Integrating \cref{prop:Schroedinger-Lichnerowicz} and partial integration yields:

\begin{prop} \label{prop:spectral_estimate}
For every compactly supported smooth section \(u\),%
\begin{align*}
    \int_X \lvert \modDirac^\psi u \rvert^2 \dV = &\int_X \left( \lvert \modnabla^\psi u \rvert^2 + \frac{1}{4}\left(\scal + \lvert\tr(k)\rvert^2 - \lvert k \rvert^2\right) \lvert u \rvert^2 \right) \dV \\
    &+ \int_X  \left( \frac{1}{2} \langle u, \clm(\divg(k) - \D\ {\tr(k)})\epsilon_0 u \rangle + \frac{n-1}{n}\langle u, (\psi^2 + \clm(\D{\psi}) \epsilon_1) u \rangle \right) \dV \\
    &+ \int_{\partial X} \left( \tfrac{n-1}{2} \mean \lvert u \rvert^2 - \langle u,  \mathcal{D}^\partial u \rangle \right) \dS \\ 
    &+ \int_{\partial X} \left( \frac{1}{2} \langle u, (\clm(k(\nu, \blank)) - \tr(k) \clm(\nu^\flat)) \epsilon_0 u \rangle + \frac{n-1}{n} \langle u, \psi \clm(\nu^\flat) \epsilon_1 u \rangle \right) \dS.
\end{align*}
Here \(\mean\) denotes the mean curvature of the boundary, \(\Dirac^\partial\) the canonical boundary Dirac operator, and \(\nu\) the interior normal field \embparen{compare~\textup{\cite[\S 2]{Cecchini-Zeidler:ScalarMean}} for the conventions}.
\end{prop}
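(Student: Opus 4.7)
The plan is to derive the stated formula by integrating the pointwise Schrödinger--Lichnerowicz identity of \cref{prop:Schroedinger-Lichnerowicz} against $u$ and performing two integrations by parts: one for the Dirac operator $\modDirac^\psi$ and one for the twisted connection Laplacian $(\modnabla^\psi)^\ast\modnabla^\psi$.

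First I rewrite
\[
\int_M|\modDirac^\psi u|^2\dV = \int_M\langle u,(\modDirac^\psi)^2 u\rangle\dV + \int_{\partial M}\langle u,\clm(\nu^\flat)\modDirac^\psi u\rangle\dS,
\]
which is the usual Green identity for Dirac-type operators: since $\modDirac^\psi$ differs from the genuine Dirac operator $\Dirac$ only by the pointwise endomorphism $-\tfrac12\tr(k)\epsilon_0 + \psi\epsilon_1$, which is formally self-adjoint for the Hermitian pairing on $S$, the standard boundary formula for $\Dirac$ applies verbatim. Next, I substitute \cref{prop:Schroedinger-Lichnerowicz} for $(\modDirac^\psi)^2u$: the curvature and Clifford zeroth-order terms produce directly the interior integrands displayed in the statement, while the connection-Laplacian term is integrated by parts via
\[
\int_M\langle u,(\modnabla^\psi)^\ast\modnabla^\psi u\rangle\dV = \int_M|\modnabla^\psi u|^2\dV - \int_{\partial M}\langle u,\modnabla^\psi_\nu u\rangle\dS,
\]
where $\nu$ is the interior unit normal. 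This yields all the interior contributions of the claim plus two boundary integrals that still need to be combined.

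Finally, I unpack these two boundary terms using the orthogonal decomposition of a Dirac-type operator along $\partial M$,
\[
\modDirac^\psi u = \clm(\nu^\flat)\modnabla^\psi_\nu u + \sum_{i=1}^{n-1}\clm(e^i)\modnabla^\psi_{e_i} u,
\]
for a local orthonormal frame $\{e_i\}_{i=1}^{n-1}$ of $\partial M$. The $\clm(\nu^\flat)\modnabla^\psi_\nu u$ piece in the Dirac boundary term cancels the normal-derivative boundary term from the connection Laplacian up to a mean-curvature correction $\tfrac{n-1}{2}\mean|u|^2$, in the classical fashion familiar from Witten's integration formula. The tangential part then reproduces the canonical boundary Dirac operator $\Dirac^\partial$, while the modification $\modnabla^\psi_\xi - \nabla_\xi = \tfrac12\clm(k(\xi,\blank))\epsilon_0 - \tfrac{\psi}{n}\clm(\xi)\epsilon_1$ produces, after summing over the frame and pairing with $\clm(\nu^\flat)$, precisely the additional $\epsilon_0$- and $\epsilon_1$-boundary algebraic terms displayed in the statement; note in particular that on $\partial M$ one has $\sum_{i=1}^{n-1}\clm(e^i)\clm(k(e_i,\blank)) + \clm(\nu^\flat)\clm(k(\nu,\blank)) = -\tr(k)$ in the Clifford algebra, which is how the combination $(\clm(k(\nu,\blank)) - \tr(k)\clm(\nu^\flat))\epsilon_0$ appears. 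The main obstacle is precisely this last Clifford-algebraic bookkeeping — ensuring that the two independent timelike generators $\epsilon_0,\epsilon_1$ produce the exact signs and the factor $(n-1)/n$ demanded by the statement — but the rest is a mechanical application of the standard Green-type identities, completely parallel to the Riemannian computation in~\cite{Cecchini-Zeidler:ScalarMean}.
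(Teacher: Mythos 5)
Your overall route is exactly the paper's: the paper proves this proposition by integrating \cref{prop:Schroedinger-Lichnerowicz} against \(u\) and integrating by parts, with the boundary conventions of \cite[\S 2]{Cecchini-Zeidler:ScalarMean}, which is what you outline. Two points, however. First, one of your displayed identities has a sign error that, taken literally, breaks the argument: with \(\nu\) the \emph{interior} normal (which is also the convention you use, correctly, in the Dirac Green identity), the Green identity for the connection Laplacian is
\[
\int_M\langle u,(\modnabla^\psi)^\ast\modnabla^\psi u\rangle\dV \;=\; \int_M\lvert\modnabla^\psi u\rvert^2\dV \;+\; \int_{\partial M}\langle u,\modnabla^\psi_\nu u\rangle\dS,
\]
i.e.\ with the opposite sign to the one you wrote (your sign would be the one for the outward normal). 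With your signs the two boundary contributions combine to \(\langle u,\clm(\nu^\flat)\modDirac^\psi u\rangle-\langle u,\modnabla^\psi_\nu u\rangle\), which, since \(\clm(\nu^\flat)\modDirac^\psi=-\modnabla^\psi_\nu+\sum_{i=1}^{n-1}\clm(\nu^\flat)\clm(e^i)\modnabla^\psi_{e_i}\), leaves an uncancelled term \(-2\langle u,\modnabla^\psi_\nu u\rangle\) involving the normal derivative of \(u\), so the boundary integrand could not be expressed in boundary data and the constants would not come out as claimed. With the corrected sign one gets precisely \(\clm(\nu^\flat)\modDirac^\psi+\modnabla^\psi_\nu=\sum_{i=1}^{n-1}\clm(\nu^\flat)\clm(e^i)\modnabla^\psi_{e_i}\), a purely tangential operator, which is the cancellation your prose in fact describes.

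Second, you defer exactly the computation that distinguishes this proposition from the classical Witten boundary formula (your ``main obstacle''), so for completeness it should be carried out rather than asserted; it does close as you predict. Splitting \(\modnabla^\psi_{e_i}=\nabla_{e_i}+\tfrac12\clm(k(e_i,\blank))\epsilon_0-\tfrac{\psi}{n}\clm(e^i)\epsilon_1\), the \(\nabla\)-part of the tangential operator yields the classical term \(\tfrac{n-1}{2}\mean\lvert u\rvert^2-\langle u,\Dirac^\partial u\rangle\) in the conventions of \cite[\S 2]{Cecchini-Zeidler:ScalarMean}; for the \(\epsilon_0\)-part, your trace identity gives
\[
\sum_{i=1}^{n-1}\clm(\nu^\flat)\clm(e^i)\,\tfrac12\clm(k(e_i,\blank))\epsilon_0
=\tfrac12\clm(\nu^\flat)\bigl(-\tr(k)-\clm(\nu^\flat)\clm(k(\nu,\blank))\bigr)\epsilon_0
=\tfrac12\bigl(\clm(k(\nu,\blank))-\tr(k)\clm(\nu^\flat)\bigr)\epsilon_0,
\]
and for the \(\epsilon_1\)-part, \(-\tfrac{\psi}{n}\sum_{i=1}^{n-1}\clm(\nu^\flat)\clm(e^i)\clm(e^i)\epsilon_1=\tfrac{n-1}{n}\psi\,\clm(\nu^\flat)\epsilon_1\), which produces the factor \((n-1)/n\). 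Also note a small imprecision: the mean-curvature term does not arise from the cancellation of the normal derivatives, but from rewriting the tangential operator in terms of the intrinsic boundary Dirac operator \(\Dirac^\partial\). With the sign fixed and this bookkeeping written out, your proof is complete and coincides with the paper's.
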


For \(\chi_i \coloneqq \clm(\nu^\flat)\epsilon_i\), where \(i \in \{0,1\}\), we will use the notation \(\SobolevH^1_{\loc}(X,S; \chi_i)\) to denote the space of locally \(\SobolevH^1\)-sections on \(X\) which satisfy the boundary condition \(\chi_i u = u\) along \(\partial X\) (in the trace sense).
We will use this notation analogously for other function spaces other than \(\SobolevH^1_{\loc}\).%

\begin{rem}[Estimating the boundary term for the boundary condition \(\chi_0\)]
Suppose \(u \in \SobolevH^1_{\loc}(X,S; \chi_0)\).
    Then \(\langle u, \Dirac^\partial u \rangle = 0\) and \(\langle u, \clm(\nu^\flat) \epsilon_1 u \rangle = \langle u, \chi_1 u \rangle = 0\) along the boundary because \(\chi_0\) anti-commutes with \(\Dirac^\partial\) and \(\chi_1\).
    Thus the boundary term appearing in \cref{prop:spectral_estimate} is given by
    \begin{equation}\label{eq:boundary_spacetime_condition}
        \int_{\partial X} \frac{1}{2} \left( (n-1)\mean - \tr_{\partial X}(k) \right) \lvert u \rvert^2  \dS,
    \end{equation}
    compare~\cite[284]{Lee2019-GeometricRelativity}.
    Notably, the potential \(\psi\) completely drops out on the boundary, and thus cannot be used to dominate the other terms.
    However, \((n-1)\mean - \tr_{\partial X}(k) \geq 0\) corresponds to the boundary being weakly inner trapped in the sense of \cite[Defintion~7.28]{Lee2019-GeometricRelativity}.
\end{rem}

\begin{rem}[Estimating the boundary term for the boundary condition \(\chi_1\)]\label{rem:chi1boundary}
    Suppose \(u \in \SobolevH^1_{\loc}(X,S; \chi_1)\). Note that this corresponds to the correct boundary condition for the potential term.
    Then \(\langle u, \Dirac^\partial u \rangle = 0\) and \(\langle u, \clm(\nu^\flat) \epsilon_0 u \rangle = \langle u, \chi_0 u \rangle = 0\) along the boundary because \(\chi_1\) anti-commutes with \(\Dirac^\partial\) and \(\chi_0\). 
    Thus the boundary term appearing in \cref{prop:spectral_estimate} is given by
    \begin{equation}\label{eq:boundary_term_potential_condition}
        \int_{\partial X} \left( \tfrac{n-1}{2} \left(\mean + \tfrac{2}{n}\psi \right) \lvert u \rvert^2 + \frac{1}{2} \langle u, \clm(k(\nu, \blank)) \epsilon_0 u \rangle \right) \dS.
    \end{equation}
    Moreover, letting \(e_1, \dotsc, e_{n-1}\) be a local orthonormal frame of \(\T \partial X\), we obtain
    \begin{align*}
        \langle u, \clm(k(\nu, \blank)) \epsilon_0 u \rangle &= k(\nu, \nu) \underbrace{\langle u, \chi_0 u \rangle}_{=0} + \langle u, \clm(\sum_{i=1}^{n-1} k(\nu, e_i) e^i) \epsilon_0 u \rangle \\
        &\geq - \left(\sum_{i=1}^{n-1} k(\nu, e_i)^{2} \right)^{1/2} \lvert u \rvert^2= -\left\lvert k(\nu, \blank)|_{\T \partial X} \right\rvert \lvert u \rvert^2.
    \end{align*}
    Thus \eqref{eq:boundary_term_potential_condition} can be bounded below by%
    \[
        \int_{\partial X} \left( \tfrac{n-1}{2} \left(\mean + \tfrac{2}{n}\psi \right) - \tfrac{1}{2}\left\lvert k(\nu, \blank)|_{\T \partial X} \right\rvert \right) \lvert u \rvert^2 \dS.
        \]
\end{rem}

\section{Proof of \texorpdfstring{\cref{dec.shield.pmt,thm:spacetime_pmt_localized}}{Theorems \crtextractcref{reference}{dec.shield.pmt} and \crtextractcref{reference}{thm:spacetime_pmt_localized}}}\label{mainfirstproofs}
We use the setup and notation of \cref{sec:diracwitten_potential}.
In order to estimate  the quantity $\admEnergy_{\mathcal{E}} - \lvert \admMomentum_{\mathcal{E}} \rvert$, we make use of asymptotically constant spinors, which we will now define.
Let $(e_1,\ldots,e_n)$ be a tangent orthonormal frame in $\mathcal E$.
Note that it lifts to a section of the principal \(\Spin(n)\)-bundle over $\mathcal E$ and thus induces a trivialization of $S$ over $\mathcal E$.
We say that a spinor $u_0\in C^\infty(\mathcal E,S|_{\mathcal E})$ is constant with respect to the orthonormal frame $(e_1,\ldots,e_n)$ if it is constant with respect to this induced trivialization.
Moreover, we say that an orthonormal frame $(e_1,\ldots,e_n)$ is \emph{asymptotically constant} if there exist asymptotically flat coordinates $x^1, \ldots,  x^n$ such that $e_i=\sum_{j=1}^ne_i^j\frac{\partial}{\partial x^j}$ satisfies \(e^{j}_i - \delta_{ij} = \bigO_2(|x|^{-\tau})\), where \(\tau\) is the order of the end \(\mathcal{E}\).

\begin{defi}
We say that a spinor $u\in \SobolevH^1_{\loc}(X,S)$ is \emph{asymptotically constant in \(\mathcal{E}\)} if there exists a spinor $u_0\in C^\infty(\mathcal E,S|_\mathcal E)$ which is constant with respect to an asymptotically constant orthonormal frame and such that $u|_{\mathcal{E}} - u_0 \in \SobolevH^1_{-q}(\mathcal{E},S)$, where \(q\coloneqq (n-2)/2\).

In this case, the norm at infinity of \(u\) in \(\mathcal{E}\) is defined as \(|u|_{\mathcal{E}_\infty} \coloneqq |u_0| \in [0,\infty)\).
Moreover, for a co-vector $P = \sum_{i=1}^n P_i e^i$ at infinity in \(\mathcal{E}\), where \(P_i \in \mathbb{R}\), we define $\langle u,\cc(P)\epsilon_0 u\rangle_{\mathcal{E}_\infty}\coloneqq \langle u_0,\cc(P)\epsilon_0 u_0\rangle \in \R$.
\end{defi}

Note that an asymptotically flat orthonormal frame in $\mathcal E$ can be obtained by orthonormalizing the coordinate frame of an asymptotically flat coordinate chart.
Note also that, for an orthonormal frame $(e_1,\ldots,e_n)$ in $\mathcal E$ and a vector $P=(P_1,\ldots,P_n)$, we can choose a spinor $u_0$ which is constant which respect to this orthonormal frame and satisfies $|u_0|=1$ and $\langle u_0,\cc(P)\epsilon_0u_0\rangle=-|P|$.

\begin{lem}\label{lem:energy_momenum_estimate}
    Let \((X,g,k)\) be a complete connected asymptotically flat initial data set with compact boundary and let \(\psi \in \Cc^\infty(X, \R)\).
    Let  \(u \in \SobolevH^1_{\loc}(X,S; \chi_1)\) be a section that is asymptotically constant in each end \(\mathcal{E}\).
    Then
    \begin{multline*}
        \frac{1}{2}(n-1)\omega_{n-1}\sum_{\mathcal{E}} \left( \admEnergy_{\mathcal{E}} \lvert u \rvert_{\mathcal{E}_\infty}^2 + \langle u, \clm(\admMomentum_{\mathcal{E}}) \epsilon_0 u \rangle_{\mathcal{E}_\infty} \right) 
        + \|\modDirac^\psi u\|_{\Lp^2(X)}^2\\
        \geq \|\modnabla^\psi u\|_{\Lp^2(X)}^2 + \int_X \zeta^\psi  \lvert u \rvert^2 \dV + \int_{\partial X} \eta^\psi \lvert u \rvert^2\dS,        
    \end{multline*}
    where \((\admEnergy_\mathcal{E}, \admMomentum_\mathcal{E})\) is the ADM energy-momentum of the end \(\mathcal{E}\) and
    \begin{equation}\label{eq:theta_eta}
    \zeta^\psi = \frac{1}{2}(\mu - \lvert J \rvert) + \frac{n-1}{n}(\psi^2 - \lvert \D{\psi} \rvert) \qquad \eta^\psi = \frac{n-1}{2} \left( \mean + \tfrac{2}{n}\psi \right) - \frac{1}{2}\left\lvert k(\nu, \blank)|_{\T \partial X} \right\rvert.
    \end{equation}
\end{lem}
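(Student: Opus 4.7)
The strategy is to apply the integrated Schrödinger--Lichnerowicz identity of \cref{prop:spectral_estimate} on the truncated domain $X_r \coloneqq \{x \in X : \rho(x) \le r\}$ and pass to the limit $r \to \infty$. A standard cutoff-and-smoothing procedure preserving the $\chi_1$ boundary condition on $\partial X$ reduces matters to the case where $u$ is smooth and where $u - u_0$ has compact support in each end $\mathcal{E}$; the general case then follows by density after noting that all integrals on the right-hand side are controlled by $\|\modnabla^\psi u\|_{L^2}$ and $\|u\|_{L^2(\partial X)}$. Applying \cref{prop:spectral_estimate} on $X_r$, the bulk curvature endomorphism is bounded below by $\zeta^\psi |u|^2$ using the pointwise inequalities $|\langle u,\clm(J)\epsilon_0 u\rangle| \le |J||u|^2$ and $|\langle u,\clm(\D\psi)\epsilon_1 u\rangle|\le |\D\psi||u|^2$. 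The boundary integrand on $\partial X$ is bounded below by $\eta^\psi |u|^2$ via \cref{rem:chi1boundary}, which is applicable since $u = \chi_1 u$ on $\partial X$. This reduces the proof to identifying, for each end, the limit of the boundary integrand of \cref{prop:spectral_estimate} on the sphere $S_r^{\mathcal{E}}$ (with the interior normal of $X_r$) with $\frac12(n-1)\omega_{n-1}(\admEnergy_{\mathcal{E}}|u|_{\mathcal{E}_\infty}^2 + \langle u,\clm(\admMomentum_{\mathcal{E}})\epsilon_0 u\rangle_{\mathcal{E}_\infty})$.

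Since $\psi$ is compactly supported, for $r$ sufficiently large the Callias contribution $\tfrac{n-1}{n}\langle u,\psi\clm(\nu^\flat)\epsilon_1 u\rangle$ vanishes near $S_r^{\mathcal{E}}$, and the integrand on $S_r^\mathcal{E}$ reduces to the classical Dirac--Witten boundary expression. Writing $u|_{\mathcal{E}} = u_0 + v$ with $v\in\SobolevH^1_{-q}(\mathcal{E},S)$ in the trivialization of $S$ induced by the asymptotically constant frame, the cross and quadratic terms in $v$ vanish in the limit by weighted Hölder estimates and the decay of $v$. The surviving $u_0$--$u_0$ piece expands, using $g_{ij}-\delta_{ij} = \bigO_2(|x|^{-\tau})$ and $k_{ij} = \bigO_1(|x|^{-\tau-1})$, into a Riemannian Witten part $\frac12\int_{S_r^{\mathcal{E}}}\sum_{i,j}(\partial_i g_{ij}-\partial_j g_{ii})\nu^j|u_0|^2\,\D\bar S$ together with a $k$-contribution $\int_{S_r^{\mathcal{E}}}\sum_{i,j}(k_{ij}-\tr_g(k)g_{ij})\nu^j\langle u_0,\clm(e^i)\epsilon_0 u_0\rangle\,\D\bar S$, plus an $o(1)$ remainder. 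The first converges to $\frac12(n-1)\omega_{n-1}\admEnergy_{\mathcal{E}}|u_0|^2$ and the second to $\frac12(n-1)\omega_{n-1}\langle u_0,\clm(\admMomentum_{\mathcal{E}})\epsilon_0 u_0\rangle$ by the definitions from \cref{defs}. Summing over ends and combining with the bulk and boundary-at-$\partial X$ lower bounds yields the lemma.

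The main obstacle is the asymptotic analysis on the $S_r^{\mathcal{E}}$: although the Witten--Parker--Taubes template is classical, the present setting has two independent timelike Clifford generators $\epsilon_0, \epsilon_1$ that must be tracked carefully. One needs to verify that $\epsilon_1$-contributions from the Callias term leave no residue at infinity (this is why compact support of $\psi$ is essential), while the $\epsilon_0$-factors produced by $\frac12\clm(k(\xi,\blank))\epsilon_0$ in $\modnabla^\psi$ combine with the vector structure of the surface integrand to yield precisely the Clifford pairing with $\admMomentum_{\mathcal{E}}$. The density step to $\SobolevH^1_{\loc}$-sections is routine once one uses smooth cutoffs that vanish in a collar of $\partial X$ so as not to spoil the $\chi_1$-boundary condition and approximates $v$ in its weighted Sobolev norm.
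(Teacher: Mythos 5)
Your proposal is correct and takes essentially the same approach as the paper, whose proof is simply the combination of \cref{prop:spectral_estimate}, \cref{rem:chi1boundary}, and the asymptotic analysis of the sphere boundary integrand from the proof of Proposition~8.24 in Lee's book --- precisely the three ingredients you spell out (truncation, bulk and $\partial X$ lower bounds via $\zeta^\psi$ and $\eta^\psi$, and the Witten--Parker--Taubes identification of the limit with the ADM energy--momentum pairing). Only a minor bookkeeping remark: the $k$-contribution on the large spheres carries the factor $\tfrac12$ coming from the boundary term of \cref{prop:spectral_estimate}, which is what makes it converge to $\tfrac12(n-1)\omega_{n-1}\langle u_0,\clm(\admMomentum_{\mathcal{E}})\epsilon_0 u_0\rangle$ as you state.
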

    \begin{proof}
        Combine \cref{prop:spectral_estimate,rem:chi1boundary} and the proof of \cite[Proposition~8.24]{Lee2019-GeometricRelativity}.
    \end{proof}

    We set \(q \coloneqq (n-2)/2 > 0\) and will use \(-q < 0\) as the decay rate for the weighted Sobolev spaces we consider in the following proposition and for the remainder of this section.
    Moreover, given a potential \(\psi \in \Cc^\infty(X,\R)\) on an initial data set \((X,g,k)\), we will use the expressions \(\zeta^\psi\) and \(\eta^\psi\) as introduced in \labelcref{eq:theta_eta}, and we will write \(\zeta^\psi = \zeta_+^\psi - \zeta_-^\psi\) with \(\zeta_\pm^\psi \geq 0\).
    
    \begin{prop}\label{prop:partial_injectivity}%
        Let \((X,g,k)\) be a complete connected asymptotically flat initial data set with compact boundary \(\partial X\).
        Fix a connected codimension zero submanifold  \(X_0 \subseteq X\) with compact boundary \(\partial X_0\) which contains at least one asymptotically flat end of \(X\).
        Then there exists a constant \(c = c(X_0,g,k) > 0\), depending only on the data on \(X_0\), such that the following holds:
        
        For any potential \(\psi \in \Cc^\infty(X,\R)\) with \(\supp(\zeta^\psi_-) \subseteq X_0\) and \(\eta^\psi \geq 0\) on \(\partial X\) such that the followings holds,
        \begin{myenumi}
            \item \(\| \zeta^\psi_- \|_{\Lp^\infty_{-2}(X_0)} \leq c/2\),
            \item \(\| \psi^2 / n^2 \|_{\Lp^\infty_{-2}(X_0)} \leq c/2\),
        \end{myenumi}
        we have that
        \[
          \modDirac^\psi \colon \SobolevH^1_{-q}(X,S;\chi_1) \to \Lp^2(X,S)  
        \]
        is an isomorphism and 
        \begin{equation}
            \|\modDirac^\psi u\|_{\Lp^2(X)}^2 \geq c \| u \|^2_{\Lp^2_{-q}(X_0)} \label{eq:partial_injectivity_estimate}
        \end{equation}
        for all \(u \in \SobolevH^1_{-q}(X,S; \chi_1)\).
    \end{prop}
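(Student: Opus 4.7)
The plan is to establish the coercivity estimate~\eqref{eq:partial_injectivity_estimate} by combining \cref{lem:energy_momenum_estimate} with a weighted Poincaré inequality from \cref{weightedpoincare}, and then deduce the isomorphism from injectivity (via coercivity and unique continuation) together with the standard Fredholm theory of Dirac-type operators on asymptotically flat manifolds.

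For the coercivity, I apply \cref{lem:energy_momenum_estimate} to $u \in \SobolevH^1_{-q}(X,S;\chi_1)$. Since $u$ decays at infinity, the asymptotic value $|u|_{\mathcal{E}_\infty}$ vanishes in each end, so the ADM energy-momentum contributions drop out; the hypothesis $\eta^\psi \ge 0$ kills the boundary integral; and $\supp(\zeta^\psi_-)\subseteq X_0$ combined with hypothesis~(i) gives
\[
  \|\modDirac^\psi u\|_{\Lp^2(X)}^2 \;\ge\; \|\modnabla^\psi u\|_{\Lp^2(X)}^2 \;+\; \int_{X \setminus X_0} \zeta^\psi |u|^2 \dV \;-\; \tfrac{c}{2}\,\|u\|^2_{\Lp^2_{-q}(X_0)}.
\]
To bound $\|\modnabla^\psi u\|^2$ from below in terms of $\|\nabla u\|^2$, I would use the identity $\modnabla^\psi_\xi - \nabla_\xi = \tfrac{1}{2}\clm(k(\xi,\blank))\epsilon_0 - \tfrac{\psi}{n}\clm(\xi)\epsilon_1$ and a Cauchy--Schwarz splitting $|\modnabla^\psi u|^2 \ge (1-\epsilon)|\nabla u|^2 - (\epsilon^{-1}-1)|Au|^2$ with an optimally chosen parameter $\epsilon$, balanced so that the resulting $\psi^2$-coefficient matches the positive $\tfrac{n-1}{n}\psi^2$ already present in $\zeta^\psi$. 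This cancellation absorbs the $\psi^2$-contribution outside $X_0$ using $\zeta^\psi \ge 0$ there, and on $X_0$ via hypothesis~(ii); the $|k|^2$-term is controlled using $k=\bigO_1(\rho^{-\tau-1})$ on the ends and boundedness of $k$ on the compact part. A weighted Poincaré inequality of the form $\|v\|^2_{\Lp^2_{-q}(X)} \le C_P\|\nabla v\|^2_{\Lp^2(X)}$ from \cref{weightedpoincare} then converts the remaining $\|\nabla u\|^2$-coercivity into the desired $\|u\|^2_{\Lp^2_{-q}(X_0)}$-coercivity, yielding~\eqref{eq:partial_injectivity_estimate} with $c$ depending only on $(X_0,g,k)$.

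With~\eqref{eq:partial_injectivity_estimate} in hand, any $u \in \ker\modDirac^\psi$ must vanish on $X_0$, and unique continuation for the first-order elliptic operator $\modDirac^\psi$ on the connected manifold $X$ propagates this to $u \equiv 0$, giving injectivity. Surjectivity follows from standard Fredholm theory of Dirac-type operators on asymptotically flat manifolds (cf.~\cite[Ch.~8]{Lee2019-GeometricRelativity}): $\modDirac^\psi$ differs from the Euclidean Dirac operator on each end by a decaying perturbation, so it is Fredholm $\SobolevH^1_{-q}(X,S;\chi_1) \to \Lp^2(X,S)$ of index zero, and injectivity then upgrades to bijectivity. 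The main obstacle is the $\psi^2$-cancellation step: making the compensation between the positive $\psi^2$ in $\zeta^\psi$ and the $\psi^2$ generated by passing from $\modnabla^\psi$ to $\nabla$ work globally (rather than requiring $\psi$ to be small outside $X_0$) is exactly what makes the Callias-type modified connection $\modnabla^\psi$ from \cref{sec:diracwitten_potential} essential, and keeping track of the constants so that $c$ depends only on $(X_0,g,k)$ is the most delicate bookkeeping.
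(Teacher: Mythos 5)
Your overall skeleton (energy identity from \cref{lem:energy_momenum_estimate} with vanishing asymptotic and boundary terms, then a weighted Poincaré inequality, then injectivity plus Fredholmness) is the right family of ideas, but the way you run the coercivity step has a genuine gap: it cannot produce a constant \(c\) depending only on \((X_0,g,k)\), which is the entire point of the proposition. You pass from \(\modnabla^\psi\) to the unmodified connection \(\nabla\) \emph{globally} and then invoke a weighted Poincaré inequality on all of \(X\); the Poincaré constant \(C_P\) and the bound on \(\lvert k\rvert\) over the compact part of \(X\setminus X_0\) then enter \(c\), so \(c\) depends on \(X\), not just on \(X_0\). This destroys the later application (in \cref{lem:isomorphism_in_setup} and the proof of \cref{thm:spacetime_pmt_localized} the ambient manifold is \(X_r\), which grows with \(r\), while \(c\) must stay fixed). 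Moreover, your proposed \(\psi^2\)-cancellation outside \(X_0\) is not available from the hypotheses: outside \(X_0\) you only know \(\zeta^\psi \geq 0\), and the term \(\tfrac{n-1}{n}\psi^2\) inside \(\zeta^\psi\) may be entirely consumed against \(-\tfrac{n-1}{n}\lvert\D\psi\rvert\) and a possibly negative \(\tfrac12(\mu-\lvert J\rvert)\) (the DEC is not assumed in this proposition), so there is no spare positive multiple of \(\psi^2\) there to absorb the new negative \(\psi^2\)- and \(\lvert k\rvert^2\)-terms created by your Cauchy--Schwarz splitting. The paper avoids both problems by localizing: after discarding the nonnegative contributions \(\|\modnabla^\psi u\|^2_{\Lp^2(X\setminus X_0)}\) and \(\int_{X\setminus X_0}\zeta^\psi\lvert u\rvert^2\), it works only on \(X_0\), where \(\modnabla^\psi\) differs from \(\modnabla\) by the zeroth-order term \(\tfrac{\psi}{n}\clm(\xi)\epsilon_1\) controlled by hypothesis (ii), and applies the weighted Poincaré inequality for \(\modnabla\) on \(X_0\) (\cref{lem:weighted_poincare_modnabla} together with \cref{rem:apply_weighted_poincare}), whose constant \(c_0\) depends only on the data on \(X_0\); one then sets \(c = c_0/2\).

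On the isomorphism part, your injectivity argument via weak unique continuation from \(u\equiv 0\) on \(X_0\) is plausible but heavier than needed: the paper simply notes that if \(\modDirac^\psi u = 0\) and \(u\) vanishes on \(X_0\), the first lines of the energy estimate force \(\modnabla^\psi u = 0\) on all of \(X\), and then \cref{lem:parallel_vanish} gives \(u=0\). For surjectivity, your claim that the \(\chi_1\)-boundary-value problem is Fredholm of index zero is asserted rather than proved and is not what the paper uses; the paper invokes the argument of \cite[Theorem~2.12]{Cecchini:CalliasTypePSC} to get Fredholmness with \(\dim\coker(\modDirac^\psi)\leq\dim\ker(\modDirac^\psi)\), which together with trivial kernel yields bijectivity. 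These secondary points could likely be repaired, but the coercivity step as you set it up would have to be restructured along the paper's localized lines for the statement to hold with \(c=c(X_0,g,k)\).
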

    \begin{proof}
        By \cref{lem:weighted_poincare_modnabla,rem:apply_weighted_poincare}, there is a constant \(c_0 = c_0(X_0,g,k) > 0\) such that \(c_0 \|v\|_{\Lp^2_{-q}(X_0)}^2 \leq \| \modnabla v \|^2_{\Lp^2(X_0)} \) for all \(v \in \SobolevH_{-q}^1(X_0,S)\).
        We now set \(c = c_0 / 2\).
        Then for all \(u \in \SobolevH^1_{-q}(X,S;\chi_1)\) \cref{lem:energy_momenum_estimate} implies 
        \begin{align*}
            \left\| \modDirac^\psi u \right\|^2_{\Lp^2(X)} &\geq \left\| \modnabla^\psi u \right\|^2_{\Lp^2(X)} + \int_{X} \zeta^\psi \lvert u \rvert^2 \dV + \int_{\partial X} \underbrace{\eta^\psi \lvert u \rvert^2}_{\geq 0} \dS \\
            &\geq \left\| \modnabla^\psi u \right\|^2_{\Lp^2(X)} - \int_{X_0} \zeta^\psi_- \lvert u \rvert^2 \dV \\
            &\geq \left\| \modnabla^\psi u \right\|^2_{\Lp^2(X_0)} - \int_{X_0} \zeta^\psi_- \lvert u \rvert^2 \dV \\
            &\geq \left\| \modnabla u \right\|^2_{\Lp^2(X_0)} - \left\|\tfrac{\psi}{n} u\right\|_{\Lp^2(X_0)}^2 - \int_{X_0} \zeta^\psi_- \lvert u \rvert^2 \dV \\
            &\geq c_0 \| u \|^2_{\Lp^2_{-q}(X_0)} - \left\| \tfrac{\psi^2}{n^2} \right\|_{\Lp^\infty_{-2}(X_0)} \| u \|_{\Lp^2_{-q}(X_0)}^2 - \left\|\zeta^\psi_-\right\|_{\Lp^\infty_{-2}(X_0)} \|u \|_{\Lp^2_{-q}(X_0)}^2\\
            & \geq (c_0 - c/2 - c/2) \| u \|^2_{\Lp^2_{-q}(X_0)} = c \| u \|^2_{\Lp^2_{-q}(X_0)}. 
        \end{align*}
        So we have established \labelcref{eq:partial_injectivity_estimate}.
        A similar argument as in the proof of \cite[Theorem~2.12]{CZ2021-PositiveMass} shows that \(\modDirac^\psi\) is a Fredholm operator with $\dim\coker(\modDirac^\psi)\leq\dim\ker(\modDirac^\psi)$.
        Thus, to see that \(\modDirac^\psi\) is an isomorphism, it suffices to show that it has trivial kernel.
        Indeed, if \(u \in \SobolevH^1_{-q}(X,S;\chi_1)\) with \(\modDirac^\psi u = 0\), then \labelcref{eq:partial_injectivity_estimate} implies that \(\|u\|_{\Lp^2_{-q}(X_0)} = 0\). In particular, 
        \(u\) vanishes on \(X_0\).
        In this case, the first two lines in the previous estimate imply that \(\left\| \modnabla^\psi u \right\|_{\Lp^2(X)} = 0\) and hence \(\modnabla^\psi u = 0\) on all of \(X\).
        Since \(u\) vanishes on \(X_0\) and \(X\) is connected, this already implies that \(u = 0\) everywhere.%
    \end{proof}
    
    Note that, in general, a potential \(\psi\) satisfying all the hypotheses from \cref{prop:partial_injectivity} may not exist.
    In the proofs of our main results that follow one of the main tasks is to construct such an appropriate potential.%
    We now proceed with the proof of \cref{dec.shield.pmt}.
    
    \begin{proof}[Proof of \cref{dec.shield.pmt}]%
    Let $(M^n,g,k)$ be an initial data set of dimension \(n \geq 3\) containing an asymptotically flat end $\mathcal{E}$ and a dominant energy shield $\mathcal{E} \subset U_2\subset U_1\subset U_0$ as in \cref{dec.shield}.
    Moreover, assume that $U_0$ is spin and that $\overline{U_0 \setminus \mathcal{E}}$ is compact.
    In particular, $\mathcal E$ is the only end contained in $U_0$.
    Recall that $\bar U_0$ is a complete manifold with compact boundary.
    Let us first use Conditions (1)--(3) of \cref{dec.shield} to construct a potential $\psi\in \Ct^\infty_\cc(\bar U_0,\R)$ such that $\psi|_{U_2}=0$, $\zeta^\psi\geq0$ in $\bar U_0$, and $\eta^\psi>0$ in $\partial\bar U_0$, where $\zeta^\psi$ and $\eta^\psi$ are defined by \eqref{eq:theta_eta}.
    
    Using Condition (3) of \cref{dec.shield}, choose $d^\prime<d$ and $l^\prime<l$ such that $d^\prime<\frac{\pi}{\sqrt{\sigma} n}$, \(l^\prime < \frac{1}{\lambda(d^\prime)}\), and 
    \begin{equation}\label{eq:MeanApprox}
        \mean_{\partial\bar U_0} - \tfrac{1}{n-1}\left\lvert k(\nu, \blank)|_{\T \partial \bar U_0}\right\rvert>-\Psi(d^\prime,l^\prime)>-\infty.
    \end{equation}
    Let $V\subset U_1\setminus\overline U_2$ be a compact manifold with boundary $\partial V=\partial_-V\sqcup\partial_+V$, where $\partial_\pm V$ are separating hypersurfaces of $M$ satisfying the following properties: $\dist_g(\partial_+V,\partial_-V)>d^\prime$ and $\bar{U}_0=V_1\cup_{\partial_-V}V\cup_{\partial_+V}V_2$, where $V_1$ and $V_2$ are smooth manifolds with compact boundaries $\partial V_1=\partial_-V$ and $\partial V_2=\partial_+V\sqcup\partial\bar U_0$ respectively.
    Note that such manifold $V$ can be constructed using the regular points of a smooth function $\Ct^0$-close to the distance function from $\bar{U}_2$.
    Note also that $\mu-|J| \ge \sigma n(n-1)$ on $V$ by Condition (2) of \cref{dec.shield}.
    Since $d^\prime \sqrt\sigma n<\pi$ and since $\dist_g(\partial_+V,\partial_-V)>d^\prime$, using \cite[Lemma 3.2]{CZ2021-PositiveMass} with $\delta=d^\prime$ and $\eta=\frac{\sqrt\sigma n}{2}$, there exists a smooth function $\psi_1\colon V\to[0,\lambda(d^\prime)]$ such that $\psi_1=0$ in a neighborhood of $\partial_-V$, $\psi_1=\lambda(d^\prime)$ in a neighborhood of $\partial_+V$, and $\zeta^{\psi_1}\geq0$ in $V$.
    Let $W\subset \bar{U}_0\setminus\overline U_1$ be a compact manifold with boundary $\partial W=\partial_-W\sqcup\partial\bar U_0$, where $\partial_- W$ is a separating hypersurface of $M$ such that $\dist_g(\partial_-W,\partial\bar U_0)>l^\prime$ and $\bar{U}_0=W_1\cup_{\partial_-W}W$, where $W_1$ is a smooth manifold with compact boundary $\partial W_1=\partial_-W$.
    Note that $\mu-|J| \ge 0$ on $W$ by Condition (1) of \cref{dec.shield}.
    Since $\dist_g(\partial_-W,\partial\bar U_0)>l^\prime$ and \(l^\prime < \frac{1}{\lambda(d^\prime)}\), by \cite[Lemma 3.3]{CZ2021-PositiveMass} applied with $\delta=l^\prime$ and $\lambda=\lambda(d^\prime)$, there exists a smooth function $\psi_2\colon W\to[\lambda(d^\prime),\infty)$ such that $\psi_2=\lambda(d^\prime)$ in a neighborhood of $\partial_-W$, $\psi_2=\lambda(d^\prime)/\left(1-l^\prime\lambda(d^\prime)\right)$ in a neighborhood of $\partial\bar U_0$, and $\psi_2^2-\lvert \D{\psi} \rvert\geq0$ in $W$.
    Since $\mu-|J| \ge 0$ on $W$,
    \[
        \zeta^{\psi_2}=\frac{1}{2}(\mu - \lvert J \rvert) + \frac{n-1}{n}(\psi_2^2 - \lvert \D{\psi_2} \rvert)\ge\frac{n-1}{n}(\psi_2^2 - \lvert \D{\psi_2} \rvert)\ge0\qquad\textrm{in }W. 
    \]
    Finally, let $\psi\in \Ct^\infty_\cc(\bar U_0,\R)$ be the potential function defined by setting $\psi=0$ in $V_1$, $\psi=\psi_1$ in $V$, $\psi=\lambda(d^\prime)$ in $V_2\cap W_1$, $\psi=\psi_2$ in $W$.
    Note that $\psi|_{U_2}=0$ and $\zeta^\psi\geq0$ in $\bar U_0$.
    Moreover, \(\psi=\lambda(d^\prime)/\left(1-l^\prime\lambda(d^\prime)\right)=\frac{n}{2}\Psi(d^\prime,l^\prime)\) on $\partial\bar U_0$.
    Using \eqref{eq:MeanApprox},
    \[
    \eta^\psi = \frac{n-1}{n}\psi+\frac{n-1}{2}\left(\mean_{\partial\bar U_0} - \tfrac{1}{n-1}\left\lvert k(\nu, \blank)|_{\T \partial \bar U_0}\right\rvert\right)
    >\frac{n-1}{n}\left(\psi-\frac{n}{2}\Psi(d^\prime,l^\prime)\right)=0
    \]
    on $\partial\bar U_0$, showing that $\psi$ satisfies the desired properties.
    
    Let \(u_{00} \in \Ct^\infty(U_0,S)\) be a spinor with \(\supp(u_{00}) \subset U_2\) which is asymptotically constant in \(\mathcal{E}\) and such that
        \[
          \lvert u_{00} \rvert_{\mathcal{E}_\infty} = 1, \qquad 
          \langle u_{00}, \clm(\admMomentum_{\mathcal{E}}) \epsilon_0 u_{00} \rangle_{\mathcal{E}_\infty} = - \lvert\admMomentum_{\mathcal{E}}\rvert.
        \]
    Using \cref{prop:partial_injectivity} with \(X = U_0\) and \(X_0 = U_2\), pick a spinor \(v\in\SobolevH^1_{-q}(\bar U_0,S;\chi_1)\) such that
    \begin{equation}\label{eq:WittenSpinor}
        \modDirac^{\psi}v=-\modDirac(u_{00})=-\modDirac^{\psi}(u_{00}),
    \end{equation}
    where the last equality holds because \(\psi|_{U_2} = 0\) and \(\supp(u_{00}) \subset U_2\).%
    Then \(u \coloneqq u_{00}+v\) is an asymptotically constant spinor which is asymptotic to \(u_{00}\) in \(\mathcal{E}\) and satisfies \(\modDirac^{\psi}u=0\).
    Since $\zeta^\psi\geq0$ in $\bar U_0$, and $\eta^\psi>0$ in $\partial\bar U_0$, by \cref{lem:energy_momenum_estimate}
    \begin{equation*}
        \frac{1}{2}(n-1)\omega_{n-1} \left( \admEnergy_{\mathcal{E}} - \lvert \admMomentum_{\mathcal{E}} \rvert \right)
        \geq \|\modnabla^\psi u\|_{\Lp^2(\bar U_0)}^2 + \int_{\bar U_0} \zeta^\psi  \lvert u \rvert^2 \dV 
        +\int_{\partial\bar U_0} \eta^\psi \lvert u \rvert^2\dS>0
    \end{equation*}
    which concludes the proof.
    \end{proof}

    We now turn to the proof of \cref{thm:spacetime_pmt_localized}, the main steps of which are contained in the following lemmas. 
    We work in the following setup which is designed to prove \cref{thm:spacetime_pmt_localized} by contrapositive.
    
    \begin{setup}\label{setup:sweeping_out_M}
        Let \((M,g,k)\) be a connected initial data set and \(\mathcal{E} \subseteq M\) a fixed asymptotically flat end.
        \begin{myenumi}
            \item 
        Fix a connected codimension zero submanifold \(X_0 \subseteq M\) such that \((X_0, g)\) is complete and has a single asymptotically flat end coinciding with \(\mathcal{E}\) at infinity.
        We also fix a collar neighborhood \(K \cong \partial X_0 \times [-1,0]\) of \(\partial X_0\) inside \(X_0\) and a smooth cut-off function \(\chi \colon X_0 \to [0,1]\) such that \(\chi = 0\) on \(X_0 \setminus K\) and \(\chi = 1\) near \(\partial X_0\).\label{item:setup_X0}
        \item Now given a distance \(r > 0\), we assume that there exists another connected codimension zero submanifold \(X_0 \subset X_r \subseteq M\) such that \(\dist_g(\partial X_0, \partial X_r) > r\), \((X_r, g)\) is a complete spin manifold such that \(\mu - \lvert J \rvert \geq 0\) holds on \(X_r\), and \(X_r \setminus X_0\) is relatively compact.\label{item:setup_Xr}
        \end{myenumi}
    \end{setup}
    
    \begin{figure}[t]
        \includegraphics{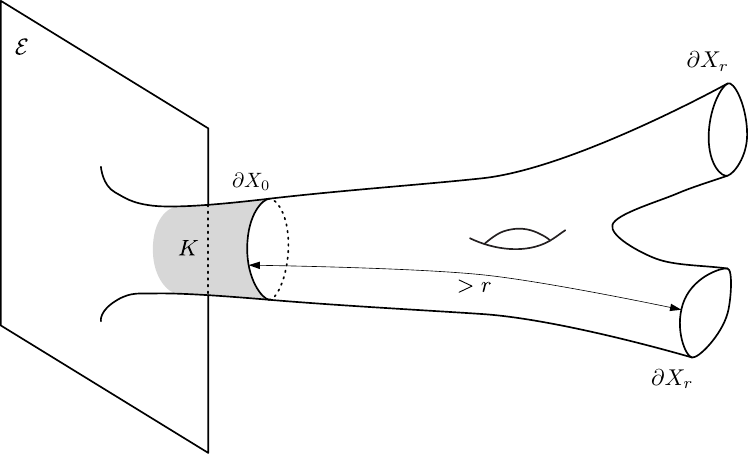}
        \caption{\cref{setup:sweeping_out_M}}
    \end{figure}
    
    \begin{lem}\label{lem:potential_exists}
        Given a distance \(r > 0\) and assuming we are in \cref{setup:sweeping_out_M}, we can find a potential \(\psi_r \in \Cc^\infty(X_r,[0,\infty))\) such that the following holds:
        \begin{alignat}{2}
            \psi_r &= \frac{\chi}{r} \quad &&\text{on \(X_0\)}, \label{eq:potential_on_end}\\
            \psi_r(x) &< \frac{4}{r}\quad && \text{if \(\dist_g(x, X_0)< \frac{r}{2} \),} \label{eq:potential_upper_bound}\\
            \psi_r^2 - \lvert \D{\psi_r}\rvert &\geq 0 \quad &&\text{on \(X_r \setminus X_0\)}, \label{eq:riccati_nonneg}\\
            \eta^{\psi_r} &\geq 0 \quad &&\text{on \(\partial X_r\)}, \label{eq:boundary_nonneg}
        \end{alignat}
        where \(\eta^{\psi_r}\) is defined as in \labelcref{eq:theta_eta}.
    \end{lem}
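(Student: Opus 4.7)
The plan is to construct \(\psi_r\) by patching the prescribed cut-off profile \(\chi/r\) on \(X_0\) with a radial extension on the shield \(X_r \setminus X_0\), where the radial variable is a smoothed distance function from \(\partial X_0\) and the profile is designed around the extremal Riccati ODE \(h' = h^2\).

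First, using that \(\overline{X_r \setminus X_0}\) is compact and \(\dist_g(\partial X_0, \partial X_r) > r\), I would produce a smooth function \(\sigma \colon X_r \setminus X_0 \to [0, \infty)\) with \(\sigma \equiv 0\) on \(\partial X_0\), \(|\nabla \sigma|_g \leq 1\) pointwise, and \(\sigma(x) > r\) for every \(x \in \partial X_r\), obtained from a standard mollification of the Lipschitz distance function \(\dist_g(\cdot, \partial X_0)\), rescaled slightly to recover the unit gradient bound. Then pick a smooth nondecreasing profile \(h \colon [0, \infty) \to (0, \infty)\) shaped as follows: \(h \equiv 1/r\) on \([0, \delta]\) for some small \(\delta > 0\); on \([\delta, r]\), \(h\) grows saturating \(h' = h^2\), so \(h(t) \approx 1/(r + \delta - t)\); and \(h \equiv A\) on \([r, \infty)\), where \(A \coloneqq 1/\delta\). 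Equivalently \(1/h\) is a smooth \(1\)-Lipschitz function on \([0, \infty)\), which is just the Riccati inequality \(h^2 \geq |h'|\); and since \(\dist_g(\partial X_0, \partial X_r) > r\), the parameter \(\delta\) may be taken arbitrarily small.

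Now set
\[
\psi_r(x) \coloneqq \begin{cases} \chi(x)/r & \text{for } x \in X_0, \\ h(\sigma(x)) & \text{for } x \in X_r \setminus X_0. \end{cases}
\]
The two branches agree on a two-sided neighborhood of \(\partial X_0\) where both identically equal \(1/r\) (using \(\chi \equiv 1\) near \(\partial X_0\) and \(h \equiv 1/r\) for small arguments), so \(\psi_r\) is smooth and compactly supported in \(X_r\). Properties \labelcref{eq:potential_on_end} and \labelcref{eq:potential_upper_bound} are immediate: for \(\dist_g(x, X_0) < r/2\) we have \(\sigma(x) < r/2 + \varepsilon\) (for an arbitrarily small mollification error \(\varepsilon\)), and \(h(r/2 + \varepsilon) \leq 1/(r/2 + \delta - \varepsilon) < 4/r\) once \(\delta\) and \(\varepsilon\) are small. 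Property \labelcref{eq:riccati_nonneg} follows from the chain rule \(|\nabla \psi_r|_g = |h'(\sigma)| \, |\nabla \sigma|_g \leq h(\sigma)^2 = \psi_r^2\). For \labelcref{eq:boundary_nonneg}, \(\sigma > r\) on \(\partial X_r\) implies \(\psi_r|_{\partial X_r} \equiv A\), and taking \(\delta\) small enough (so \(A\) large) makes \(\psi_r|_{\partial X_r}\) dominate the finite geometric quantity \(\frac{n}{n-1}\bigl(\lvert k(\nu, \blank)|_{\T \partial X_r}\rvert - \frac{n-1}{2} \mean\bigr)_+\) on the compact hypersurface \(\partial X_r\).

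The main obstacle is the three-way tension between \labelcref{eq:potential_upper_bound}, \labelcref{eq:riccati_nonneg}, and \labelcref{eq:boundary_nonneg}: the Riccati inequality forbids a smooth nonnegative potential from decaying back to zero once positive, so \(\psi_r\) must remain positive up to \(\partial X_r\); meanwhile \labelcref{eq:potential_upper_bound} restricts how rapidly it can grow in the inner half of the shield. The slack \(\dist_g(\partial X_0, \partial X_r) > r\) from \cref{setup:sweeping_out_M} is precisely what gives \(1/h\), starting from \(1/h(0) = r\), enough room to Lipschitz-shrink to an arbitrarily small positive value before \(\sigma\) reaches \(\partial X_r\), which makes \(\psi_r|_{\partial X_r}\) available as large as needed for \labelcref{eq:boundary_nonneg}.
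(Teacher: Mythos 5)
Your construction is correct and is essentially the same as the paper's: the paper also extends the prescribed $\chi/r$ across $X_r\setminus X_0$ by a smoothed, capped version of the Riccati profile $1/(r-t)$ in the distance $t=\dist_g(\partial X_0,\cdot)$ (deferring the details to \cite[Lemma~3.3]{CZ2021-PositiveMass}), exactly as you do with $h\circ\sigma$. In both arguments the slack $\dist_g(\partial X_0,\partial X_r)>r$ is what allows the capped value to be taken arbitrarily large on $\partial X_r$, which yields \labelcref{eq:boundary_nonneg} while \labelcref{eq:potential_upper_bound,eq:riccati_nonneg} follow from the $1$-Lipschitz reciprocal as you note.
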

    \begin{proof}
        Since the function \(\psi_r\) is already determined on \(X_0\) by \labelcref{eq:potential_on_end}, we only need to extend it to \(X_r\) while satisfying \labelcref{eq:potential_upper_bound,eq:riccati_nonneg,eq:boundary_nonneg}.
        We can achieve this by taking a smooth and appropriately cut-off version of the function \(x \mapsto 1/(r-t)\) with \(t = \dist_g(\partial X_0, x)\) for \(x \in X_r \setminus X_0\).
        The details of this construction were essentially worked out in \cite[Lemma~3.3]{CZ2021-PositiveMass}.
    \end{proof}
    
    \begin{lem}\label{lem:isomorphism_in_setup}
        Given \cref{setup:sweeping_out_M}, there exist constants \(c = c(X_0,g,k)\) and \(r_0 = r_0(X_0,g,k,\chi) > 0\), depending only on the data on \(X_0\), such that if \(r > r_0\) and \(\psi_r \in \Cc^\infty(X_r, [0,\infty))\) satisfies \labelcref{eq:potential_on_end,eq:riccati_nonneg,eq:boundary_nonneg}, then the following holds:
        The operator \(\modDirac^{\psi_r} \colon \SobolevH^1_{-q}(X_r, S; \chi_1) \to \Lp^2(X_r, S)\) is an isomorphism and 
        \begin{equation}
            \forall v \in \SobolevH^1_{-q}(X_r, S; \chi_1) \colon \quad \| \modDirac^{\psi_r} v \|_{\Lp^2(X_r)}^2 \geq c \| v \|^2_{\Lp^2_{-q}(X_0)}. \label{eq:applied_partial_injectivity_bound}
        \end{equation}
    \end{lem}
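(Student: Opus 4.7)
The strategy is to apply \cref{prop:partial_injectivity} directly to the complete spin initial data set \((X_r,g,k)\) with the fixed submanifold \(X_0\) and the potential \(\psi_r\). Let \(c = c(X_0,g,k) > 0\) be exactly the constant furnished by \cref{prop:partial_injectivity}; then both the bound \eqref{eq:applied_partial_injectivity_bound} and the isomorphism statement follow at once, provided that the three hypotheses on \(\psi_r\) in that proposition can be verified for all sufficiently large \(r\). Note that \(\eta^{\psi_r} \geq 0\) on \(\partial X_r\) is assumed in \labelcref{eq:boundary_nonneg}, so it remains to produce an \(r_0\) such that, for all \(r > r_0\), one has
\(\supp(\zeta^{\psi_r}_-) \subseteq X_0\), \(\|\zeta^{\psi_r}_-\|_{\Lp^\infty_{-2}(X_0)} \leq c/2\), and \(\|\psi_r^2/n^2\|_{\Lp^\infty_{-2}(X_0)} \leq c/2\).

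For the support condition, I would simply combine \labelcref{eq:riccati_nonneg} with the DEC assumption of \cref{setup:sweeping_out_M}\labelcref{item:setup_Xr}: on \(X_r\setminus X_0\) both \(\mu - |J|\) and \(\psi_r^2 - |d\psi_r|\) are nonnegative, so \(\zeta^{\psi_r} \geq 0\) there and therefore \(\zeta^{\psi_r}_-\) is supported inside \(X_0\). Moreover, since \(X_0 \subset X_r\), the DEC also holds throughout \(X_0\), so on \(X_0\) we have the pointwise estimate
\[
    \zeta^{\psi_r}_- \;\leq\; \tfrac{n-1}{n}\bigl(|\D{\psi_r}| - \psi_r^2\bigr)_+ \;\leq\; \tfrac{n-1}{n} |\D{\psi_r}|.
\]

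For the weighted bounds I would use the explicit form \(\psi_r = \chi/r\) on \(X_0\) guaranteed by \labelcref{eq:potential_on_end}. Because \(\chi\) is supported in the fixed compact collar \(K \subset X_0\), both \(\psi_r\) and \(\D{\psi_r}\) vanish on \(X_0 \setminus K\), so \(\zeta^{\psi_r}_-\) and \(\psi_r^2\) are supported in \(K\). Letting \(\bar\rho \coloneqq \sup_{K}\rho < \infty\) and \(C_\chi \coloneqq \|\D\chi\|_{\Lp^\infty}\), I then get
\[
    \|\zeta^{\psi_r}_-\|_{\Lp^\infty_{-2}(X_0)} \leq \tfrac{n-1}{n}\bar\rho^{\,2}\,\tfrac{C_\chi}{r}, \qquad \|\psi_r^2/n^2\|_{\Lp^\infty_{-2}(X_0)} \leq \tfrac{\bar\rho^{\,2}}{n^2 r^2}.
\]
Both right-hand sides depend only on \(X_0\), \(g\), \(k\), and \(\chi\), and tend to zero as \(r \to \infty\). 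Choosing \(r_0 = r_0(X_0,g,k,\chi)\) large enough that both are bounded by \(c/2\) for all \(r > r_0\) completes the verification.

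There is no real obstacle: the argument is essentially bookkeeping, and the crucial design feature \(\psi_r = O(1/r)\) on the fixed region \(X_0\), built into \cref{lem:potential_exists}, is exactly what makes the smallness hypotheses of \cref{prop:partial_injectivity} satisfiable uniformly on the \(r\)-independent piece \(X_0\), while \labelcref{eq:riccati_nonneg,eq:boundary_nonneg} take care of the growing piece \(X_r \setminus X_0\) and the boundary.
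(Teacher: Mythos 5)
Your proposal is correct and follows essentially the same route as the paper: take \(c\) from \cref{prop:partial_injectivity}, observe that the DEC on \(X_r\) together with \labelcref{eq:riccati_nonneg} and \(\psi_r=\chi/r\) forces \(\zeta^{\psi_r}_-\) to be supported in the compact collar \(K\) with \(\zeta^{\psi_r}_-\lesssim |\D\chi|/r\) there, and then use compactness of \(K\) to make the two weighted \(\Lp^\infty_{-2}\) smallness hypotheses hold for all \(r>r_0\). The only (harmless) cosmetic difference is that you keep the factor \(\tfrac{n-1}{n}\) and bound \(\rho\) by \(\sup_K\rho\), whereas the paper absorbs these into the constants.
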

    \begin{proof}
        Let \(c = c(X_0,g,k) > 0\) be the constant from \cref{prop:partial_injectivity}.
        Since \(\mu - \lvert J \rvert \geq 0\) on \(X_r\) and using \(\psi_r = 0\) on \(X_0 \setminus K\) together with \labelcref{eq:riccati_nonneg}, we obtain that
        \begin{equation}
          \zeta^{\psi_r} \coloneqq \frac{1}{2}(\mu - \lvert J \rvert) + \frac{n-1}{n}(\psi_r^2 - \lvert \D{\psi_r} \rvert) \geq \begin{cases}
            0 & \text{on \(X_r \setminus K\),} \\
            - \frac{1}{r} \lvert \D{\chi} \rvert & \text{on \(K\).}
          \end{cases}\label{eq:applied_zeta}
        \end{equation}
        Together with \labelcref{eq:potential_on_end} and since \(\chi\) is supported in the compact set \(K\), we can choose a constant \(r_0 = r_0(X_0, g, k, K,f ) > 0\) such that the conditions
        \begin{myenumi}
            \item \(\| \zeta^{\psi_r}_- \|_{\Lp^\infty_{-2}(X_0)} \leq \frac{1}{r} \|\rho^2 \D{f}\|_{\Lp^\infty(K)} \leq c/2\), \label{item:bound_zeta_minus}
            \item \(\| {\psi_r}^2 / n^2 \|_{\Lp^\infty_{-2}(X_0)} \leq \frac{1}{n^2 r^2} \| \rho^2 \|_{\Lp^\infty(K)} \leq c/2\), \label{item:bound_psi}
        \end{myenumi}
        hold for all \(r > r_0\).
        Thus \cref{prop:partial_injectivity} implies for all \(r > r_0\) that \(\modDirac^{\psi_r} \colon \SobolevH^1_{-q}(X_r, S; \chi_1) \to \Lp^2(X_r, S)\) is an isomorphism and that \labelcref{eq:applied_partial_injectivity_bound} holds.
    \end{proof}

    \begin{lem}
        Given \cref{setup:sweeping_out_M}, let \(c = c(X_0,g,k)\) and \(r_0 = r_0(X_0,g,k,\chi) > 0\) be the constants obtained from \cref{lem:isomorphism_in_setup} and let \(\psi_r \in \Cc^\infty(X_r, [0,\infty))\) satisfy \labelcref{eq:potential_on_end,eq:riccati_nonneg,eq:boundary_nonneg}.
        Then there exists a further constant \(C = C(X_0,g,k,\chi) > 0\), depending only on the data on \(X_0\), such that the following holds:
        
        For each asymptotically constant section \(u_r\) of \(S\) over \(X_r\) such that \(\modDirac^{\psi_r} u_r =0\) and
        \[
          \lvert u_r \rvert_{\mathcal{E}_\infty} = 1, \qquad 
          \langle u_r, \clm(\admMomentum_{\mathcal{E}}) \epsilon_0 u_r \rangle_{\mathcal{E}_\infty} = - \lvert\admMomentum_{\mathcal{E}}\rvert,
        \]
        we have the following estimate:
        \begin{equation}
            \frac{1}{2}(n-1)\omega_{n-1} \left( \admEnergy_{\mathcal{E}} - \lvert \admMomentum_{\mathcal{E}} \rvert \right) \geq \|\modnabla^{\psi_r} u_r\|_{\Lp^2(X_r)}^2 - \frac{C}{r} \|u_r\|^2_{\Lp^2_{-q}(K)}. \label{eq:applied_energy_momentum_bound}
        \end{equation}
    \end{lem}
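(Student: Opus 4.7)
The proof should be a direct application of the energy-momentum estimate of \cref{lem:energy_momenum_estimate} to \(X = X_r\), combined with the sign information on \(\zeta^{\psi_r}\) and \(\eta^{\psi_r}\) coming from \cref{lem:potential_exists} and the DEC assumption on \(X_r\). The plan is as follows.

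First, I would note that by \cref{setup:sweeping_out_M}\labelcref{item:setup_Xr}, \((X_r,g)\) is complete and spin, and \(\mathcal{E}\) is the only asymptotically flat end of \(X_r\) (since \(X_r \setminus X_0\) is relatively compact, and \(X_0\) has \(\mathcal{E}\) as its only end). Thus I can apply \cref{lem:energy_momenum_estimate} to \(u_r\) on \(X_r\). Using \(\modDirac^{\psi_r} u_r = 0\) and the normalization of \(u_r\) at \(\mathcal{E}\), the left-hand side becomes exactly \(\frac{1}{2}(n-1)\omega_{n-1} (\admEnergy_{\mathcal{E}} - \lvert \admMomentum_{\mathcal{E}} \rvert)\). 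Hence
\[
    \frac{1}{2}(n-1)\omega_{n-1} \left( \admEnergy_{\mathcal{E}} - \lvert \admMomentum_{\mathcal{E}} \rvert \right) \geq \|\modnabla^{\psi_r} u_r\|_{\Lp^2(X_r)}^2 + \int_{X_r} \zeta^{\psi_r}\lvert u_r \rvert^2 \dV + \int_{\partial X_r} \eta^{\psi_r} \lvert u_r \rvert^2 \dS.
\]

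Next, I would dispatch the boundary term using \labelcref{eq:boundary_nonneg}: since \(\eta^{\psi_r} \geq 0\), the boundary integral is nonnegative and may be dropped. For the bulk term, I would use that \(\mu - \lvert J \rvert \geq 0\) on \(X_r\) to obtain \(\zeta^{\psi_r} \geq \frac{n-1}{n}(\psi_r^2 - \lvert \D{\psi_r}\rvert)\). Together with \labelcref{eq:riccati_nonneg} and the fact that \(\psi_r = \chi/r\) with \(\chi\) supported in \(K\), this yields
\[
\zeta^{\psi_r} \geq \begin{cases} 0 & \text{on } X_r \setminus K, \\ -\tfrac{1}{r}\lvert \D{\chi} \rvert & \text{on } K, \end{cases}
\]
exactly as in the estimate \labelcref{eq:applied_zeta} from the proof of \cref{lem:isomorphism_in_setup}.

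Finally, I would pass from an unweighted \(\Lp^2(K)\)-integral to the weighted norm \(\Lp^2_{-q}(K)\). Since \(K\) is compact and \(\rho\) is bounded above and below by positive constants on \(K\), the weight \(\rho^{-2}\) is bounded below on \(K\), so \(\int_K \lvert u_r \rvert^2 \dV \lesssim \|u_r\|^2_{\Lp^2_{-q}(K)}\) with a constant depending only on \(X_0\) (via \(K\) and \(\rho\)). Setting \(C = C(X_0,g,k,\chi) > 0\) to absorb \(\tfrac{n-1}{n}\|\D{\chi}\|_{\Lp^\infty(K)}\) and the weight comparison constant, we obtain
\[
    \int_{X_r} \zeta^{\psi_r}\lvert u_r \rvert^2 \dV \geq -\frac{C}{r}\|u_r\|^2_{\Lp^2_{-q}(K)},
\]
which combined with the previous inequality gives exactly \labelcref{eq:applied_energy_momentum_bound}. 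There is no substantial obstacle here: the work has already been done in \cref{lem:energy_momenum_estimate,lem:potential_exists}, and the only thing to observe is the elementary weight comparison on the compact collar \(K\) that puts the error term into the weighted norm \(\|\cdot\|_{\Lp^2_{-q}(K)}\) that is controlled by \cref{lem:isomorphism_in_setup}.
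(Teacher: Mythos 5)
Your proposal is correct and follows essentially the same route as the paper: apply \cref{lem:energy_momenum_estimate} on \(X_r\), drop the nonnegative boundary term via \labelcref{eq:boundary_nonneg}, bound \(\zeta^{\psi_r}\) below by \(-\frac{1}{r}\lvert\D{\chi}\rvert\) on \(K\) using the DEC and \labelcref{eq:riccati_nonneg,eq:potential_on_end}, and convert the resulting error integral into the weighted norm on the compact set \(K\). The paper does the last step by taking \(C = \|\D{\chi}\|_{\Lp^\infty_{-2}(K)}\) directly, which is the same weight comparison you spell out.
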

    \begin{proof}
        We use \cref{lem:energy_momenum_estimate} and estimate
        \begin{align*}
        \frac{1}{2}(n-1)\omega_{n-1} \left( \admEnergy_{\mathcal{E}} - \lvert \admMomentum_{\mathcal{E}} \rvert \right) &\geq \|\modnabla^{\psi_r} u_r\|_{\Lp^2(X_r)}^2 + \int_{X_r} \zeta^{\psi_r}  \lvert u_r \rvert^2 \dV + \int_{\partial X_r} \eta^{\psi_r} \lvert u_r \rvert^2\dS, \\
        &\geq \|\modnabla^{\psi_r} u_r\|_{\Lp^2(X_r)}^2 - \int_{K} \zeta^{\psi_r}_- |u_r|^2 \dV, \quad \text{(using \labelcref{eq:applied_zeta,eq:boundary_nonneg})}\\
        & \geq \|\modnabla^{\psi_r} u_r\|_{\Lp^2(X_r)}^2 - \| \zeta^{\psi_r}_- \|_{\Lp^\infty_{-2}(K)} \|u_r\|^2_{\Lp^2_{-q}(K)}, \quad  \\
        &\geq \|\modnabla^{\psi_r} u_r\|_{\Lp^2(X_r)}^2 - \frac{1}{r} \|\D{\chi}\|_{\Lp^\infty_{-2}(K)}\ \|u_r\|^2_{\Lp^2_{-q}(K)}. \quad \text{(using \labelcref{eq:applied_zeta})}
        \end{align*}
        Thus we can choose \(C \coloneqq \|\D{\chi}\|_{\Lp^\infty_{-2}(K)}\).
    \end{proof}

    \begin{proof}[Proof of \cref{thm:spacetime_pmt_localized}]
        Let \((\mathcal{E},g,k)\) be an \(n\)-dimensional asymptotically flat initial data end.
        We fix a connected codimension zero submanifold \(X_0 \subseteq \mathcal{E}\) and a smooth cut-off function \(\chi \colon X_0 \to [0,1]\) as in \cref{setup:sweeping_out_M}~\labelcref{item:setup_X0}.
        In addition, we fix a section \(u_{00} \in \Ct^\infty(\mathcal{E},S)\) with \(\supp(u_{00}) \subset X_0 \setminus K\) which is asymptotically constant in \(\mathcal{E}\) and such that
        \[
          \lvert u_{00} \rvert_{\mathcal{E}_\infty} = 1, \qquad 
          \langle u_{00}, \clm(\admMomentum_{\mathcal{E}}) \epsilon_0 u_{00} \rangle_{\mathcal{E}_\infty} = - \lvert\admMomentum_{\mathcal{E}}\rvert.
        \]
        We now assume that \cref{setup:sweeping_out_M}~\labelcref{item:setup_Xr} can be realized for all \(r > 0\) and will prove that \(\admEnergy_{\mathcal{E}} \geq \lvert\admMomentum_{\mathcal{E}}\lvert\); this is precisely the contrapositive statement of \cref{thm:spacetime_pmt_localized}.
        
        For each \(r > 0\), choose \(\psi_r \colon X_r \to [0, \infty)\) as in \cref{lem:potential_exists}.
        Let \(c = c(X_0,g,k)\) and \(r_0 = r_0(X_0,g,k,\chi) > 0\) be the constants obtained from \cref{lem:isomorphism_in_setup}.
        Now for each \(r > r_0\), choose a section \(v_r \in \SobolevH^1_{-q}(X_r, S; \chi_1)\) such that
        \begin{equation}
          \modDirac^{\psi_r} v_r = - \modDirac(u_{00}) = - \modDirac^{\psi_r}(u_{00}),
          \label{eq:solution_condition}
        \end{equation}
        where the last equality holds because \(\psi_r\) vanishes on the support of \(u_{00}\) by construction.%
        Then \(u_r \coloneqq u_{00} + v_r\) is an asymptotically constant section which is asymptotic to \(u_{00}\) in \(\mathcal{E}\) and satisfies \(\modDirac^{\psi_r} u_r = 0\).
        Then we may estimate
        \begin{alignat*}{2}
        \frac{1}{2}(n-1)\omega_{n-1} \left( \admEnergy_{\mathcal{E}} - \lvert \admMomentum_{\mathcal{E}} \rvert \right) &\geq \|\modnabla^{\psi_r} u_r\|_{\Lp^2(X_r)}^2 - \frac{C}{r} \|u_r\|^2_{\Lp^2_{-q}(K)} \quad &&\text{(using \labelcref{eq:applied_energy_momentum_bound})} \\
        &\geq \|\modnabla^{\psi_r} u_r\|_{\Lp^2(X_r)}^2 - \frac{C}{r} \|v_r\|^2_{\Lp^2_{-q}(X_0)} \quad &&\text{(\(u_r = v_r\) on \(K\))}\\
        &\geq \|\modnabla^{\psi_r} u_r\|_{\Lp^2(X_r)}^2 - \frac{C}{r c} \|\modDirac^{\psi_r} v_r\|^2_{\Lp^2_{-q}(X_0)} \quad &&\text{(using \labelcref{eq:applied_partial_injectivity_bound})}\\
        &= \|\modnabla^{\psi_r} u_r\|_{\Lp^2(X_r)}^2 - \frac{C}{r c} \|\modDirac u_{00}\|^2_{\Lp^2_{-q}(X_0)}. \quad &&\text{(using \labelcref{eq:solution_condition})}
        \end{alignat*}
        In sum, we obtain
        \begin{equation}
            \frac{1}{2}(n-1)\omega_{n-1} \left( \admEnergy_{\mathcal{E}} - \lvert \admMomentum_{\mathcal{E}} \rvert \right) \geq \|\modnabla^{\psi_r} u_r\|_{\Lp^2(X_r)}^2 - \frac{C'}{r} \geq  - \frac{C'}{r},
            \label{eq:final_energy_momentum_estimate}
        \end{equation}
        where \(C' \coloneqq \frac{C}{c} \|\modDirac u_{00}\|^2_{\Lp^2_{-q}(X_0)}\) is a constant that only depends on objects chosen in advance on \(X_0\) independently of \(r\).
        Thus letting \(r \to \infty\) in \labelcref{eq:final_energy_momentum_estimate} proves that \(\admEnergy_{\mathcal{E}} - \lvert \admMomentum_{\mathcal{E}} \rvert \geq 0\), as desired.
    \end{proof}
    
    \begin{prop}\label{prop:parallel_spinors_exist}
        Let \((M,g,k)\) be a complete connected \(n\)-dimensional spin initial data set which contains a distinguished asymptotically flat end \(\mathcal{E}\).
        \begin{myenumi}
            \item \label{item:parallel_spinor_E_equals_P}If \(\admEnergy_{\mathcal{E}} = \lvert\admMomentum_{\mathcal{E}}\rvert\), then there exists a section \(u\) of \(S\) that satisfies \(\modnabla u = 0\) and is asymptotically constant in \(\mathcal{E}\) satisfying
            \[
              \lvert u \rvert_{\mathcal{E}_\infty} = 1, \qquad 
              \langle u, \clm(\admMomentum_{\mathcal{E}}) \epsilon_0 u \rangle_{\mathcal{E}_\infty} = - \lvert\admMomentum_{\mathcal{E}}\rvert = -\admEnergy_{\mathcal{E}}.
            \]
            \item \label{item:parallel_spinor_E_equals_0}If \(\admEnergy_{\mathcal{E}} = 0\), then for any asymptotically constant section \(u_{00}\) on \(\mathcal{E}\), one can find a section \(u\) with \(\modnabla u = 0\) that is asymptotic to \(u_{00}\).
        \end{myenumi}
    \end{prop}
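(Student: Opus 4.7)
The plan is to pass to the limit in the sequence of Witten--Callias spinors produced in the proof of \cref{thm:spacetime_pmt_localized}. First I would reduce \labelcref{item:parallel_spinor_E_equals_0} to \labelcref{item:parallel_spinor_E_equals_P}: if $\admEnergy_{\mathcal{E}} = 0$, then by \cref{neater.than.bartnikchrusciel} also $\admMomentum_{\mathcal{E}} = 0$, so \emph{any} asymptotically constant section $u_{00}$ on $\mathcal{E}$ makes the asymptotic pairing $\admEnergy_{\mathcal{E}} \lvert u_{00}\rvert_{\mathcal{E}_\infty}^2 + \langle u_{00}, \clm(\admMomentum_{\mathcal{E}}) \epsilon_0 u_{00}\rangle_{\mathcal{E}_\infty}$ vanish. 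Both parts thus reduce to the same task: given an asymptotically constant $u_{00}$ for which that asymptotic pairing vanishes, produce a section $u$ of $S$ with $\modnabla u = 0$ that is asymptotic to $u_{00}$.

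I would then exhaust $M$ by a sequence of codimension zero submanifolds $X_0 \subset X_r \subseteq M$ with $\dist_g(\partial X_0, \partial X_r) > r$, which is possible because $(M,g)$ is complete. Applying \cref{setup:sweeping_out_M}~\labelcref{item:setup_Xr} (using $\mu - \lvert J\rvert \geq 0$ on all of $M$), choose potentials $\psi_r$ via \cref{lem:potential_exists} and let $v_r \in \SobolevH^1_{-q}(X_r, S; \chi_1)$ be the unique solution of $\modDirac^{\psi_r} v_r = -\modDirac u_{00}$ given by \cref{lem:isomorphism_in_setup}; set $u_r \coloneqq u_{00} + v_r$. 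In the equality case, the left-hand side of \labelcref{eq:final_energy_momentum_estimate} vanishes, yielding
\[
\|\modnabla^{\psi_r} u_r\|_{\Lp^2(X_r)}^2 \leq \frac{C'}{r} \xrightarrow{r \to \infty} 0,
\]
while \labelcref{eq:applied_partial_injectivity_bound} gives the uniform bound $\|v_r\|_{\Lp^2_{-q}(X_0)}^2 \leq c^{-1}\|\modDirac u_{00}\|_{\Lp^2(X_0)}^2$.

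Next I would extract a limit. Combining the uniform $\Lp^2_{-q}$ bound on $v_r$ over $X_0$ with the smallness of $\modnabla^{\psi_r} u_r$, the uniform bound $\psi_r \leq 4/r$, and iterated weighted Poincaré inequalities from the appendix (exploiting $\zeta^{\psi_r} \geq 0$ outside a fixed compact set by \labelcref{eq:applied_zeta}), one obtains uniform $\SobolevH^1_{\loc}(M)$ bounds on $v_r$. Standard weak compactness and a diagonal argument then yield a subsequence with $v_r \rightharpoonup v$ in $\SobolevH^1_{\loc}(M)$. Setting $u \coloneqq u_{00} + v$, the uniform convergence $\psi_r \to 0$ on compact sets, together with $\|\modnabla^{\psi_r} u_r\|_{\Lp^2} \to 0$, forces $\modnabla u = 0$ on every compact subset, hence globally on $M$. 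The $\Lp^2_{-q}$ bound on $v_r$ in $X_0 \supset \mathcal{E}$ passes to the weak limit, so $v \in \Lp^2_{-q}(\mathcal{E})$ and $u$ is asymptotic to $u_{00}$ with the prescribed asymptotic norm and momentum pairing.

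The main obstacle I anticipate is establishing the uniform $\SobolevH^1_{\loc}(M)$ bounds on $v_r$ beyond $X_0$, since the injectivity estimate \labelcref{eq:applied_partial_injectivity_bound} only controls $v_r$ on $X_0$. The resolution is to iterate the weighted Poincaré inequalities across the exhaustion, using the nonnegativity of $\zeta^{\psi_r}$ outside a fixed compact set together with the smallness of $\psi_r$ to convert the global smallness of $\modnabla^{\psi_r} u_r$ into local $\SobolevH^1$ bounds. A secondary subtlety is identifying the asymptotic value of the weak limit: once $v \in \SobolevH^1_{-q,\loc}$ near $\mathcal{E}$ is in hand, a $\modnabla$-parallel section of $S$ is determined by its asymptotic data, and preservation of the prescribed pairing under weak convergence follows from lower semicontinuity of the relevant norms.
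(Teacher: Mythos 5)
Your proposal is correct and takes essentially the same route as the paper: exhaust \(M\) by regions \(X_r\), solve \(\modDirac^{\psi_r} v_r = -\modDirac u_{00}\), use \(\admEnergy_{\mathcal{E}} = \lvert\admMomentum_{\mathcal{E}}\rvert\) in \labelcref{eq:final_energy_momentum_estimate} to get \(\|\modnabla^{\psi_r} u_r\|^2_{\Lp^2(X_r)} \leq C'/r\), control \(v_r\) on an arbitrary fixed region for large \(r\) via the weighted Poincaré/partial-injectivity estimates together with \(\psi_r \leq 4/r\), pass to a limit, and reduce \labelcref{item:parallel_spinor_E_equals_0} to this via \cref{neater.than.bartnikchrusciel}. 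The only (harmless) deviation is the final limiting step: the paper proves \((v_r)\) is Cauchy in \(\SobolevH^1_{-q}(X,S)\) for every fixed \(X\) using \labelcref{eq:partial_injectivity_once_again} and the weighted Poincaré inequality, giving strong convergence without subsequences, whereas you extract a weak limit along a diagonal subsequence and identify \(\modnabla u = 0\) from the strong convergence \(\modnabla^{\psi_r} u_r \to 0\) — both versions rest on the same quantitative inputs.
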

    \begin{proof}
        \labelcref{item:parallel_spinor_E_equals_P} As in the proof of \cref{thm:spacetime_pmt_localized} above, we fix a connected codimension zero submanifold \(X_0 \subseteq \mathcal{E}\) and a smooth cut-off function \(\chi \colon X_0 \to [0,1]\) as in \cref{setup:sweeping_out_M}~\labelcref{item:setup_X0}.
        In addition, we fix a section \(u_{00} \in \Ct^\infty(X_0,S)\) which is asymptotically constant in \(\mathcal{E}\) and such that \(\lvert u_{00} \rvert_{\mathcal{E}_\infty} = 1\), \(\langle u_{00}, \clm(\admMomentum_{\mathcal{E}}) \epsilon_0 u_{00} \rangle_{\mathcal{E}_\infty} = - \lvert\admMomentum_{\mathcal{E}}\rvert = -\admEnergy_{\mathcal{E}}\).
        Since \(M\) is complete, spin and connected, we can find for each \(r > r_0\), a connected codimension zero submanifold \(X_0 \subset X_r \subseteq M\) which realizes \cref{setup:sweeping_out_M}~\labelcref{item:setup_Xr}.
        At this point the same argument as in the proof of \cref{thm:spacetime_pmt_localized} above applies and we obtain for each \(r > 0\) potentials \(\psi_r\) satisfying \labelcref{eq:potential_on_end,eq:riccati_nonneg,eq:boundary_nonneg,eq:potential_upper_bound} and sections \(u_r = u_{00} + v_r \in \Ct^\infty(X_r, S; \chi_1)\) (with \(v_r \in \SobolevH^{1}_{-q}(X_r, S; \chi_1)\)) that satisfy \(\modDirac^{\psi_r} u_r = 0\) and \labelcref{eq:final_energy_momentum_estimate}.
        But since \(\admEnergy_{\mathcal{E}} = \lvert\admMomentum_{\mathcal{E}}\rvert\), the estimate \labelcref{eq:final_energy_momentum_estimate} just says that
        \begin{equation}
            \|\modnabla^{\psi_r} u_r\|_{\Lp^2(X_r)}^2 \leq \frac{C'}{r}, \label{eq:connection_with_potential_to_zero}
        \end{equation}
        where \(C'\) is a constant independent of \(r\).
        Now fix an arbitrary connected codimension zero submanifold \(X_0 \subset X \subseteq M\).
        Then there exists an \(r_1 > r_0\) such that for each \(r > r_1\), we have \(X \subseteq X_r\) and \(\|\psi_r\|_{\Lp^\infty_{-1}(X)} \leq C_X/r\), where \(C_X\) is a constant depending on the data on \(X\).
        Here we have used the fact that \(\psi_r\) is compactly supported on \(X\) and \labelcref{eq:potential_upper_bound} to establish the latter estimate.
        Thus \cref{prop:partial_injectivity} implies that there exists an \(r_2 > r_1\) such that there exists a constant \(c_X > 0\), depending on the data on \(X\), such that
        \begin{equation}
            \forall r > r_2 \colon \quad \forall v \in \SobolevH^{1}_{-q}(X_r, S; \chi_1)\colon \quad \| \modDirac^{\psi_r} v \|_{\Lp^2(X_r)} \geq c_X \| v \|_{\Lp^2_{-q}(X)} \label{eq:partial_injectivity_once_again}
        \end{equation} 
        Thus we obtain for \(r,s > r_2\):%
        \begin{align*}
            \|\modnabla v_r - \modnabla v_s\|_{\Lp^2(X)} &= \|\modnabla u_r - \modnabla u_s\|_{\Lp^2(X)} \\
            &\leq \|\modnabla^{\psi_r} u_r - \modnabla^{\psi_s} u_s \|_{\Lp^2(X)} + \frac{1}{n} \left(\| \psi_r v_r\|_{\Lp^2(X)} + \|\psi_s v_s\|_{\Lp^2(X)}\right) \\
            &\leq \|\modnabla^{\psi_r} u_r\|_{\Lp^2(X)} + \|\modnabla^{\psi_s} u_s \|_{\Lp^2(X)} + \frac{C_X}{n} \left( \frac{1}{r} \|v_r\|_{\Lp^2_{-q}(X)} + \frac{1}{s} \|v_s\|_{\Lp^2_{-q}(X)}\right) \\
            &\leq \|\modnabla^{\psi_r} u_r\|_{\Lp^2(X)} + \|\modnabla^{\psi_s} u_s \|_{\Lp^2(X)} + \frac{2 C_X}{c_X n}  \| \modDirac u_{00}\|_{\Lp^2_{-q}(X_0)} \left(\frac{1}{r} + \frac{1}{s}\right),
        \end{align*}
        where in the last inequality we have used \labelcref{eq:partial_injectivity_once_again} and \(\modDirac^{\psi_r} v_r = -\modDirac(u_{00})\).
        Together with \labelcref{eq:connection_with_potential_to_zero} this proves that \(\|\modnabla v_r - \modnabla v_s\|_{\Lp^2(X)} \to 0\) as \(r,s \to \infty\).
        Finally, using the weighted Poincaré inequality for \(\modnabla\) on \(X\), this shows that \(v \coloneqq \lim_{r \to \infty} v_r\) exists in \(\SobolevH^1_{-q}(X,S)\) and \(u \coloneqq u_{00} + v\) satisfies \(\modnabla u = 0\).
        Since \(X\) was arbitrary, this already shows the existence of the desired section \(u\) on \emph{all of \(M\)}. This finishes the proof of \labelcref{item:parallel_spinor_E_equals_P}.
        
        Moreover, if \(\admEnergy_{\mathcal{E}} = 0\), then \(\admMomentum_{\mathcal{E}} = 0\) by \cref{neater.than.bartnikchrusciel} and so the same argument as above can be run starting with every asymptotically constant section \(u_{00}\) supported near the end \(\mathcal{E}\) because in this case the condition \(\langle u_{00}, \clm(\admMomentum_{\mathcal{E}}) \epsilon_0 u_{00} \rangle_{\mathcal{E}_\infty} = - \lvert\admMomentum_{\mathcal{E}}\rvert\) is always trivially satisfied.
        Thus the statement \labelcref{item:parallel_spinor_E_equals_0} also holds.
    \end{proof}

\section{Proof of \cref{rigidity.arb.ends}}\label{proof.of.rigidity}
The proof of \cref{rigidity.arb.ends} is essentially based on fundamental ideas from \citeauthor{BC96}~\cite{BC96} and \citeauthor{CM05}~\cite{MR2208148}.
The key ingredient is the following construction, the so-called \emph{Killing development}, which enables to find an ambient Lorentzian manifold that we can identify as Minkowski space.
Given a Riemannian manifold \((M,g)\) and a pair \((N,Y)\), where \(N \colon M \to \R\) is a smooth function and \(Y \in \Omega^1(M)\) a \(1\)-form, we may define
\begin{equation}
      \bar{M} \coloneqq \R \times M, \quad \bar{g} \coloneqq \proj_2^\ast(|Y|^2_g - N^2)\ \D{x_0}^2 + \proj_2^\ast Y \otimes \D{x_0} + \D{x_0} \otimes \proj_2^\ast Y + \proj_2^\ast g. \label{eq:Killing_development}
\end{equation}
We first observe that under a suitable uniformity assumption and completeness of the Riemannian manifold \(M\), the Killing development is a geodesically complete Lorentzian manifold.
\begin{lem}\label{lem:geodesically_complete}
    Let \((N,Y)\) be a scalar, \(1\)-form pair on a complete Riemannian manifold \((M,g)\) such that \(N^2 - |Y|^2_g \geq \delta\) for some \(\delta > 0\).%
    Then the \emph{Killing development}~\labelcref{eq:Killing_development} is a geodesically complete Lorentzian manifold.
\end{lem}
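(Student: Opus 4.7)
The approach is to exploit the two conserved quantities that a geodesic $\gamma$ in $(\bar M,\bar g)$ possesses: its causal ``energy'' $E\coloneqq\bar g(\dot\gamma,\dot\gamma)$, which is conserved because $\gamma$ is a geodesic, and its Killing momentum $C\coloneqq\bar g(\dot\gamma,\partial_{x_0})$, which is conserved because $\partial_{x_0}$ is a timelike Killing vector field for $\bar g$ (an immediate consequence of the fact that the coefficients in \labelcref{eq:Killing_development} are pulled back along $\proj_2$ and hence independent of $x_0$). Writing the geodesic as $\gamma(s)=(x_0(s),\sigma(s))$ with $\sigma$ a curve in $M$, I would decompose $\dot\gamma=\dot x_0\,\partial_{x_0}+\dot\sigma$ and read off from \labelcref{eq:Killing_development} that
\[
E=(|Y|_g^2-N^2)\dot x_0^2+2\dot x_0\,Y(\dot\sigma)+|\dot\sigma|_g^2,\qquad C=(|Y|_g^2-N^2)\dot x_0+Y(\dot\sigma).
\]

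The key algebraic step is to eliminate $Y(\dot\sigma)$ between these two identities. Substituting $Y(\dot\sigma)=C-(|Y|_g^2-N^2)\dot x_0$ into the expression for $E$ and simplifying yields
\[
E=(N^2-|Y|_g^2)\dot x_0^2+2C\dot x_0+|\dot\sigma|_g^2.
\]
Completing the square in $\dot x_0$, and using the standing assumption $N^2-|Y|_g^2\geq c>0$, gives the uniform bound
\[
|\dot\sigma|_g^2\leq E+\frac{C^2}{N^2-|Y|_g^2}\leq E+\frac{C^2}{c},
\]
which is a constant along $\gamma$ since $E$ and $C$ are conserved.

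Once this bound is established the rest is essentially forced. Suppose for contradiction that $\gamma$ is defined only on a maximal interval $[0,T)$ with $T<\infty$. The curve $\sigma\colon[0,T)\to M$ is Lipschitz with respect to $g$ by the uniform bound on $|\dot\sigma|_g$, so by completeness of $(M,g)$ it extends to a continuous curve on $[0,T]$ whose image is contained in some compact set $K\subseteq M$. On $K$ the quantities $N$, $|Y|_g$ are bounded, and $N^2-|Y|_g^2\geq c$, so from $\dot x_0=(Y(\dot\sigma)-C)/(N^2-|Y|_g^2)$ we obtain a uniform bound on $|\dot x_0|$ as well. Hence $x_0\colon[0,T)\to\R$ also extends continuously to $T$, and $\gamma(T)$ exists in $\bar M$; standard ODE theory then allows the geodesic to be prolonged past $T$, contradicting maximality.

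The main (mild) obstacle is simply the bookkeeping in the algebraic manipulation above, in particular tracking the sign of $|Y|_g^2-N^2$ correctly and verifying that the quadratic in $\dot x_0$ really is bounded below thanks to $N^2-|Y|_g^2\geq c>0$. Once this is done, the Killing symmetry does all of the geometric work, and no curvature or direct ODE analysis of the geodesic equations is required.
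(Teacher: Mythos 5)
Your argument is correct and is essentially the paper's own proof: both use the two conserved quantities $\bar g(\dot\gamma,\dot\gamma)$ and $\bar g(\dot\gamma,\partial_{x_0})$, eliminate $Y(\dot\sigma)$ to get a quadratic inequality controlled by $N^2-|Y|^2\geq c$, and conclude via the standard escape lemma that the velocity stays in a compact subset of the tangent bundle (the only cosmetic difference being that you bound $|\dot\sigma|_g$ first and then $|\dot x_0|$ using compactness of the image, whereas the paper bounds $|\dot x_0|$ first directly from the quadratic). The one small omission is that the lemma also asserts $\bar g$ \emph{is} a Lorentzian metric; this needs the short check the paper does with the vector $\nu=\partial_{x_0}-Y^\sharp$, which is $\bar g$-orthogonal to $\T M$ and satisfies $\bar g(\nu,\nu)=-N^2\leq -c<0$, so you should add that line before invoking $\partial_{x_0}$ as a timelike Killing field.
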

\begin{proof}
    We first show that \(\bar{g}\) is a Lorentzian metric.
    To this end, let \(\nu \coloneqq \partial_{x_0} - Y^\sharp\), where \(Y^\sharp\) is the vector field on \(M\) satisfying \(g(Y^\sharp, \xi) = Y(\xi)\) for all vector fields \(\xi\) on \(M\).
    Then \(\bar{g}(\nu, \xi) = 0\) for all vector fields \(\xi\) on \(M\) and \(\bar{g}(\nu,\nu) = \bar{g}(\partial_{x_0}, \partial_{x_0} - Y^\sharp) = |Y|^2 - N^2 - |Y|^2 = -N^2 \leq -\delta < 0\).
    Since \(\bar{g}\) restricts to the Riemannian metric \(g\) on \(M\), this shows that \(\bar{g}\) is Lorentzian.
    Moreover, by construction, translation along the \(\R\)-direction is an isometry of \(\bar{g}\).
    Thus the vector field \(\partial_{x_0}\) is a Killing vector field.
    
    To prove geodesic completeness, it suffices to show that, for every \(\bar{g}\)-geodesic \(c \colon [0,r) \to \mathcal{M}\) defined on a bounded half-open interval, there exists a compact subset \(K \subseteq \T\mathcal{M}\) of the tangent bundle such that \(c'(t) \in K\) for all \(t \in [0,r)\).
    Because then, by standard ODE theory, the geodesic \(c\) can be extended beyond the interval \([0,r)\) and so there exists no geodesic which stops in finite time.
    
    To this end, if we are given such a geodesic \(c \colon [0,r) \to \mathcal{M}\), we decompose it as \(c(t) = (\alpha(t), \gamma(t))\) with \(\alpha(t) \in \R\) and \(\gamma(t) \in M\).
    Then we can write \(c'(t) = \alpha'(t) \oplus \gamma'(t)\) with \(\alpha'(t) \in  \T_{\alpha(t)} \R =\R\) and \(\gamma'(t) \in \T_{\gamma(t)} M\).
    Since \(c\) is a geodesic, there exists a constant \(k_1 \in \R\) such that
    \begin{equation}
        k_1 = \bar{g}(c'(t), c'(t)) = (|Y|^2_g - N^2) \alpha'(t)^2 + 2 \alpha'(t) Y(\gamma'(t)) + g(\gamma'(t), \gamma'(t)) \label{eq:geodesic_constant_speed}
    \end{equation}
    for all \(t \in [0, r)\).
    Moreover, since \(\partial_{x_0}\) is a Killing field, there exists a constant \(k_2 \in \R\) such that
    \begin{equation}
        k_2 = \bar{g}(\partial_{x_0}, c'(t)) = (|Y|^2_g - N^2) \alpha'(t) + Y(\gamma'(t))
        \label{eq:killing_component_constant}
    \end{equation}
    for all \(t \in [0,r)\).
    Combining \labelcref{eq:geodesic_constant_speed,eq:killing_component_constant}, we derive that
    \begin{equation}
        0 \leq g(\gamma'(t), \gamma'(t)) =  k_1 - (N^2 - |Y|^2_g) \alpha'(t)^2 - 2 k_2 \alpha'(t) \leq k_1 - \delta \alpha'(t)^2 - 2 k_2 \alpha'(t)
        \label{eq:completness_main_estimate}
    \end{equation}
    for all \(t \in [0,r)\).
    Since the right-hand side tends to \(-\infty\) as \(|\alpha'(t)| \to \infty\), it follows that \(a = \sup_{t \in [0,r)} |\alpha'(t)| < \infty\).
    But then \(b = \sup_{t \in [0,r)} |\gamma'(t)|_g \leq \sqrt{|k_1| + 2 a |k_2|} < \infty\).
    It follows that \(\alpha(t) \in K_1 \coloneqq [\alpha(0) - ar, \alpha(0) + ar]\) and \(\gamma(t) \in K_2 \coloneqq \overline{\Ball}_{br}(\gamma(0))\) for all \(t \in [0,r)\). 
    Since \((M,g)\) is a complete Riemannian manifold, the set \(K_2\) is compact.
    Thus we conclude that \(c'(t) \in K\) for all \(t \in [0,r)\), where \(K \subset \T \mathcal{M}\) is the compact subset defined as
    \[
        K \coloneqq \{ (\eta_s, \xi_x) \in \T\R \times \T M = \T \mathcal{M} \mid s \in K_1, x \in K_2, |\eta_s| \leq a, |\xi_x|_g \leq b\}.
        \qedhere
    \]
\end{proof}

Given a suitable section \(u\) of a spinor bundle \(S \to M\), where \((M,g)\) is a spin manifold and \(S\) is as in \cref{sec:diracwitten_potential}, we will define
\begin{equation}\label{defining.pair}
    N= \langle u,u \rangle\:\:\textnormal{and}\:\: Y(\xi)=\langle \clm(\xi^\flat) \epsilon_0 u,u \rangle
\end{equation}
and consider the corresponding Killing development~\labelcref{eq:Killing_development}.
It turns out that if the spinor \(u\) is \(\modnabla\)-parallel, then \(u\) extends to a parallel spinor on the spacetime \((\bar{M} = \R \times M, \bar{g})\) and the initial data set \((M,g,k)\) embeds into \((\bar{M}, \bar{g})\) as \(\{x_0\} \times M\) for arbitrary \(x_0 \in \R\) (in particular, this means that the second fundamental form of \(\{x_0\} \times M\) inside \(\bar{M}\) with respect to \(\bar{g}\) agrees with \(k\)); see \cite[Theorem~4.5]{ammann2021dominant}.

\begin{lem}
    Suppose that \(\modnabla u = 0\) and let \((N,Y)\) be as in \labelcref{defining.pair}.
    Then \(\D (N^2 - |Y|^2) = 0\).\label{lem:vanishing_derivative}
\end{lem}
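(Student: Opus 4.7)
The strategy is a direct pointwise computation exploiting $\modnabla u = 0$, which is equivalent to
\[
    \nabla_\xi u = -\tfrac{1}{2}\clm(k(\xi,\blank))\epsilon_0 u
\]
for every tangent vector $\xi$. The plan is to compute $\xi(N^2)$ and $\xi(|Y|^2)$ separately and verify that both evaluate to $-2N\, k(\xi, Y^\sharp)$, so that their difference vanishes.

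For $\xi(N^2)$: apply metric compatibility of $\nabla$ to $\xi(N) = 2\operatorname{Re}\langle \nabla_\xi u, u\rangle$ and substitute to reduce it to a real scalar multiple of $\langle \clm(k(\xi,\blank))\epsilon_0 u, u\rangle$. Using that $\clm(\alpha)\epsilon_0$ is Hermitian for every real $1$-form $\alpha$ (which is precisely what makes $Y$ real-valued), this quantity is recognised as $-Y(k(\xi,\blank)^\sharp) = -k(\xi, Y^\sharp)$, whence $\xi(N^2) = -2N\, k(\xi, Y^\sharp)$.

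For $\xi(|Y|^2)$: pick a tangent vector $\eta$ extended so that $\nabla\eta$ vanishes at the base point and differentiate $Y(\eta) = \langle \clm(\eta^\flat)\epsilon_0 u, u\rangle$, substituting the parallelism identity into both slots. Simplify the resulting two spinorial terms using the anticommutation of $\epsilon_0$ with spatial Clifford multiplication together with $\epsilon_0^2 = 1$ (which collapses $\epsilon_0 \clm(\alpha)\epsilon_0 = -\clm(\alpha)$), the Hermiticity of $\clm(\alpha)\epsilon_0$ (to transfer the derivative from the right slot to the left), and the Clifford anticommutator $\{\clm(\eta^\flat), \clm(k(\xi,\blank))\} = -2\, k(\xi,\eta)$. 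The two terms then add up (rather than cancel) and yield $\xi(Y(\eta)) = -N\, k(\xi,\eta)$. Specialising $\eta = Y^\sharp$ and using $\xi(|Y|^2) = 2(\nabla_\xi Y)(Y^\sharp)$ gives $\xi(|Y|^2) = -2N\, k(\xi, Y^\sharp)$, matching $\xi(N^2)$ exactly.

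The main technical point is the Clifford-algebraic bookkeeping in the second step: the sign conventions of the $\Cl_{n,2}$-action and of the Hermitian structure on $S$ must cooperate so that the two contributions to $\xi(Y(\eta))$ combine (rather than cancel) into a single scalar multiple of $N$. Once the identity $\xi(Y(\eta)) = -N\, k(\xi,\eta)$ is in hand, the match $\xi(N^2) = \xi(|Y|^2)$ follows from the symmetry of $k$, and the lemma is proved.
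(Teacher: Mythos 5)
Your proposal is correct and follows essentially the same route as the paper: the central computation is the identity \((\modnabla u = 0) \Rightarrow \nabla_\xi Y = -N\,k(\xi,\blank)\), obtained from the anticommutation of \(\epsilon_0\) with spatial Clifford multiplication, \(\epsilon_0^2 = 1\), and the Hermiticity of \(\clm(\alpha)\epsilon_0\), after which both \(\xi(N^2)\) and \(\xi(|Y|^2)\) reduce to \(-2N\,k(\xi,Y^\sharp)\) and cancel. The paper merely packages the final step as a single computation of \(\nabla_\xi(N^2-|Y|^2)\) rather than matching the two derivatives separately, which is only a cosmetic difference.
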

\begin{proof}
    Note that \(\modnabla u = 0\) means that \(\nabla_\xi u = - \frac{1}{2} \clm(k(\xi,\blank)) \epsilon_0 u\) for all vector fields \(\xi\) by definition of \(\modnabla\).
    Then%
    \begin{align*}
        (\nabla_\xi Y)(\eta) &= \nabla_\xi (Y(\eta)) - Y(\nabla_\xi \eta) \\
        &= \nabla_\xi \langle \clm(\eta^\flat) \epsilon_0 u, u \rangle - \langle \clm(\nabla_\xi \eta^\flat) \epsilon_0 u, u \rangle \\
        &= -\frac{1}{2} \left( \langle \clm(\eta^\flat) \epsilon_0 \clm(k(\xi, \blank)) \epsilon_0 u, u\rangle + \langle \clm(\eta^\flat) \epsilon_0 u, \clm(k(\xi, \blank)) \epsilon_0 u \rangle\right) \\
        &= \frac{1}{2} \langle (\clm(\eta^\flat) \clm(k(\xi, \blank)) + \clm(k(\xi, \blank)) \clm(\eta^\flat) )u, u\rangle \\
        &= - k(\xi, \eta) \langle u, u \rangle.
    \end{align*}
    Thus \(\nabla_\xi Y = -k(\xi, \blank) N\) and so
    \begin{align*}
        \nabla_\xi (N^2 - |Y|^2) &= 4 N \langle \nabla_\xi u, u \rangle - 2 \langle \nabla_\xi Y, Y \rangle \\
        &= -2N \langle \clm(k(\xi, \blank)) \epsilon_0 u, u \rangle + 2 N \langle k(\xi, \blank), Y\rangle \\
        &= -2N \langle \clm(k(\xi, \blank)) \epsilon_0 u, u \rangle + 2 N \langle \clm(k(\xi, \blank))\epsilon_0 u, u \rangle = 0. \qedhere
    \end{align*}
\end{proof}

We may now start the proof of \labelcref{rigidity.arb.end:item1} in \cref{rigidity.arb.ends}, that is, the implication that $E=0$ implies the existence of an embedding of $(M,g,k)$ into Minkowski space.

\begin{proof}[Proof of \cref{rigidity.arb.ends}~\labelcref{rigidity.arb.end:item1}]
    Let \(u_{00} \in \Ct^\infty(\mathcal{E}, S)\) be an asymptotically constant section with
    \begin{equation}
        \lvert u_{00} \rvert_{\mathcal{E}_\infty} = 1, \qquad \forall \xi \in \R^n \colon \langle \clm(\xi^\flat) \epsilon_0 u_{00}, u_{00} \rangle_{\mathcal{E}_\infty} = 0. \label{eq:rigidity_spinor_asymptotic_conditions}
    \end{equation}
    By \cref{prop:parallel_spinors_exist}~\labelcref{item:parallel_spinor_E_equals_0}, there exists a $\modnabla$-parallel spinor $u$ asymptotic to $u_{00}$ on \(\mathcal{E}\) in the sense that $u - u_{00} \in \SobolevH^1_{-q}$ on the end $\mathcal{E}$. 
    We then define \((N,Y)\) by \labelcref{defining.pair} and consider the corresponding Killing development \labelcref{eq:Killing_development}.
    Then \labelcref{eq:rigidity_spinor_asymptotic_conditions} implies that \(N \to 1\) and \(Y \to 0\) as we go to infinity in the end \(\mathcal{E}\).
    By \cref{lem:vanishing_derivative}, this means that \(N^2 - |Y|^2 = 1\) everywhere and \cref{lem:geodesically_complete} implies that the Killing development \(\bar{M}\) is a geodesically complete Lorentzian manifold.
    
    Furthermore, we may directly apply \cite[Theorem~4.5]{ammann2021dominant} to see that the Lorentzian manifold \(\bar{M}\) contains the initial data set \((M,g,k)\) as \(\{x_0\} \times M\) for each \(x_0 \in \R\) and that the Killing vector field \(\partial_{x_0}\) is parallel on \(\bar{M}\).
    
    Next we will prove that \(\bar{M}\) is already flat.
    Indeed, by \cref{prop:parallel_spinors_exist}~\labelcref{item:parallel_spinor_E_equals_0}, we actually obtain that there exists a family \((u_{\alpha})\) of \(\modnabla\)-parallel sections which forms a global frame of the bundle \(S\).
    Then, as is described in the proof of \cite[Theorem~4.5]{ammann2021dominant}, the spinor bundle \(S\) can be canonically extended to a spinor bundle \(\bar{S} \to \bar{M}\) corresponding to the Lorentzian metric \(\bar{g}\).
    Moreover, restriction of sections to \(M \times \{0\}\) yields an isomorphism,
    \[
      \{\bar{u} \in \Ct^\infty(\bar{M}, \bar{S}) \mid \bar{\nabla}_{\partial_{x_0}} \bar{u} = 0 \} \xrightarrow{\cong} \Ct^\infty(M, S).
    \]
    In particular, we find unique extensions \(\bar{u}_\alpha \in \Ct^\infty(\bar{M}, \bar{S})\) of \(u_{\alpha} \in \Ct^\infty(M, S)\) such that \(\bar{\nabla}_{\partial_{x_0}} \bar{u}_\alpha = 0\).
    By construction, the Lorentzian spinor connection on \(\bar{S}\) restricts to \(\modnabla\) on \(S\), and hence \((\bar{u}_\alpha)_\alpha\) actually form a basis of parallel sections of \(\bar{S}\).
    This means that \(\bar{S}\) is flat and so the underlying Lorentzian manifold \((\bar{M}, \bar{g})\) must already be flat.
    
    Then a standard classification result~\cite[Theorem~2.4.9]{Wolf} implies that the universal cover of \((\bar{M}, \bar{g})\) is isometric to Minkowski space.
    Finally, we claim that \(\bar{M}\) is simply-connected and hence \(\bar{M}\) is itself isometric to Minkowski space.
    To show that \(\bar{M} = \R \times M\) is simply-connected, it is enough to show that \(M\) itself is simply-connected.
    Indeed, assume by contradiction that \(M\) is not simply-connected.
    Then the universal covering of \(M\) would have more than one end since it would contain multiple disjoint copies of the end \(\mathcal{E}\) because \(\mathcal{E}\) is a simply-connected open subset of \(M\).
    However, the universal covering of \(M\) is contractible because it is homotopy equivalent to the universal covering of \(\bar{M} = \R \times M\) which is diffeomorphic to \(\R^{n+1}\).
    But a contractible manifold of dimension \(> 1\) has necessarily only one end,\footnote{This is an elementary consequence of Poincaré duality: If a contractible manifold \(X\) had more than one end, then there exists an increasing exhaustion \((K_i)_i\) by compact subsets such that \(\varinjlim_i \tilde{\mathrm{H}}^0(X \setminus K_i) \neq 0\). 
    But then \(0 \neq \varinjlim_i \tilde{{\mathrm{H}}}^0(X \setminus K_i) \cong \varinjlim_i \mathrm{H}^1(X, X \setminus K_i) = \mathrm{H}^1_{\mathrm{c}}(M) \overset{\mathrm{PD}}{\cong} \mathrm{H}_{n-1}(M) = 0\), a contradiction, where contractibility is used in the first isomorphism and in the final equality (together with \(n > 1\)), and Poincaré duality between homology and compactly supported cohomology is used in the last isomorphism.} so this yields a contradiction.%
    
    Thus $(M,g,k)$ embeds as desired in Minkowski space \((\bar{M},\bar{g})\).
\end{proof}

We now turn to part~\labelcref{rigidity.arb.end:item2} of \cref{rigidity.arb.ends} ($E_\mathcal{E}=|P_\mathcal{E}|\Rightarrow E_\mathcal{E}=0$), which is essentially a repeat of \cite{CM05}.
The key part is the following computational lemma we take from \cite{CM05}. 
\begin{lem}[{\cite[Theorem~2.5]{CM05}}]\label{rigidity.comp}
Let $R>0$ and $(g,k)$ be initial data on $\mathbb{R}^n\backslash \Disk_R(0)$ satisfying 
\begin{equation}
    g_{ij}-\delta_{ij}=\bigO_{3+\lambda}(|x|^{-\alpha})\:\:,\:\: k_{ij}=\bigO_{2+\lambda}(|x|^{-1-\alpha})
\end{equation}
\begin{equation}
    \alpha>\  \begin{cases}
    \quad \frac{1}{2}&  \quad n=3\\
   \quad n-3  & \quad n\geq 4
\end{cases} \quad ,\quad \epsilon>0\quad,\quad 0<\lambda<1
\end{equation}
\begin{equation}\label{decay.on.mu}
J^i=\bigO_{1+\lambda}(|x|^{-n-\epsilon})\:\:,\:\: \mu=\bigO_{1+\lambda}(|x|^{-n-\epsilon})    
\end{equation}
Let $N$ be a scalar field and $Y^i$ a vector field on $\mathbb{R}^n\backslash B(R)$ such that 
\begin{equation*}
    N\to_{r\to\infty} A^0\quad,\quad Y^i\to_{r\to\infty} A^i\quad,\quad (A^0)^2=|A|^2
\end{equation*}
for some constants $A^\mu\neq 0$. Suppose moreover that 
\begin{equation}\label{killing.condition}
2Nk_{ij}+\mathcal{L}_Yg_{ij}=\bigO_{3+\lambda}(|x|^{-(n-1)-\epsilon})
\end{equation}
\begin{equation}\label{decay.on.tau}
    \tau_{ij}=\bigO_{1+\lambda}(|x|^{-n-\epsilon}).
\end{equation}
Then $(\admEnergy,\admMomentum)=(0,0)$. 
\end{lem}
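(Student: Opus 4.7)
The result is Theorem 2.5 of~\cite{CM05}, and the plan is to follow that proof essentially verbatim, checking that the dimension-three argument carries over to $n \geq 3$ with only cosmetic changes. The central ingredient is the Chruściel--Maerten mass-momentum integral identity associated to an approximate Killing initial data (KID) pair $(N,Y)$.

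First I would construct a vector field $W = W[N,Y,g,k]$ on the end, polynomial in $N,Y,g,k$ and their first derivatives, such that
\begin{equation*}
\Div W = \mu N + J^i Y_i + \tau^{ij}\bigl(2N k_{ij} + (\mathcal L_Y g)_{ij}\bigr) + Q,
\end{equation*}
where $Q$ is quadratic in the asymptotic deficits $g-\delta$, $N-A^0$, $Y-A$, and such that
\begin{equation*}
\lim_{R' \to \infty} \int_{\Sphere^{n-1}_{R'}} \langle W, \nu\rangle\,\D{}\bar S \;=\; (n-1)\omega_{n-1}\bigl(A^0 \admEnergy_{\mathcal E} - A^i (\admMomentum_{\mathcal E})_i\bigr).
\end{equation*}
Stokes' theorem on $\{R \leq |x| \leq R'\}$ then expresses $A^0 \admEnergy_{\mathcal E} - A^i (\admMomentum_{\mathcal E})_i$ as an absolutely convergent bulk integral plus a boundary contribution at $\{|x|=R\}$.

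The decay hypotheses are exactly those required to make each term of the bulk integral integrable: the matter contribution is $\bigO(|x|^{-n-\epsilon})$ by \eqref{decay.on.mu}; the $\tau$-paired KID deficit is controlled by \eqref{killing.condition}--\eqref{decay.on.tau} as $\bigO(|x|^{-(n-1)-\epsilon})\cdot\bigO(|x|^{-n-\epsilon})$; and the quadratic remainder $Q$ is schematically $\bigO(|x|^{-2\alpha-2})$, whose integrability against the volume element $|x|^{n-1}\,\D{}|x|$ is precisely what forces the sharp threshold $\alpha > n-3$ (respectively $\alpha > 1/2$ for $n=3$) built into the hypothesis. Passing to the limit $R'\to\infty$ thus yields a finite identity for the null contraction $A^\mu P_\mu$.

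Combining this identity with the spin positive-energy inequality $\admEnergy_{\mathcal E} \geq |\admMomentum_{\mathcal E}|$ (\cref{IDS.pmt}) and the null condition $(A^0)^2 = |A|^2$ forces $\admEnergy_{\mathcal E} = |\admMomentum_{\mathcal E}|$ with $\admMomentum_{\mathcal E}$ parallel to the spatial part of $A$. Re-running the identity with the inner radius $R$ replaced by a sequence $R_j \to \infty$ and using the decay of $N-A^0$ and $Y-A$ to show that the inner boundary contribution tends to zero then forces both $\admEnergy_{\mathcal E}$ and $|\admMomentum_{\mathcal E}|$ to vanish. The main obstacle, as in \cite{CM05}, is the explicit production of the divergence expression $\Div W$ together with the correct identification of its boundary limit with the ADM charges; this is a routine but careful constraint-equation computation, with nothing specific to $n = 3$ entering besides the replacement of $4\pi$ by $(n-1)\omega_{n-1}$.
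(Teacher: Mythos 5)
Your sketch does not establish the lemma as stated, and it also diverges from what the paper actually does: the paper does not reprove this result at all, but quotes it from \cite[Theorem~2.5]{CM05} (with \cite{BC96} for $n=3$), and the point stressed in the accompanying remark is that the cited proof uses \emph{only} asymptotic analysis on the end $\mathcal{E}$, which is precisely why it remains valid when other arbitrary ends are present. The first concrete gap in your argument is the appeal to the spin positive-energy inequality $\admEnergy \geq \lvert\admMomentum\rvert$ (\cref{IDS.pmt}): the hypotheses of the lemma concern data on $\mathbb{R}^n\setminus\Disk_R(0)$ only, with no completeness, no spin structure, and no dominant energy condition on any ambient manifold, so that inequality is simply not available; importing it both proves a weaker statement than the one quoted and forfeits the end-local character that the paper relies on in the arbitrary-ends setting.

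The second gap is the endgame. Your divergence identity can at best control the single null contraction $A^0\admEnergy - A^i(\admMomentum)_i$, and not even that usefully: the bulk integrand has no sign (the $\tau$-paired KID deficit and the quadratic remainder $Q$ are indefinite, and $\mu N + J^iY_i$ has a sign only under a DEC together with $N\geq \lvert Y\rvert_g$, neither of which is assumed), so Stokes' theorem yields an identity, not vanishing. Moreover, the proposed limit with inner radii $R_j\to\infty$ is self-contradictory: by your own normalization the flux $\int_{\Sphere^{n-1}_{R_j}}\langle W,\nu\rangle\,\D{}\bar S$ converges to $(n-1)\omega_{n-1}\bigl(A^0\admEnergy - A^i(\admMomentum)_i\bigr)$, i.e.\ to the charge itself, not to zero, so no conclusion $\admEnergy=\lvert\admMomentum\rvert=0$ follows. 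The genuine proof in \cite{BC96,CM05} is of a different kind: a bootstrap of the (approximate) KID equations producing refined asymptotic expansions of $N-A^0$, $Y-A$ and of $(g,k)$ — this is where the thresholds $\alpha>\tfrac12$, $\alpha>n-3$ and the regularity orders $3+\lambda$, $2+\lambda$ enter — from which the ADM integrals are computed directly to vanish. A self-contained proof would have to reproduce that asymptotic analysis rather than a charge-identity-plus-positivity argument; otherwise the honest route is to do what the paper does and cite the result, noting that its proof is purely asymptotic on $\mathcal{E}$.
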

\begin{rem}
$\mathcal{L}$ denotes the Lie derivative and $\tau_{ij}$ is defined as in Proposition 2.1 of \cite{CM05}
\begin{equation}
\tau_{ij}-\frac{1}{2}g^{kl}\tau_{kl}g_{ij}=\Ric_{ij}+g^{ml}k_{lm}k_{ij} -2k_{ik}k_{lj}g^{lk} - N^{-1}(\mathcal{L}_Yk_{ij}+\nabla_i\nabla_j N)-\frac{\mu}{2}g_{ij}.
\end{equation}
\end{rem}
\begin{rem}
The proof of Lemma \ref{rigidity.comp}, which is contained in \cite{BC96,CM05}, solely involves asymptotic analysis on $\mathcal{E}$ and applies unchanged in our setting, where possibly other arbitrary but unrelated ends are present.
\end{rem}

\begin{proof}[Proof of \cref{rigidity.arb.ends}~\labelcref{rigidity.arb.end:item2}]
Assume by contradiction that \(\admMomentum_{\mathcal{E}} \neq 0\).
By \cref{prop:parallel_spinors_exist}, we have a section $u$ of \(S\) satisfying 
\begin{equation}
    |u|_{\mathrm{E}_\infty} = 1, \qquad \langle u, \clm(\admMomentum_{\mathcal{E}}) \epsilon_0 u \rangle_{\mathcal{E}_\infty} = - \lvert\admMomentum_{\mathcal{E}}\rvert = -\admEnergy_{\mathcal{E}},\label{eq:rigidity_spinor_extra_decay_asymptotic_conditions}
\end{equation}
and $\modnabla u=0$.
Again, we use it to define a scalar, $1$-form pair $(N,Y)$ as in \eqref{defining.pair}.
By \labelcref{eq:rigidity_spinor_extra_decay_asymptotic_conditions} and the fact that we have assumed \(\admMomentum_{\mathcal{E}} \neq 0\), we obtain that both \(N \to 1\) and \(|Y|^2 \to 1\) as we go to infinity in \(\mathcal{E}\).
Using \cref{lem:vanishing_derivative}, we thus obtain \(N^2 = |Y|^2\).
 
Then, by the computation from line (3.18) to (3.28) in \cite{CM05}, we arrive at
\begin{equation}\label{asymp.approach}
N^2\tau_{ij}= \mu Y_i Y_j 
\end{equation}
 where $\tau_{ij}$ is as above.
 This can be taken verbatim from \cite{CM05} because it only involves $\modnabla$. \eqref{asymp.approach} and the decay assumption on $\mu$ in Definition \ref{extra.decay} imply that $\tau_{ij}$ satisfies the decay assumption in \eqref{decay.on.tau}.

Moreover, the computation from line (3.15) to (3.27) in \cite{CM05} shows that the assumption on $J^i$ in \eqref{decay.on.mu} is satisfied. Finally, the computation between line (3.27) and (3.28) in \cite{CM05} shows that \eqref{killing.condition} in Lemma \ref{rigidity.comp} is satisfied. Therefore, we can apply Lemma \ref{rigidity.comp} to obtain that $(\admEnergy_\mathcal{E},\admMomentum_\mathcal{E})=0$, a contradiction.
\end{proof}

\appendix

\section{Weighted Poincaré inequalities in the presence of boundary}\label{weightedpoincare}
In this appendix we discuss weighted Poincaré inequalities adapted to our setting, that is, on asymptotically flat manifolds with compact boundary for sections of Hermitian vector bundles with respect to not-necessarily metric connections.
This is essentially folklore~(compare\ \cite{BC05,Bartnik:MassAsymptoticallyFlat}).

\begin{lem}\label{lem:weighted_poincare_at_infinity}
    Let \((\R^n \setminus \Disk_{d}(0),g)\) be an asymptotically flat end of some decay rate \(\tau > (n-2)/2\).
    Then for each \(\delta < 0\), there exists a constant \(C = C(g,d, \delta) > 0\) and \(r_0 = r_0(g,d,\delta) > d\) such that the following holds: For each \(r \geq r_0\) and \(u \in \SobolevH^1_{\delta}(\R^n \setminus \Ball_{r}(0))\), we have
    \[
        \| u \|_{\Lp^2_{\delta}(\R^n \setminus \Ball_r(0))} \leq C \| |\nabla u|_g \|_{\Lp^2_{\delta-1}(\R^n \setminus \Ball_r(0))},
        \]
    where the weighted \(\Lp^2\)-norms are calculated with respect to the weight function \(\rho = |x|\) and the volume measure induced by the Riemannian metric \(g\).
\end{lem}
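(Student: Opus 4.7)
The plan is to prove the inequality first for the Euclidean metric on $\R^n\setminus\Ball_r(0)$ and then transfer it to the asymptotically flat $g$ via a perturbation argument, using that $g_{ij} - \delta_{ij} = \bigO_2(|x|^{-\tau})$.

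For the Euclidean case, pass to polar coordinates $x = s\theta$ with $s > r$ and $\theta \in \Sphere^{n-1}$. Since $dx = s^{n-1}\,ds\,d\theta$ and $|\nabla u|_{\mathrm{Eucl}}^2 \geq |\partial_s u|^2$, setting $p \coloneqq -2\delta - 1$ (which is strictly greater than $-1$ precisely because $\delta < 0$), the weighted norms unfold as
\begin{equation*}
  \|u\|_{\Lp^2_\delta}^2 = \int_{\Sphere^{n-1}}\!\!\int_r^\infty |u|^2 s^p \,ds\,d\theta,\qquad \||\nabla u|_{\mathrm{Eucl}}\|_{\Lp^2_{\delta-1}}^2 \geq \int_{\Sphere^{n-1}}\!\!\int_r^\infty |\partial_s u|^2 s^{p+2}\,ds\,d\theta.
\end{equation*}
By Fubini it therefore suffices to establish, for a.e.\ $\theta$, the one-dimensional Hardy-type inequality
\begin{equation*}
  \int_r^\infty |v(s)|^2 s^p \,ds \leq \frac{4}{(p+1)^2} \int_r^\infty |v'(s)|^2 s^{p+2}\,ds
\end{equation*}
for $v \coloneqq u(\cdot, \theta)$, with \emph{no} boundary condition imposed at $s = r$.

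For this one-dimensional estimate, the idea is to write $s^p = \tfrac{d}{ds}\bigl(\tfrac{s^{p+1} - r^{p+1}}{p+1}\bigr)$ and integrate by parts on $(r, R)$. The boundary term at $s = r$ vanishes by the choice of antiderivative, so no Dirichlet condition at the inner sphere is needed. Integrability of $|v|^2 s^p$ together with $p+1>0$ forces $\liminf_{s\to\infty} |v(s)|^2 s^{p+1} = 0$ (otherwise $|v|^2 s^p \gtrsim s^{-1}$ on a tail, contradicting integrability), so the upper boundary term vanishes along a suitable sequence $R_n\to\infty$. Cauchy--Schwarz, together with the pointwise bound $(s^{p+1} - r^{p+1})^2 s^{-(p+2)} \leq s^p$ valid for $s > r$, then absorbs the remaining cross term and yields the sharp constant $4/(p+1)^2$.

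Finally, to pass from the Euclidean case to the general $g$, note that $g_{ij} - \delta_{ij} = \bigO_2(|x|^{-\tau})$ implies that both the $g$-volume element and the inverse metric $g^{ij}$, and hence $|\nabla u|_g^2 = g^{ij}\partial_i u\,\partial_j u$, differ from their Euclidean counterparts by factors of the form $1 + \bigO(|x|^{-\tau})$. Picking $r_0 = r_0(g,d,\delta) \geq d$ large enough that these errors are uniformly bounded by $1/2$ on $\R^n\setminus\Ball_{r_0}(0)$ renders the $g$-weighted norms comparable to the Euclidean ones on that tail, and the Euclidean inequality transfers with a new constant $C = C(g,d,\delta)$. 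The only genuinely delicate point is the absence of a boundary condition at $s = r$ in the Hardy step; this is precisely what is handled by picking the antiderivative vanishing at $s = r$, combined with the \enquote{good radii} trick at infinity.
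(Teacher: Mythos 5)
Your proof is correct, but it takes a genuinely different route from the paper. The paper follows Bartnik/Lee (\cite[Proposition~A.28]{Lee2019-GeometricRelativity}): it integrates \(\langle \nabla^g(\rho^{2-n}), \nabla^g(\rho^{-2\delta}|u|^2)\rangle\) by parts, uses that the inner boundary term has a favorable sign (\(\nabla^g_\nu(\rho^{2-n})\geq 0\) because \(2-n<0\) and \(\nu\) points toward the deleted ball), and needs \(r_0\) large so that the error \(\nabla^\ast\nabla^g(\rho^{2-n})=\bigO(|x|^{-\tau-n})\), coming from \(\rho^{2-n}\) being only asymptotically harmonic for \(g\), is dominated by the positive term \(2\delta(2-n)|\nabla^g\rho|^2\rho^{-2\delta-n}\). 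You instead prove a sharp one-dimensional Hardy inequality on each ray with exponent \(p=-2\delta-1>-1\), dispose of the inner boundary term by choosing the antiderivative \((s^{p+1}-r^{p+1})/(p+1)\) vanishing at \(s=r\) (the analogue of the paper's sign argument), handle infinity by the \(\liminf\) trick, and then transfer from the Euclidean metric to \(g\) by uniform comparability of \(g^{ij}\), \(|\nabla u|_g\) and \(\dV_g\) with their Euclidean counterparts for \(|x|\geq r_0\); the chain \(X\leq o(1)+\tfrac{2}{p+1}X^{1/2}Y^{1/2}\) with \(X<\infty\) indeed yields the constant \(4/(p+1)^2\), and \((s^{p+1}-r^{p+1})^2 s^{-(p+2)}\leq s^p\) is correct. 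What your approach buys: an explicit, essentially sharp constant, validity of the Euclidean inequality for every \(r\geq d\) (largeness of \(r_0\) is needed only for metric comparability, not for an asymptotic-harmonicity estimate), and independence from Lee's proposition; what the paper's approach buys is that it works intrinsically with \(g\) without polar coordinates and slots directly into the weighted elliptic framework it cites. The only points you should make explicit in a written version are the routine ones: either reduce to smooth functions of bounded support by density in \(\SobolevH^1_\delta\) (as the paper does) or invoke the a.e.-ray absolute continuity of \(\SobolevH^1_{\mathrm{loc}}\)-functions so that the one-dimensional integration by parts and Fubini are justified, and note that for complex-valued \(u\) one uses \(|\partial_s|v|^2|\leq 2|v||\partial_s v|\); none of these affects the argument.
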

\begin{proof}
    It is enough to consider compactly supported smooth functions \(u \in \Cc^\infty(\R^n \setminus \Ball_r(0)) \).
    We essentially follow the proof given by Lee in \cite[Proposition~A.28]{Lee2019-GeometricRelativity} (which, in turn, follows Bartnik~\cite{Bartnik:MassAsymptoticallyFlat}) except that we need to treat an additional boundary term in our setting.
    The first part of the proof of \cite[Proposition~A.28]{Lee2019-GeometricRelativity} is unaffected by the presence of a boundary and we obtain for each \(r > d\) the estimate
    \begin{align}
      &\int_{\R^{n} \setminus \Ball_r(0)} \langle \nabla^g(\rho^{2-n}), \nabla^g (\rho^{-2 \delta} |u|^2) \rangle \dV_g  \nonumber \\
      &\leq \int_{\R^n \setminus \Ball_r(0)} - 2 \delta (2-n) |\nabla^g \rho|^2 \rho^{-2\delta -n} |u|^2 \dV_g + C' \|u\|_{\Lp^2_{\delta}} \cdot \| \nabla^g u \|_{\Lp^2_{\delta-1}} \label{eq:weighted_poincare_estimate1},
    \end{align}
    where \(C'\) is a constant that only depends on the metric \(g\).
    Using partial integration, we also obtain the identity
    \begin{align*}
        &\int_{\R^{n} \setminus \Ball_r(0)} \langle \nabla^g(\rho^{2-n}), \nabla^g (\rho^{-2 \delta} |u|^2) \rangle \dV_g  \\
    &= \int_{\R^n \setminus \Ball_r(0)} \nabla^\ast \nabla^g(\rho^{2-n}) \rho^{-2\delta} |u|^2 \dV_g + \int_{\Sphere^{n-1}_r} \nabla^g_\nu(\rho^{2-n}) \rho^{-2 \delta} |u|^2 \dS,
    \end{align*}
    where \(\nu\) denotes outer unit normal with respect to the metric \(g\) along \(\partial (\R^n \setminus \Ball_r(0)) \), that is, pointing towards the deleted interior disk \(\Ball_r(0)\).
    Since \(2-n < 0\), we have \(\nabla^g_\nu (\rho^{2-n}) \geq 0\) along \(\Sphere_r^{n-1}\), and thus we deduce 
    \begin{equation}
        \int_{\R^{n} \setminus \Ball_r(0)} \langle \nabla^g(\rho^{2-n}), \nabla^g (\rho^{-2 \delta} |u|^2) \rangle \dV_g \geq \int_{\R^n \setminus \Ball_r(0)} \nabla^\ast \nabla^g(\rho^{2-n}) \rho^{-2\delta} |u|^2 \dV_g \label{eq:weighted_poincare_estimate2}
    \end{equation}
    Chaining together the estimates \labelcref{eq:weighted_poincare_estimate1,eq:weighted_poincare_estimate2} an rearranging terms, we arrive at the estimate
    
    \begin{equation}
        \int_{\R^n \setminus \Ball_r(0)} \left( \nabla^\ast \nabla^g(\rho^{2-n}) \rho^{-2\delta} |u|^2 + 2 \delta (2-n) |\nabla^g \rho|^2 \rho^{-2 \delta -n } |u|^2 \right) \dV_g
        \leq C' \|u\|_{\Lp^2_\delta} \cdot \| \nabla^g u \|_{\Lp^2_{\delta-1}}
        \label{eq:poincare_penultimate_step}
    \end{equation}
    Since \(\rho^{2-n}\) is harmonic with respect to the Euclidean background metric, we obtain that
    \[
        \nabla^\ast \nabla^g (\rho^{2-n}) = -\Delta_g(\rho^{2-n}) = \Delta_{g_{\mathrm{eukl}}}(\rho^{2-n}) - \Delta_g(\rho^{2-n}) \in \bigO(|x|^{-\tau-n}).
    \]
    In particular, if \(r \geq r_0\) for \(r_0 = r_0(g, d, \delta)\) sufficiently large, then we can ensure that
    \begin{equation}
        \nabla^\ast \nabla^g(\rho^{2-n}) \rho^{-2\delta} |u|^2 + 2 \delta (2-n) |\nabla^g \rho|^2 \rho^{-2 \delta -n } |u|^2  \geq C'' |u|^2 \rho^{-2 \delta -n}
        \label{eq:use_asymptoticaly_harmonic}
    \end{equation}
    on all of \(\R^n \setminus \Ball_r(0)\) for a constant \(C'' = C''(g, \delta, r_0) >0\).
    Here we have used that \(\delta < 0\).
    Then \labelcref{eq:poincare_penultimate_step,eq:use_asymptoticaly_harmonic} show that for all \(r \geq r_0\), we have
    \[
        C'' \|u\|_{\Lp^2_{\delta}(\R^n \setminus \Ball_r(0))}^2 \leq C' \|u\|_{\Lp^2_\delta(\R^n \setminus \Ball_r(0))} \cdot \| \nabla^g u \|_{\Lp^2_{\delta-1}(\R^n \setminus \Ball_r(0))}.
    \]
    This proves the desired estimate with \(C = C' / C''\).
\end{proof}

\begin{lem}\label{lem:parallel_vanish}
    Let \((X,g)\) be a complete connected asymptotically flat manifold with compact interior boundary \(\partial X\).
    Let \(E \to X\) be a Hermitian vector bundle endowed with a metric connection \(\nabla\).
    Let \(\delta < 0\) and \(A\) be a smooth \(1\)-form on \(X\) with values in the endomorphisms of \(E\) such that \(\lvert A \rvert \in \bigO(\rho^{\delta-1})\) on each asymptotically flat end of \(X\).
    Define a new connection \(\nabla^A = \nabla + A\) on \(E\).
    Then each \(u \in \SobolevH^1_{\delta}(X,E)\) which satisfies \(\nabla^A u = 0\) must vanish.
\end{lem}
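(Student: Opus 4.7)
The plan is a dichotomy: either $u$ vanishes at some point, forcing $u \equiv 0$ by ODE uniqueness, or $u$ is nowhere zero, in which case the decay of $A$ at infinity prevents $|u|$ from decaying fast enough to lie in $\Lp^2_\delta$.

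First, $\nabla^A u = 0$ is a first-order linear system with smooth coefficients, so by elliptic bootstrap the section $u$ is smooth. If $u(p_0) = 0$ for some $p_0 \in X$, then along any smooth curve $\gamma$ starting at $p_0$ the equation reduces to the linear ODE $\nabla_{\gamma'} u + A(\gamma')u = 0$ with vanishing initial condition, and uniqueness forces $u$ to vanish along $\gamma$. Since $X$ is connected, $u \equiv 0$, and we are done.

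Assume now that $u$ is nowhere vanishing. Using that $\nabla$ is metric and $\nabla u = -Au$, for any tangent vector $\xi$ we have
\[
    \xi(|u|^2) = 2\operatorname{Re}\langle \nabla_\xi u, u \rangle = -2\operatorname{Re}\langle A(\xi)u, u\rangle,
\]
which yields the pointwise bound $|d\log|u|^2| \leq 2|A|$. Fix any $p_0 \in X$ with $a \coloneqq |u(p_0)|^2 > 0$, and for each point $q$ in an asymptotically flat end $\mathcal{E}$ with $\rho(q) = R$ large, connect $p_0$ to $q$ by a piecewise smooth curve $\gamma$ consisting of a bounded inner segment together with a radial segment in $\mathcal{E}$. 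Since $|A| \in \bigO(\rho^{\delta-1})$ with $\delta - 1 < -1$, the integral $\int_{R_0}^R r^{\delta-1}\, dr$ is bounded by $R_0^\delta/|\delta|$ uniformly in $R$, and therefore $\int_\gamma |A|\, ds \leq C$ independently of $q$. Integrating $|d\log|u|^2| \leq 2|A|$ along $\gamma$ yields the uniform lower bound $|u(q)|^2 \geq a e^{-2C} =: c > 0$ on $\{\rho > R_0\} \cap \mathcal{E}$.

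This lower bound is incompatible with $u \in \Lp^2_\delta(X, E)$ for $\delta < 0$:
\[
    \|u\|^2_{\Lp^2_\delta(\mathcal{E})} \geq c\int_{\{\rho > R_0\} \cap \mathcal{E}} \rho^{-2\delta - n}\, \dV \gtrsim c\int_{R_0}^\infty r^{-2\delta - 1}\, dr = \infty,
\]
since $-2\delta - 1 > -1$ when $\delta < 0$. This contradiction forces $u \equiv 0$. The crux is the uniform bound on $\int_\gamma |A|\, ds$ along paths reaching infinity, which requires exactly the strict integrability afforded by $\delta - 1 < -1$; the compact boundary $\partial X$ plays no role, as the entire argument operates through the asymptotic ends.
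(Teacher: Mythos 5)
Your proof is correct, and the main step is genuinely different from the paper's. You share the first reduction (ODE uniqueness along curves on the connected manifold, so either $u\equiv 0$ or $u$ is nowhere zero), but where the paper derives a contradiction analytically — applying the exterior weighted Poincar\'e inequality (\cref{lem:weighted_poincare_at_infinity}) to $|u|$ together with Kato's inequality, so that $\tfrac1C\|u\|_{\Lp^2_\delta(\R^n\setminus \Ball_r)}\le \|A\|_{\Lp^\infty_{-1}(\R^n\setminus\Ball_r)}\|u\|_{\Lp^2_\delta(\R^n\setminus\Ball_r)}$ and the right-hand constant tends to $0$ as $r\to\infty$ — you argue pointwise: the identity $\xi|u|^2=-2\operatorname{Re}\langle A(\xi)u,u\rangle$ gives a Gronwall-type bound $\bigl|\D\log|u|^2\bigr|\le 2|A|$, and since $\rho^{\delta-1}$ is integrable along radial rays you get a uniform positive lower bound for $|u|$ near infinity, which is incompatible with $u\in\Lp^2_\delta$ because the weighted volume $\int\rho^{-2\delta-n}\dV$ of an exterior region diverges for $\delta<0$. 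Your route is more elementary and self-contained (no Poincar\'e inequality, no Kato), uses only integrability of $|A|$ along rays plus boundedness on compacta, and even yields the stronger intermediate statement that a nowhere-vanishing $\nabla^A$-parallel section is uniformly bounded below on each end; the paper's route is natural in context because the weighted Poincar\'e machinery is needed anyway for \cref{lem:weighted_poincare_modnabla}. Two small points you should make explicit: the inner segments of your paths can all be chosen inside one fixed compact set with uniformly bounded length (so the contribution of $\int_\gamma|A|\,\D{s}$ there is bounded independently of the target point), and the $g$-speed of the radial segments is uniformly comparable to the Euclidean one by asymptotic flatness; both are routine.
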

\begin{proof}
    Assume to the contrary that \(u \neq 0\) and \(\nabla^A u = 0\).
    Then \(u\) is smooth and it must be non-zero at each point because \(X\) is connected and \(u\) satisfies a linear ODE along each smooth curve in \(X\).
    Now select an asymptotically flat end \(\R^n \setminus \Disk_d(0) \cong \mathcal{E} \subseteq X\) and choose \(r_0\) and \(C\) as in \cref{lem:weighted_poincare_at_infinity} above.
    Note that since \(\nabla\) is a metric connection \(E\), we have Kato's inequality \(\lvert \nabla \lvert u \rvert \rvert \leq \lvert \nabla u \rvert\).
    We thus find for each \(r \geq r_0\) that
    \begin{align*}
      \frac{1}{C} \|u\|_{\Lp^2_{\delta}(\R^n \setminus \Ball_r(0))} &\leq \| \nabla |u|\|_{\Lp^2_{\delta-1}(\R^n \setminus \Ball_r(0))} \\
      &\leq \| \nabla u\|_{\Lp^2_{\delta-1}(\R^n \setminus \Ball_r(0))} = \| A u \|_{\Lp^2_{\delta-1}(\R^n \setminus \Ball_r(0))} \leq \| A \|_{\Lp_{-1}^\infty(\R^n \setminus \Ball_r(0))}\ \| u \|_{\Lp^2_{\delta}(\R^n \setminus \Ball_r(0))}.
    \end{align*}
    Since \(u\) vanishes nowhere, this implies that \(0 < 1/C \leq \| A \|_{\Lp_{-1}^\infty(\R^n \setminus \Ball_r(0))}\).
    But since \(|A| \in \bigO(\rho^{\delta-1})\) and \(\delta - 1 < -1\), we have that \(\| A \|_{\Lp_{-1}^\infty(\R^n \setminus \Ball_r(0))} \to 0\) as \(r \to \infty\), a contradiction.
    \end{proof}

\begin{prop}[Weighted Poincaré inequality]\label{lem:weighted_poincare_modnabla}
    Let \((X,g)\) be a complete connected asymptotically flat manifold with compact interior boundary \(\partial X\).
    Let \(E \to X\) be a Hermitian vector bundle endowed with a metric connection \(\nabla\).
    Let \(\delta < 0\) and \(A\) be a smooth \(1\)-form on \(X\) with values in the endomorphisms of \(E\) such that \(\lvert A \rvert \in \bigO(\rho^{\delta-1})\) on each asymptotically flat end of \(X\).
    Then there exists a constant \(C = C(X,g,E,\delta,A)\) such that for all \(u \in \SobolevH^1_{\delta}(X,E)\) we have
    \[
        \| u \|_{\Lp^2_{\delta}(X)} \leq C \| \nabla^A u \|_{\Lp^2_{\delta - 1}(X)}.
    \]
\end{prop}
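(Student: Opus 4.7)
The plan is to argue by contradiction: if the inequality fails, one can find a sequence $(u_n) \subset \SobolevH^1_\delta(X,E)$ with $\|u_n\|_{\Lp^2_\delta(X)} = 1$ but $\|\nabla^A u_n\|_{\Lp^2_{\delta-1}(X)} \to 0$, and the goal will be to derive $u_n \to 0$ in $\Lp^2_\delta(X)$.

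First I would handle each asymptotically flat end $\mathcal{E}_i$. By Kato's inequality $\lvert\nabla\lvert u_n\rvert\rvert \leq \lvert\nabla u_n\rvert$, so \cref{lem:weighted_poincare_at_infinity} applied to the scalar function $\lvert u_n\rvert$ gives, for $r$ sufficiently large, an estimate $\|u_n\|_{\Lp^2_\delta(\mathcal{E}_i \cap \{\rho > r\})} \leq C_1 \|\nabla u_n\|_{\Lp^2_{\delta-1}(\mathcal{E}_i \cap \{\rho > r\})}$. Writing $\nabla u_n = \nabla^A u_n - A u_n$ and exploiting both $\delta < 0$ and the decay $\lvert A\rvert \leq C(A)\rho^{\delta-1}$, I would verify the absorption bound $\|A u_n\|_{\Lp^2_{\delta-1}(\mathcal{E}_i \cap \{\rho > r\})} \leq C(A)\, r^\delta\, \|u_n\|_{\Lp^2_\delta(\mathcal{E}_i \cap \{\rho > r\})}$; this step uses that $\rho^{2\delta} \leq r^{2\delta}$ on $\{\rho > r\}$ because $\delta < 0$. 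Choosing $r$ large enough that $C_1 C(A) r^\delta \leq \tfrac{1}{2}$ and absorbing yields $\|u_n\|_{\Lp^2_\delta(\mathcal{E}_i \cap \{\rho > r\})} \leq 2 C_1 \|\nabla^A u_n\|_{\Lp^2_{\delta-1}(X)} \to 0$.

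Next I would run a compactness argument on the complement $K = X \setminus \bigcup_i (\mathcal{E}_i \cap \{\rho > r\})$, which has compact closure. On $K$ the weight $\rho$ is bounded above and below and $A$ is bounded, so $(u_n)$ is bounded in the ordinary $\SobolevH^1(K,E)$; combined with the end bounds, it is bounded in $\SobolevH^1_\delta(X,E)$. Passing to a subsequence, $u_n \rightharpoonup u$ weakly in $\SobolevH^1_\delta(X,E)$ and $u_n \to u$ strongly in $\Lp^2_\loc(X,E)$ by Rellich--Kondrachov. Since $\nabla^A u_n \to 0$ in $\Lp^2_{\delta-1}$, the limit satisfies $\nabla^A u = 0$ distributionally, and the decay hypothesis on $A$ is exactly what \cref{lem:parallel_vanish} requires, forcing $u \equiv 0$. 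Thus $u_n \to 0$ in $\Lp^2_\delta(K)$, which combined with the end estimate gives $\|u_n\|_{\Lp^2_\delta(X)} \to 0$, contradicting $\|u_n\|_{\Lp^2_\delta(X)} = 1$.

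The main obstacle is the interplay between the two regions: the radius $r$ must be chosen large enough for the absorption on the ends to close, and separately Rellich must be applied on the compact piece. The delicate point is that $\SobolevH^1_\delta$ does not embed compactly into $\Lp^2_\delta$ over all of $X$, so without the at-infinity Poincaré estimate of \cref{lem:weighted_poincare_at_infinity} mass could a priori escape to infinity and the weak limit would not capture the full $\Lp^2_\delta$-norm.
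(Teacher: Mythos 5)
Your proof is correct and follows essentially the same route as the paper: a contradiction sequence, the at-infinity estimate of \cref{lem:weighted_poincare_at_infinity} (via Kato's inequality) to control the ends, a compactness argument to extract a limit, and \cref{lem:parallel_vanish} to rule out a nonzero $\nabla^A$-parallel limit. The only minor difference is that your explicit absorption of the $Au$-term on the tails (using $\delta<0$ and the decay of $A$) lets you get by with the classical Rellich theorem on the compact core, whereas the paper invokes the global weighted Rellich embedding $\SobolevH^1_{\delta}\hookrightarrow\Lp^2_{0}$ and a Cauchy-sequence argument.
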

\begin{proof}
    The estimate 
    \begin{equation}
        \| \nabla u \|_{\Lp^2_{\delta-1}(X)} \leq \| \nabla^A u \|_{\Lp^2_{\delta-1}(X)} + \| A u \|_{\Lp^2_{\delta-1}(X)} \leq \| \nabla^A u \|_{\Lp^2_{\delta-1}(X)} + \|A\|_{\Lp^\infty_{\delta-1}(X)}\ \|u\|_{\Lp^2_{0}(X)}
        \label{eq:estimate_nabla_by_modnabla}
    \end{equation}
    holds for all \(u \in \SobolevH^1_{\delta}(X,E)\).
    Assume to the contrary that such a constant \(C > 0\) does not exist.
    Then there exists a sequence \(u_i \in \SobolevH^1_{\delta}(X,E)\) such that \(\|u_i\|_{\Lp^2_{\delta}(X)} = 1\) and \(\|\nabla^A u_i \|_{\Lp^2_{\delta-1}(X)} \to 0\).
    Then because of \labelcref{eq:estimate_nabla_by_modnabla} the sequence \((u_i)\) is uniformly bounded in \(\SobolevH^1_{\delta}(X,E)\). 
    By the weighted Rellich theorem\footnote{see e.g.~\cite[Lemma~2.1]{Choquet-Bruhat_Christodoulou:EllipticSystems} which immediately extends to our setting with boundary.}, we can pass to a subsequence and assume that \(u_i \to u\) in \(\Lp^2_0(X,E)\).
    But then \labelcref{eq:estimate_nabla_by_modnabla} implies that \(\|\nabla (u_i - u_j)\|_{\Lp^2_{\delta-1}(X)} \to 0\) as \(i,j \to \infty\).
    This, in turn, via \cref{lem:weighted_poincare_at_infinity} and using the fact that the sequence \((u_i)\) converges locally in \(\Lp^2\) (as a consequence of \(\Lp^2_0\)-convergence) implies that \(\|u_i - u_j\|_{\Lp^2_{\delta}(X)} \to 0\) as \(i,j \to \infty\).
    We thus conclude that the sequence \((u_i)\) converges in \(\SobolevH^1_{\delta}(X,E)\) and so \(u_i \to u\) in \(\SobolevH^1_{\delta}(X,E)\).
    Hence \(\|u\|_{\Lp^2_{\delta}(X)} = 1\) and \(\nabla^A u = 0\), a contradiction to \cref{lem:parallel_vanish}.
\end{proof}

\begin{rem}\label{rem:apply_weighted_poincare}
    In this paper, we use the weighted Poincaré inequality for the case \(\delta = -q = -(n-2)/2\), in which case it simplifies to \(\| u \|_{\Lp^2_{-q}} \leq C \| \nabla u \|_{\Lp^2}\).
    Moreover, we are primarily interested in applying it to the following setup:
    
    Let \((X,g,k)\) be a complete asymptotically flat initial data set, where \((X,g)\) is endowed with a spin structure, and let \(S \to X\) be the variant of the spinor bundle as introduced in \cref{sec:diracwitten_potential}.
    Then both the modified connection \(\modnabla_\xi = \nabla_\xi + 1/2 \clm(k(\xi, \blank)) \epsilon_0\) and \(\modnabla^\psi_\xi = \modnabla_\xi - \frac{\psi}{n} \clm(\xi) \epsilon_1\) with \(\psi \in \Cc^\infty(X)\) can be written in the form \(\nabla + A\)  for suitable \(1\)-forms \(A\) satisfying \(|A| \in \bigO(\rho^{-q-1})\).
    Thus we obtain weighted Poincaré inequalities for both of these modified connections from \cref{lem:weighted_poincare_modnabla}.
\end{rem}

\printbibliography

\end{document}